\theoremstyle{plain}
\newtheorem{theorem}{Theorem}[section]
\newtheorem*{mtheorem}{Main Theorem}
\newtheorem{proposition}[theorem]{Proposition}
\newtheorem{lemma}[theorem]{Lemma}
\newtheorem{corollary}[theorem]{Corollary}
\theoremstyle{definition}
\newtheorem{example}[theorem]{Example}
\newtheorem{definition}[theorem]{Definition}
\theoremstyle{remark}
\newtheorem{remark}[theorem]{Remark}
\newtheorem{specialremark}{Remark}
\tikzstyle{startstop} = [rectangle, rounded corners, minimum width=3cm, minimum height=1cm,text centered, draw=black, fill=red!30]
\tikzstyle{io} = [trapezium, rounded corners, trapezium left angle=70, trapezium right angle=110, minimum width=3cm, minimum height=1cm, text centered, draw=black, fill=blue!30]
\tikzstyle{process} = [rectangle, rounded corners, minimum width=3cm, minimum height=1cm, text centered, draw=black, fill=orange!30]
\tikzstyle{decision} = [rectangle, rounded corners, minimum width=3cm, minimum height=1cm, text centered, draw=black, fill=green!30]
\tikzstyle{arrow} = [thick,->,>=stealth]
\newcommand\res{\mathrm{res}}
\DeclareMathOperator{\Th}{Th}
\DeclareMathOperator{\tp}{tp}
\newcommand{\Lring}{\mathcal{L}_{\mathrm{ring}}}
\newcommand{\Lval}{{\mathcal{L}_{\mathrm{val}}}}
\newcommand{\Loag}{\mathcal{L}_{\mathrm{oag}}}
\newcommand{\Ldagger}[1]{{\mathcal{L}_{p, e}}}
\providecommand{\rmOmega}{\mathrm{\Omega}}
\newcommand{\I}[1][]{{\bf(i#1)}}
\title[AKE principles for finitely ramified henselian fields]{Ax--Kochen--Ershov principles for finitely ramified henselian fields}
\author{Sylvy Anscombe}
\address{Universit\'{e} Paris Cit\'{e} and Sorbonne Universit\'{e}, CNRS, IMJ-PRG, F-75013 Paris, France}
\email{sylvy.anscombe@imj-prg.fr}
\author{Philip Dittmann}
\address{Institut f\"{u}r Algebra, Technische Universit\"{a}t Dresden, 01062 Dresden, Germany}
\curraddr{Department of Mathematics, University of Manchester, Manchester M13 9PL, United Kingdom}
\email{philip.dittmann@manchester.ac.uk}
\author{Franziska Jahnke}
\address{Institut f\"{u}r Mathematische Logik und Grundlagenforschung,
University of M\"{u}nster,
Einsteinstr. 62,
48149 M\"{u}nster,
Germany}
\address{Institute for Logic, Language and Computation (ILLC),          
University of Amsterdam,
Science Park 107,
1098 XG Amsterdam, Netherlands}
\email{franziska.jahnke@uni-muenster.de}
\thanks{This is a version of this paper with a small update
  compared to the version published in the Transactions of the American Mathematical Society.
  See Remark~\ref{specrem:muensterlemma}.
  The numbering is the same as in the published version
}
\begin{document}
\begin{abstract}
We study the model theory of finitely ramified henselian valued fields of 
fixed initial ramification, obtaining versions
of the Ax--Kochen--Ershov principle as follows.
We identify the induced structure on the residue field and show that
once the residue field is endowed with this structure, the theory of the valued field
is determined by the theories of the enriched residue field and the value group. Similarly, we show
that the existential theory of the valued field is determined by the positive existential
theory of the enriched residue field. 
We also prove that an embedding of finitely ramified henselian valued fields 
is existentially closed as soon as the induced embeddings of value group and residue
field are existentially closed. This last result requires no enrichment of the residue field, 
in analogy to the corresponding result for model completeness, which holds by results of Ershov and Ziegler. 
\end{abstract}

\maketitle

\section{Introduction}
The aim of this paper is to give a number of Ax--Kochen--Ershov Theorems
for finitely ramified henselian valued fields: reducing questions of 
elementary equivalence, existential equivalence, elementarity of substructures, 
and existential closedness to corresponding properties of residue fields
and value groups.
In other words, we prove completeness, existential completeness and model completeness results for theories of finitely ramified henselian valued fields relative to residue fields and value groups.

Throughout the paper, we consider valued fields as structures 
in the three-sorted
language $\mathcal{L}_\mathrm{val}$ (see Section \ref{sec:preliminaries} for precise definitions of the
languages we use).
A valued field $(K,v)$ of mixed characteristic $(0,p)$ is called
\emph{finitely ramified} if the value group interval $(0,v(p)]$ 
is finite. The cardinality of this interval is called the \emph{initial ramification}.
Distinguished among finitely ramified valued fields
are the unramified ones, i.e., those where $v(p)$ is minimum positive in the value group.
For unramified henselian valued fields,
several Ax--Kochen--Ershov Theorems were recently proven by Anscombe and
Jahnke \cite{AJCohen},
with a number of predecessors for perfect residue fields\footnote{Without
  restricting to perfect 
residue fields, relative completeness was already
claimed by Ershov in \cite[Theorem 4.3.6]{ErshovMult} and by B\'elair in \cite[Corollaire 5.2]{Belair}. Moreover,
Ershov states an AKE principle for existential closedness \cite[Theorem 4.3.5]{ErshovMult}. However, all these proofs
rely on Witt vector techniques which are only available in the case of perfect residue fields, see~\cite[Proof of Theorem 4.1.3']{ErshovMult} and \cite[Proof of Corollaire 5.2]{Belair}.}
going back to \cite{AxKochen-II} and \cite{Er65}.

The model theory of finitely ramified henselian valued fields was first studied independently
by Ershov (see \cite{ErshovMult} for an English version) and Ziegler \cite{ZieglerDiss}. 
They show that an embedding of
finitely ramified henselian valued fields is elementary if and only if the induced
embeddings of value group and residue field are elementary.
However, without further structure on the residue field, reducing 
elementary equivalence and existential equivalence to corresponding properties of residue field
and value group fails: this is illustrated in Examples \ref{ex:notAKE} and \ref{ex:hiding} below.
It was previously shown in \cite{Ditt22} (see Example \ref{ex:hidex}) that even a transfer of existential decidability fails.

Therefore, in the present article we regard the residue fields of the valued fields under consideration as $\Ldagger{e}$-structures, where $\Ldagger{e}$ is an expansion of the language of rings defined in Section~\ref{sec:eisenstein}.
This additional structure is parameter-freely definable in the valued field and admits natural characterizations (see Remark~\ref{rem:characterise-dagger} and Remark~\ref{rem:induced-str}),
and it is also $\Lring$-definable on the residue field, albeit with parameters.
Our main results can now be stated as follows.
\begin{mtheorem}
Let $(K,v)$ and $(L,w)$ be two finitely ramified henselian valued fields of mixed characteristic $(0,p)$
of initial ramification $e$. 
Then, we have 
\begin{align*}
    \underbrace{(K,v) \equiv (L,w)}_{\text{in }\mathcal{L}_\mathrm{val}} &\Longleftrightarrow
    \underbrace{Kv \equiv Lw}_{\text{in } \Ldagger{e}} \text{ and } \underbrace{vK \equiv wL}_{\text{in }\mathcal{L}_\mathrm{oag}} \\[-0.4cm]
\intertext{and}
     \underbrace{(K,v) \equiv_\exists (L,w)}_{\text{in }\mathcal{L}_\mathrm{val}} &\Longleftrightarrow 
    \underbrace{Kv \equiv_{\exists^+} Lw}_{\text{in } \Ldagger{e}}
\intertext{(where $\equiv_{\exists}$ and $\equiv_{\exists^+}$ denote equality of (positive) existential theories)
and, in case $(K,v) \subseteq (L,w)$,}
     \underbrace{(K,v) \preceq_\exists (L,w)}_{\text{in }\mathcal{L}_\mathrm{val}}
     &\Longleftrightarrow
    \underbrace{Kv \preceq_\exists Lw}_{\text{in }\mathcal{L}_\mathrm{ring}}
     \text{ and }
     \underbrace{vK \preceq_\exists wK}_{\text{in }\mathcal{L}_\mathrm{oag}}.
\end{align*}
\end{mtheorem}
This theorem is proven (as Theorem~\ref{thm:AKE1}, Corollary~\ref{cor:etheory} and Theorem~\ref{thm:exemb}) in Section~\ref{sec:AKE}.
We also give a proof of the analogous version
for $\preceq$ first shown by Ershov and Ziegler.

Previous work on the model theory of finitely ramified henselian valued fields has largely worked not with the residue field as such, but with higher residue rings.
Basarab \cite[Theorem 3.1]{Bas78} proves
that the theory of any finitely ramified henselian valued field is 
determined by the theories of
of its residue rings and value group. More precisely, Basarab shows that
if $(K,v)$ and $(L,w)$
are finitely ramified henselian fields of mixed characteristic $(0,p)$ and the same initial
ramification, then
$$    \underbrace{(K,v) \equiv (L,w)}_{\text{in }\mathcal{L}_\mathrm{val}} \Longleftrightarrow \underbrace{\mathcal{O}_v/p^n \equiv \mathcal{O}_w/p^n}_{\text{in }\mathcal{L}_{\mathrm{ring}}}
\textrm{ for all }n \in \mathbb{N} \textrm{ and } \underbrace{vK \equiv wL}_{\text{in }\mathcal{L}_{\mathrm{oag}}}.$$
For finitely ramified henselian valued fields with perfect residue field, 
Lee and Lee \cite[Theorem 5.2]{LeeLee} show that Basarab's result can be sharpened:
it suffices to consider a single $n=n_0$ depending only on the residue characteristic and the initial ramification, as opposed to all $n \in \mathbb{N}$ simultaneously.
Our results formally imply those of Basarab and Lee--Lee (Remark~\ref{rem:dagger_coincide} and Corollary~\ref{cor:equiv-leelee}), and in fact sharpen them, since we do not need to restrict to perfect residue fields in the phrasing of Lee--Lee.
On the other hand, is not clear how to deduce our results in terms of residue fields from those in terms of higher residue rings.
Our approach also yields further results around stable embeddedness of the residue field (see Section \ref{sec:SE}), which are unattainable using previous techniques.

Unramified valued fields play a natural role in understanding
finitely
ramified ones: every complete $\mathbb{Z}$-valued field is a finite
extension of an unramified complete $\mathbb{Z}$-valued field
with the same residue field \cite[Theorem 11]{Co}, and such an extension is generated by a root of an Eisenstein polynomial.
It now follows from the Ax--Kochen--Ershov Theorem in equicharacteristic zero that every finitely ramified henselian valued
field is -- up to $\mathcal{L}_\mathrm{val}$-elementary equivalence -- a finite extension of an
unramified henselian valued field with the same value group and residue
field.
Our approach is to use Eisenstein polynomials to define a predicate on the residue field sort
of a finitely ramified henselian valued field $(K,v)$ of initial ramification $e$, which is then
added to the $\mathcal{L}_\mathrm{ring}$-language on the residue field to obtain
$\Ldagger{e}$.
\medskip

We now give an overview of the structure of the paper.
In Section~\ref{sec:preliminaries}, we remind the reader of finitely
ramified fields
and
the definability of finitely ramified henselian valuations,
and we give examples
of how relative completeness fails in finitely ramified henselian valued fields. 
In Section \ref{sec:eisenstein},
we then discuss Eisenstein polynomials.
Here we encounter the same bound as Lee and Lee.
We also introduce the $\Ldagger{e}$-structure on the residue field $Kv$ of a finitely ramified henselian valued field $(K,v)$ of initial ramification $e$. 
Section~\ref{sec:embedding-lemmas} provides our toolkit: 
we prove a number of embedding lemmas for the essential case of
$\mathbb{Z}$-valued fields.
These incorporate crucial innovations in the case of inseparable residue field extension,
which allow us to treat imperfect residue fields throughout the work.
We also discuss a number of possible simplifications of
the $\Ldagger{e}$-structure in restricted situations in Remark~\ref{rem:simplified-lang}.

In Section \ref{sec:AKE}, we apply the results from the previous section to prove our main results.
We prove the Ax--Kochen--Ershov principles announced above, and deduce results on a transfer of decidability for full theories (Corollary~\ref{cor:decidability_transfer}) and existential theories (Theorem~\ref{thm:H10}), thus describing precisely the difficulty of Hilbert's 10th Problem for finitely ramified henselian fields.
We also show that the expanded structure on residue fields which we consider throughout is in fact canonical: We characterize it up to positive existential interdefinability (Remark~\ref{rem:characterise-dagger}).
In the final section, we discuss results around stable embeddedness of the value group and residue field.
Such stable embeddedness would follow, as would all our Ax--Kochen--Ershov results, from an appropriate relative quantifier elimination result.
However, we show in Example~\ref{ex:qe-fails} that there is no elimination of valued field quantifiers,
not even in the special case of
unramified henselian valued fields in $\Lval$,
and not even if the structure is enriched by a cross-section or an angular component map.
Nonetheless, we show that
in any finitely ramified henselian valued field $(K,v)$, both $Kv$ and $vK$ are canonically 
stably
embedded (as an $\Ldagger{e}$-structure resp.~an $\Loag$-structure) and orthogonal (Theorem \ref{thm:SE} and Proposition \ref{prop:induced-str}).
In particular, the structure induced on $Kv$ is exactly the $\Ldagger{e}$-structure, which
is in turn definable (with parameters) in the $\Lring$-structure $Kv$.

\section{Preliminaries on finitely ramified valued fields}
\label{sec:preliminaries}
We fix a prime number $p$:
every mixed characteristic valued field we consider in this paper will have residue characteristic $p$.
The aim of this section is to introduce a language and an interpretation,
suitable for finitely ramified fields of residue characteristic $p$ with a fixed initial ramification $e$, 
in order to prove a variety of AKE-type theorems. 
Whenever we consider valued fields as first-order structures in this paper, we consider them 
in the three-sorted language $\mathcal{L}_\mathrm{val}$: 
As usual, this consists of 
a sort $K$ for the field,
a sort $\Gamma\cup\{\infty\}$ for the value group $vK$ together with infinity,
and a sort $k$ for the residue field $Kv$.
The two field sorts are each endowed with the language of rings
$\mathcal{L}_\mathrm{ring} = \{+,\cdot,-,0,1\}$ (where $+$ and $\cdot$ are binary and $-$ is unary),
and the value group sort with $\mathcal{L}_\mathrm{oag} = \{0, +, <,\infty\}$.
In addition, we have symbols for
the valuation map $v \colon K \twoheadrightarrow \Gamma \cup \{ \infty \}$ and the residue map
$\res \colon K \twoheadrightarrow k$, which we interpret as the constant zero map outside the valuation ring $\mathcal{O}_v$.
This language $\Lval$ is close to the ones used in \cite[Section 5.3]{vdD14} and \cite{Belair},
although neither of these sources specify how to interpret $v$ and $\res$ outside $K^\times$ and $\mathcal{O}_v$, respectively.

\begin{definition}
A valued field $(K,v)$ is called \emph{finitely ramified}
if it is of mixed characteristic $(0,p)$ and
the interval $(0,v(p)] \subseteq vK$ is finite.
In this case, we call
$e = |(0,v(p)]|$ the initial ramification of $(K,v)$.
It is \emph{unramified} if moreover $e=1$.
A valuation ring $A$ is finitely ramified (resp., unramified) if it corresponds to a finitely ramified (resp., unramified) valuation $v$ on $\mathrm{Frac}(A)$. 
\end{definition}

The most classical examples of finitely ramified fields are totally ramified extensions
of $(\mathbb{Q},v_p)$ and $(\mathbb{Q}_p, v_p)$, with $e$ being the degree of the field 
extension.

Recall that finitely ramified henselian valuation rings are 
$\mathcal{L}_\mathrm{ring}$-definable without parameters: 
\begin{remark} \label{Robdef}
Given
an initial ramification $e$, choose a natural number $n>e$ which is coprime to $p$ (e.g., $n=ep+1$).
Then,
the valuation ring of any finitely ramified henselian valued field
with
initial ramification $e$ 
is defined by the parameter-free existential $\mathcal{L}_\mathrm{ring}$-formula
\begin{align*} \phi_{p,e}(x) \equiv \exists y: y^n = 1 + p x^{n}.
\end{align*}
Similarly, the maximal ideal of such a valuation ring is defined by the $\mathcal{L}_\mathrm{ring}$-formula
\begin{align*} \psi_{p,e}(x) \equiv \exists y: p y^n = p + x^{n}.
\end{align*}
The latter gives rise to a parameter-free universal definition of the valuation ring.
Both of these are well-known variants of Robinson's formula defining $\mathbb{Z}_p$
in $\mathbb{Q}_p$ \cite[p.~303]{Rob65}.

In particular, the class of fields which carry some finitely ramified henselian valuation
with given initial ramification $e$ is first-order axiomatizable in $\Lring$.
Moreover, as both $\mathcal{O}_v$ and $\mathfrak{m}_v$ are uniformly positively existentially definable without
parameters, we obtain that across this class of fields,
the $\Lval$-structure is positively existentially interpretable in the $\Lring$-structure.
Therefore the $\Lring$-theory of a finitely ramified henselian valued field determines its $\Lval$-theory,
and the same holds for positive existential theories
by the usual rules for interpretations \cite[Theorem 5.3.2 and the following Remark 3]{Hodges_longer}.
In particular, two finitely ramified henselian valued fields are elementarily equivalent in
$\mathcal{L}_\mathrm{val}$ if and only if they are elementarily equivalent in $\mathcal{L}_\mathrm{ring}$.

Lastly, let us note that for an arbitrary valued field $(K,v)$,
the positive existential $\Lval$-theory determines the existential $\Lval$-theory:
This is simply because one can replace inequalities $x \neq y$ in the valued field sort or the residue field sort
by positive existential formulas $\exists z (xz = yz + 1)$ in the appropriate sort,
and in the value group sort there are positive quantifier-free replacements of $\neq$ and $\not<$ using the symbol $<$.%
\footnote{Even if $<$ were not present in our language on the value group sort
  (e.g.\ dropped in favour of $\leq$ as in some other sources),
  the binary relation $\gamma < \gamma'$ would still be positively existentially definable as $\exists x (\res(x) = 0 \wedge \res(x+1) = 0 \wedge \gamma' + v(x) = \gamma)$,
  since the elements $x$ of the valued field with $\res(x) = \res(x+1) = 0$ are precisely those of negative valuation.
  }
Therefore for a finitely ramified henselian valued field,
the existential $\Lring$-theory determines the existential $\Lval$-theory.
\end{remark}

It is clear that fixing the $\mathcal{L}_\mathrm{ring}$-theory of the residue field, the 
$\mathcal{L}_\mathrm{oag}$-theory of the
value group and the initial ramification does not determine a henselian finitely ramified field
up to $\mathcal{L}_\mathrm{val}$-elementary equivalence:
\begin{example} \label{ex:notAKE}
Suppose $p\neq 2$
and let $c \in \mathbb{Z}$ be a quadratic non-residue modulo $p$.
Then the quadratic extensions $\mathbb{Q}_p(\sqrt{p})$ and $\mathbb{Q}_p(\sqrt{cp})$ of $\mathbb{Q}_p$ are distinct, since the elements $p$ and $cp$ of $\mathbb{Q}_p$ lie in distinct square classes by construction.
Now $\mathbb{Q}_p(\sqrt{p})$ and $\mathbb{Q}_p(\sqrt{cp})$ with the unique extensions of the $p$-adic valuation on $\mathbb{Q}_p$ are henselian valued fields with value group isomorphic to $\mathbb{Z}$, residue field $\mathbb{F}_p$ and initial ramification $2$.
As they have different algebraic parts -- indeed, only one of them contains a square root of $p$ --, they
do not have the same existential $\Lring$-theory,
and in particular are not $\Lring$-elementarily equivalent.
(Compare also \cite[Remark 7.4]{AF16} for related observations.)
\end{example}

For finite extensions $K$ of $\mathbb{Q}_p$, it is well-known that the $\Lval$-theory of $K$ (with the unique extension of the $p$-adic valuation on $\mathbb{Q}_p$) is completely determined by the algebraic part $K \cap \overline{\mathbb{Q}}$ \cite[Theorems 3.4 and 5.1]{PR}.
In general, however, even fixing the algebraic part in addition to the
$\mathcal{L}_\mathrm{ring}$-theory of the residue field,
the $\mathcal{L}_\mathrm{oag}$-theory of the value group
and the initial ramification is insufficient to determine the $\Lval$-theory
of the valued field, or even its existential $\Lring$-theory:

\begin{example}\label{ex:hiding}
Again we suppose
$p \neq 2$. 
Consider $F=\mathbb{F}_{p}(t)^{\mathrm{perf}}$ (the perfect hull of $\mathbb{F}_p(t)$),
and let $(K_0, v)$ be the fraction field of the ring of Witt vectors over $F$ with its natural valuation with value group $\mathbb{Z}$ and residue field $F$.
Let $\tau \colon F\longrightarrow K_0$ be the unique multiplicative 
map choosing Teichmüller representatives as in \cite[Theorem 3.3]{AJCohen}.
Let
$\alpha_{1}=\sqrt{p\tau(t)}$,
and
$\alpha_{2}=\sqrt{p\tau(t^{3}+1)}$.
Consider
$K_{1}:=K_0(\alpha_{1})$
and
$K_{2}:=K_0(\alpha_{2})$, each endowed with the unique extension $v_i$ of $v$ to $K_i$.
Then $(K_{1}, v_1)$ and $(K_{2},v_2)$ are both complete (and in particular henselian), 
and they both have the same value group (isomorphic to $\mathbb{Z}$), the same initial ramification 
(namely $2$), and the same residue field $F$.
Moreover, we argue that they also have the same algebraic part, namely $\mathbb{Q}_{p,\mathrm{alg}} = \mathbb{Q}_p \cap \mathbb{Q}^\mathrm{alg}$.
A priori it is clear that the algebraic parts are either $\mathbb{Q}_{p,\mathrm{alg}}$ or ramified quadratic extensions thereof,
since $K_0$ has algebraic part $\mathbb{Q}_{p,\mathrm{alg}}$.
Consider first $K_1$.
If the algebraic part $K_{1,\mathrm{alg}}$ of $K_1$ is a quadratic extension of $\mathbb{Q}_{p,\mathrm{alg}}$ with uniformizer $\pi$,
we have
$p=\tau(a)u \pi^2$
for some $a\in\mathbb{F}_{p}^\times$ and some $u\in K_{1,\mathrm{alg}}$ with $v_1(u-1) > 0$.
Then $u$ has a square root in $K_{1,\mathrm{alg}}$ by Hensel's Lemma,
so $\sqrt{p\tau(a)}\in K_{1,\mathrm{alg}}$ and hence $\sqrt{\tau(at)}\in K_{1}$.
It follows that in the residue field of $K_{1}$ (i.e.~ in $F$), $at$ has a square root --- this is a contradiction. 
Similarly, for $K_2$, if the algebraic part $K_{2,\mathrm{alg}}$ were a proper extension
of $\mathbb{Q}_{p,\mathrm{alg}}$, then for some $a \in \mathbb{F}_p^\times$, $a (t^3+1)$ would
have a square root in $F$, which is again a contradiction.

Nevertheless, $K_1$ and $K_2$ do not have the same existential $\Lval$-theory:
by construction the curve $C \colon Y^2 = X^3 + 1$ has a rational
point with $X$-coordinate $t$ in the residue field $F(\sqrt{t^3+1})$ of $K_{2}(\sqrt{p})$;
however, in the residue field $F(\sqrt{t})$ of $K_{1}(\sqrt{p})$,
all rational points on the curve $C$ of genus $1$ have coordinates in $\mathbb{F}_p$
since $F(\sqrt{t}) = \bigcup_{n \geq 0} \mathbb{F}_p(t^{1/(2p^n)})$ is an increasing union of function fields over $\mathbb{F}_p$
of genus $0$ (see \cite[Lemma 3.2]{Koe02}, as well as the analogous argument in \cite[Example 3.7]{DefinableValuations}).
By Remark \ref{Robdef}, $K_1$ and $K_2$ do not even have the same existential $\Lring$-theory.
\end{example}

\section{Eisenstein polynomials and their traces on residue fields}
\label{sec:eisenstein}

Our approach to the model theory of finitely ramified henselian valued fields is based on understanding arbitrary discrete valuation rings of mixed characteristic as finite ring extensions of unramified ones.
We start with the following well-known fact.
\begin{lemma}\label{lem:eisenstein-generator}
Let $(K,v)$ be a valued field with 
value group $vK \cong \mathbb{Z}$,
and let $(L,w)/(K,v)$ be a finite extension which is totally ramified,
i.e.\ with $[L:K]=(wL : vK)$.
Then any uniformiser of $L$ generates $L$ over $K$, and its minimal polynomial $f$ is an Eisenstein polynomial over $\mathcal{O}_v$.
In other words, $f$ is monic, every non-leading coefficient of $f$ has positive valuation,
and its constant coefficient is a uniformizer of $v$.
\end{lemma}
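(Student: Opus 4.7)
The plan is to work directly from the ultrametric arithmetic of the value group. Set $n=[L:K]$ and identify $vK$ with $\mathbb{Z}$; since the extension is totally ramified we have $(wL:vK)=n$, so $wL$ is identified with $\tfrac{1}{n}\mathbb{Z}$ and any uniformiser $\pi\in L$ satisfies $w(\pi)=1/n$. The whole argument rests on the standard fact that in a valued field, a vanishing sum $\sum c_j=0$ must attain its minimum valuation at least twice.

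For the first assertion --- that $\pi$ generates $L$ over $K$ --- I would show that $1,\pi,\ldots,\pi^{n-1}$ are $K$-linearly independent, since this forces $[K(\pi):K]\geq n=[L:K]$. Assume $\sum_{i=0}^{n-1}a_i\pi^i=0$ with the $a_i\in K$ not all zero. Each nonzero summand $a_i\pi^i$ has valuation $v(a_i)+i/n$, and for distinct $i\in\{0,\ldots,n-1\}$ these values lie in distinct cosets of $\mathbb{Z}$ in $\mathbb{Q}$, hence are pairwise distinct. The minimum valuation is therefore attained by a unique term, contradicting the observation above. It follows that $\pi$ generates $L/K$ and its minimal polynomial $f(X)=X^n+a_{n-1}X^{n-1}+\cdots+a_0$ over $\mathcal{O}_v$ has degree exactly $n$ (once we verify $a_i\in\mathcal{O}_v$, which is what the next step gives).

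For the Eisenstein condition, I would apply the same coset argument to the relation
\[
\pi^n=-\bigl(a_{n-1}\pi^{n-1}+\cdots+a_1\pi+a_0\bigr).
\]
The left-hand side has valuation $1\in\mathbb{Z}$, whereas among the summands on the right only $a_0$ can have integer valuation (for $1\leq i\leq n-1$ the value $v(a_i)+i/n$ lies in the coset $i/n+\mathbb{Z}$). Balancing the minimum valuation on the two sides therefore forces $a_0\neq 0$ with $v(a_0)=1$, which is the Eisenstein condition on the constant term; and each remaining term must satisfy $v(a_i)+i/n\geq 1$, so $v(a_i)\geq 1$ since $v(a_i)\in\mathbb{Z}$. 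I do not foresee any real obstacle: the only care needed is in the initial identification $wL\cong\tfrac{1}{n}\mathbb{Z}$ and in checking that the exponents $0,\ldots,n-1$ give $n$ distinct residues modulo $\mathbb{Z}$, which is the entire point of total ramification.
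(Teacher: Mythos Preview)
Your argument is correct. The paper's own proof consists solely of a citation to Serre, \emph{Corps locaux}, Chapitre~I, \S6, Proposition~18, so there is nothing to compare at the level of strategy: you have simply written out the standard proof that lies behind that reference. The coset-of-$\mathbb{Z}$ bookkeeping you use (values $v(a_i)+i/n$ lying in pairwise distinct cosets $i/n+\mathbb{Z}$, forcing unique minima and hence linear independence and the Eisenstein shape) is exactly the classical argument.

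One small presentational point: when you write ``its minimal polynomial $f(X)=X^n+\cdots+a_0$ over $\mathcal{O}_v$ \ldots\ (once we verify $a_i\in\mathcal{O}_v$, which is what the next step gives)'', it would read more cleanly to first take $f$ as the monic minimal polynomial over $K$, carry out the valuation analysis, and then conclude $a_i\in\mathfrak{m}_v$ --- rather than announcing integrality in advance and deferring its verification. The logic is already sound; this is only a matter of exposition.
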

\begin{proof}
    See \cite[Chapitre I, §6, Proposition 18]{Serre}.
\end{proof}

We collect a few facts on zeroes of Eisenstein polynomials.
We use $v_p$ to denote the $p$-adic valuation on the rational numbers.
For $e \geq 1$, we set 
\begin{align*}
    d(e)
    &=e(1+ v_p(e)) \in \mathbb N.
\end{align*}

\begin{lemma}\label{lem:disc-bound}
    Let $A$ be an unramified discrete valuation ring.
    For any Eisenstein polynomial $f \in A[X]$ of degree $e$ we have $\Delta_f \not\in (p^{d(e)})$, where $\Delta_f \in A$ is the discriminant of $f$.
\end{lemma}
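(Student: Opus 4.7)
The plan is to express the discriminant as a norm and then extract a sharp valuation estimate using the coarser residue of a uniformiser modulo the ramification index. Since $f$ is Eisenstein, it is irreducible over $K = \mathrm{Frac}(A)$, and if $\alpha$ is a root in an algebraic closure then $L := K(\alpha)$ is totally ramified of degree $e$ over $K$, with $\alpha$ a uniformiser. Normalising the extension $w$ of $v$ so that $w(\alpha) = 1$, we have $w|_K = e\cdot v$ and the standard identity $v(N_{L/K}(x)) = w(x)$ for $x \in L^\times$; applied to $\Delta_f = \pm N_{L/K}(f'(\alpha))$, this reduces the problem to showing $w(f'(\alpha)) < d(e)$, equivalently $w(f'(\alpha)) \le d(e) - 1 = e - 1 + e\,v_p(e)$.

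Writing $f(X) = X^e + \sum_{k=0}^{e-1} a_k X^k$ with $v(a_k) \ge 1$ and $v(a_0) = 1$, I would expand
\[
f'(\alpha) = e\alpha^{e-1} + \sum_{k=1}^{e-1} k\, a_k\, \alpha^{k-1}.
\]
The leading term has valuation $w(e) + (e-1) = e\,v_p(e) + e - 1 = d(e) - 1$. For $1 \le k \le e-1$, each term $k a_k \alpha^{k-1}$ lies in $\alpha^{k-1}\cdot K^\times$, so its valuation equals $(k-1) + w(k a_k)$ where $w(k a_k) \in e\mathbb{Z}_{\ge 1}$. In particular, these valuations are all congruent to $k - 1$ modulo $e$, and hence are pairwise distinct since $k - 1$ ranges over $\{0, 1, \dots, e-2\}$.

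The key observation is then that $d(e) - 1 \equiv -1 \equiv e - 1 \pmod e$, which is not in $\{0, 1, \dots, e-2\}$. Consequently no summand $k a_k \alpha^{k-1}$ with $1 \le k \le e-1$ can have valuation exactly $d(e) - 1$. Because the valuations of those summands are pairwise distinct, the valuation of their sum equals the minimum of the summand valuations, hence is also $\ne d(e) - 1$. Comparing this sum to the leading term $e\alpha^{e-1}$ (whose valuation is exactly $d(e) - 1$), we conclude $w(f'(\alpha)) \le d(e) - 1$ in either case (whether the sum has larger or smaller valuation than the leading term), which gives the desired bound.

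The only subtle point is the control on the valuation of a sum, which would normally be only a lower bound; what saves us is the arithmetic coincidence that the ramification index forces all the ``inner'' terms to lie in congruence classes modulo $e$ that avoid the class of $d(e) - 1$. Apart from this observation, the proof is essentially a direct computation using Lemma~\ref{lem:eisenstein-generator} and the behaviour of the norm on a totally ramified extension.
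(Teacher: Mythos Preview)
Your proof is correct and follows the same overall strategy as the paper: both reduce to the identity $\Delta_f = \pm N_{L/K}(f'(\alpha))$ together with $v(N_{L/K}(x)) = w(x)$, so that everything comes down to showing $w(f'(\alpha)) \le d(e)-1$.

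The one genuine difference is in how that last inequality is obtained. The paper simply cites Serre's bound on the different (\cite[Chapitre III, §6, Proposition~13]{Serre}), which gives $v_\pi(f'(\pi)) \le e-1+v_\pi(e) = d(e)-1$ directly. You instead prove this inequality by hand: expanding $f'(\alpha)$ and observing that the $e$ summands $e\alpha^{e-1}, (e-1)a_{e-1}\alpha^{e-2}, \dots, a_1$ have $w$-valuations lying in the pairwise distinct residue classes $e-1, e-2, \dots, 0$ modulo $e$, so that $w(f'(\alpha))$ is the minimum of these valuations and in particular $\le w(e\alpha^{e-1}) = d(e)-1$. This is essentially the proof of Serre's upper bound, specialised to the situation at hand; it has the advantage of being self-contained and of making transparent why the bound holds. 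Note, incidentally, that your case split is unnecessary: since \emph{all} $e$ terms, including the leading one, lie in distinct residue classes modulo~$e$, no two can cancel, and $w(f'(\alpha))$ equals the minimum outright.
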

\begin{proof}
    By passing to the completion, we may assume that $A$ is complete.
    Let $K$ be the fraction field of $A$, $\pi$ a zero of $f$ in an algebraic closure of $K$, $L = K(\pi)$, and $B$ the discrete valuation ring of $L$ above $A$ (unique by completeness).
    Since $f$ is an Eisenstein polynomial, $B = A[\pi]$ and $\pi$ is a uniformiser of $B$ \cite[Chapitre I, §6, Proposition 17]{Serre}.
    It follows that, up to a sign, $\Delta_f$ is equal to $N_{B/A}(f'(\pi))$, where $N_{B/A}$ is the norm and $f'$ the formal derivative of $f$ \cite[4.4 Korollar 10]{Bosch}.
    By \cite[Chapitre III, §6, Proposition 13, Remarque]{Serre} we have $v_\pi(f'(\pi)) \leq e - 1 + v_\pi(e) < e + e v_p(e) = d(e)$, where $v_\pi$ and $v_p$ are the normalised valuations of $B$ and $A$, respectively.
    It follows that $v_p(N_{B/A}(f'(\pi))) = v_\pi(f'(\pi)) < d(e)$ using \cite[Chapitre II, §2, Corollaire 4]{Serre}, and so $N_{B/A}(f'(\pi)) \not\in (p^{d(e)})$.
\end{proof}

We deduce that an Eisenstein polynomial has a root
as soon as it has a root modulo $p^{d(e)}$:

\begin{proposition}\label{prop:eisenstein-root-approx}
    Let $A$ be an unramified discrete valuation ring,
	and $B \supseteq A$ a henselian extension.
    An Eisenstein polynomial $f \in A[X]$ of degree $e$ has a zero in $B$ if and only if its reduction mod $p^{d(e)}$ has a zero in $B/p^{d(e)}$.
\end{proposition}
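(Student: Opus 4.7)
The forward implication is immediate; I focus on the converse. Let $b \in B$ satisfy $p^{d(e)} \mid f(b)$ in $B$. The plan is to exhibit a root of $f$ in $B$ near $b$ via Hensel's lemma. Using the henselianness of $B$, I extend its valuation uniquely to an algebraic closure $\Omega$ of $\mathrm{Frac}(B)$, normalised so that $v(p) = 1$. Since $f$ is Eisenstein of degree $e$, its roots $\pi_1, \ldots, \pi_e \in \Omega$ all satisfy $v(\pi_i) = 1/e$, and the hypothesis becomes $v(f(b)) \geq d(e)$.

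The argument combines two valuation estimates. First, writing $v(f(b)) = \sum_i v(b - \pi_i) \geq d(e)$ and applying the pigeonhole principle produces a root $\pi = \pi_{i_0}$ with
\[ v(b - \pi) \geq d(e)/e = 1 + v_p(e). \]
Second, since $f$ is Eisenstein it is irreducible over $\mathrm{Frac}(A)$, so the roots $\pi_i$ form a single Galois orbit and the valuations $v(f'(\pi_i))$ are all equal. Combined with the identity $\Delta_f = \pm \prod_i f'(\pi_i)$ and Lemma~\ref{lem:disc-bound}, this gives
\[ v(f'(\pi)) = v_p(\Delta_f)/e < d(e)/e = 1 + v_p(e). \]
Hence the crucial strict inequality $v(b - \pi) > v(f'(\pi))$. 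Moreover, since $v(f'(\pi)) = \sum_{j \neq i_0} v(\pi - \pi_j)$ is a sum of nonnegative terms, we also have $v(b - \pi) > v(\pi - \pi_j)$ for each $j \neq i_0$.

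From here a routine ultrametric computation (expanding $f(b) = \prod_i (b - \pi_i)$ and $f'(b) = \sum_i \prod_{j\neq i} (b - \pi_j)$) shows that $v(f'(b)) = v(f'(\pi))$ and $v(f(b)) = v(b - \pi) + v(f'(\pi))$, so the standard Hensel condition $v(f(b)) > 2\,v(f'(b))$ is satisfied; the henselianness of $B$ then produces a root of $f$ in $B$.

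The main technical point is the strict comparison $v(b - \pi) > v(f'(\pi))$: the constant $d(e) = e(1 + v_p(e))$ is calibrated precisely so that the pigeonhole lower bound $d(e)/e$ on $v(b - \pi)$ strictly exceeds the discriminant upper bound $v_p(\Delta_f)/e < d(e)/e$ on $v(f'(\pi))$, uniformly across tame and wildly ramified cases. Everything else is bookkeeping once this comparison is in place.
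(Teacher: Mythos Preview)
Your argument is correct. The overall strategy matches the paper's: both combine the discriminant bound of Lemma~\ref{lem:disc-bound} with a form of Hensel's lemma. The only difference is packaging. The paper observes directly that $v(f(b)) \geq d(e) > v(\Delta_f)$ and then invokes the discriminant form of Hensel's lemma (the Hensel--Rychlik property, citing \cite[Lemma~12]{AxKochen-I}) as a black box. You instead unpack that black box by hand: passing to the algebraic closure, pigeonholing on the roots to locate a nearby $\pi$, using the Galois-invariance of $v(f'(\pi_i))$ to convert the discriminant bound into a bound on $v(f'(\pi))$, and then reducing to the standard condition $v(f(b)) > 2\,v(f'(b))$. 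Your route is more self-contained and makes the role of $d(e)$ very transparent, at the cost of some extra bookkeeping; the paper's is shorter but relies on a less commonly stated version of Hensel. One minor redundancy: the paper's proof of Lemma~\ref{lem:disc-bound} already establishes the bound on $v(f'(\pi))$ directly (and then deduces the discriminant bound via the norm), so in your argument you are in effect recovering that intermediate step from its consequence.
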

\begin{proof}
    The ``only if'' direction is clear.
    For the converse, the condition that the reduction of $f$ has a zero in $B/p^{d(e)}$ means that there exists $x \in B$ with $f(x) \in (p^{d(e)})$.
    We have $\Delta_f \not\in (p^{d(e)})$ by Lemma \ref{lem:disc-bound},
    and so for the valuation $v$ associated to $B$ we have $v(f(x)) > v(\Delta_f)$.
    Now \cite[Lemma 12]{AxKochen-I} (referring to the ``Hensel--Rychlik property'' defined on \cite[p.~608]{AxKochen-I})
    precisely asserts that under these circumstances $f$ has a zero in $B$,
    using the henselianity assumption.
\end{proof}

We will frequently use the following useful lemma.
Here and in the following, we write $R^{(p^n)}$ for the
set of $p^n$-th powers of a ring $R$.
\begin{lemma}\label{lem:Teich}
Let $(K,v)$ be a finitely ramified valued field
of 
initial ramification $e$
with residue field $k$.
Let $d\geq1$.
For each $\alpha\in k^{(p^{de-1})}$
there is a unique
$a\in(\mathcal{O}_v/p^{d})^{(p^{de-1})}$
with residue $\alpha$.
\end{lemma}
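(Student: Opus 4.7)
The plan is to analyze the residue map $\mathcal{O}_v/p^d\to k$ restricted to $p^{de-1}$-th powers and show that it induces a bijection onto $k^{(p^{de-1})}$. Existence is immediate: if $\alpha=\beta^{p^{de-1}}$, lift $\beta$ to any $b\in\mathcal{O}_v$, and then $a=b^{p^{de-1}}+p^d\mathcal{O}_v$ is a $p^{de-1}$-th power in $\mathcal{O}_v/p^d$ with residue $\alpha$.

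For uniqueness, suppose $b_1,b_2\in\mathcal{O}_v$ both have $b_j^{p^{de-1}}$ reducing to $\alpha$ modulo the maximal ideal $\mathfrak{m}_v$. In $k$ we then have $\overline{b_1}^{p^{de-1}}=\overline{b_2}^{p^{de-1}}$, so injectivity of Frobenius on the characteristic-$p$ field $k$ forces $\overline{b_1}=\overline{b_2}$, hence $v(b_2-b_1)\ge\gamma_1$, where $\gamma_1$ denotes the smallest positive element of $vK$. That such a $\gamma_1$ exists, and that $v(p)=e\gamma_1$, follows easily from finiteness of $(0,v(p)]$: its minimum is also the minimum of $vK_{>0}$, and the interval is exhausted by the multiples $\gamma_1,2\gamma_1,\ldots,e\gamma_1$. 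It then remains to upgrade $v(b_2-b_1)\ge\gamma_1$ to $v(b_2^{p^{de-1}}-b_1^{p^{de-1}})\ge de\gamma_1=v(p^d)$.

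For this, I would expand the difference via the binomial theorem and bound each summand $\binom{p^{de-1}}{i}b_1^{p^{de-1}-i}(b_2-b_1)^i$ using the classical identity $v_p\binom{p^{de-1}}{i}=de-1-v_p(i)$ for $1\le i\le p^{de-1}$; combined with $v(b_2-b_1)\ge\gamma_1$, this gives a valuation bound of $\bigl((de-1-v_p(i))e+i\bigr)\gamma_1$ on each term. The whole argument thereby reduces to the elementary inequality $(de-1-v_p(i))e+i\ge de$, valid for all $1\le i\le p^{de-1}$; this combinatorial check is the main obstacle. I would dispatch it by setting $m=v_p(i)$ and splitting into cases: for $m\le e-2$ the inequality is immediate, while for $m\ge e-1$ it reduces to $p^m\ge e(m-e+2)$, which follows by a straightforward induction on $m$ with base case $p^{e-1}\ge e$.
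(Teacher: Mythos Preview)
Your proof is correct. Both you and the paper reduce uniqueness to the implication
\[
b_1 \equiv b_2 \pmod{\mathfrak{m}_v} \;\Longrightarrow\; b_1^{p^{de-1}} \equiv b_2^{p^{de-1}} \pmod{\mathfrak{m}_v^{de}=p^d\mathcal{O}_v},
\]
but you establish it differently. The paper invokes the classical one-step lemma (attributed to Teichm\"uller): if $x\equiv y \pmod{\mathfrak{m}_v^m}$ then $x^p\equiv y^p\pmod{\mathfrak{m}_v^{m+1}}$, and simply iterates $de-1$ times. This keeps the binomial arithmetic trivial (only $p\mid\binom{p}{i}$ for $0<i<p$ is needed). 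You instead expand $(b_1+(b_2-b_1))^{p^{de-1}}$ in one shot, use the exact identity $v_p\binom{p^{de-1}}{i}=de-1-v_p(i)$, and are left with the combinatorial inequality $(de-1-v_p(i))e+i\ge de$. Your route is self-contained but pays for it with that final case check; the paper's is slicker but leans on an external reference.

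One small imprecision: your claim ``for $m\ge e-1$ it reduces to $p^m\ge e(m-e+2)$'' is literally the case $d=1$; for general $d\ge 1$ the bound you actually need is $p^m\ge e\bigl(m+1-d(e-1)\bigr)$, which is weaker. Since you go on to prove the stronger inequality $p^m\ge e(m-e+2)$ anyway, the argument still closes, but it would be cleaner to write ``it suffices to show'' rather than ``it reduces to''.
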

\begin{proof}
First note that, for $x,y\in \mathcal{O}_v$,
if $x\equiv y\pmod{\mathfrak{m}_v^{m}}$ then $x^{p}\equiv y^{p}\pmod{\mathfrak{m}_v^{m+1}}$
(see \cite[Hilfssatz 8]{Teich}).
Let $x,y\in\mathcal{O}_v$
and suppose that
$(x+p^{d}\mathcal{O}_v)^{p^{de-1}}$
and
$(y+p^{d}\mathcal{O}_v)^{p^{de-1}}$
both have residue $\alpha$.
Then 
$x+p^{d}\mathcal{O}_v$
and
$y+p^{d}\mathcal{O}_v$
have the same residue,
i.e.~$x\equiv y\pmod{\mathfrak{m}_v}$.
By the first sentence,
$x^{p^{de-1}}\equiv y^{p^{de-1}}\pmod{\mathfrak{m}_v^{de}}$,
and so
the images of $x^{p^{de-1}}$ and $y^{p^{de-1}}$ in $\mathcal{O}_v/p^{d}$ coincide,
i.e.~$(x+p^{d}\mathcal{O}_v)^{p^{de-1}}=(y+p^{d}\mathcal{O}_v)^{p^{de-1}}$.
\end{proof}

This lemma allows us to make the following definition:
\begin{definition}\label{def:Teich}
For each $d\geq1$,
the map
$\tau \colon k^{(p^{de-1})}\to (\mathcal{O}_{v}/p^{d})^{(p^{de-1})}$
sending each $\alpha$ to the unique $a\in(\mathcal{O}_{v}/p^{d})^{(p^{de-1})}$
with residue $a$
is called the {\em Teichmüller map}.
\end{definition}

We are now ready to define a collection of predicates on the residue field
of a finitely ramified valued field.
These predicates encode information on which polynomials have an approximate root, in particular applying to Eisenstein polynomials over an unramified subring as in Proposition \ref{prop:eisenstein-root-approx}.
This will be used in an essential way in the proof of our main embedding lemma for $\mathbb{Z}$-valued fields, Lemma \ref{lem:emb-imp}.
\begin{definition}\label{def:Omega}
For an $m$-tuple $\underline{b}=(b_j)_{1 \leq j \leq m}$ in some ring
and a multi-index $I = (i_j)_{1 \leq j \leq m}$
we let
$b^{I}:=\prod_{j = 1}^m b_j^{i_j}$ be the $I$-th monomial in the $b_j$.
We denote by $P_{m,n}$ the set of such multi-indices consisting of
indices $i_j < p^n$.
  
Let $(K,v)$ be a finitely ramified valued field of initial ramification $e$ and residue field $k$.
For $d>0$, we write $\res_d \colon \mathcal{O}_v \to \mathcal{O}_v/(p^d)$ for the $d$-th higher residue map.
For $d>0$ and $m\geq 0$,
we define:
\begin{align*}
    \rmOmega_{d,e,m}(K,v)&=
    \left\{(\gamma_{i,j,I}, \beta_l)_{\hspace{-6pt}{\substack{0\leq i<e,\\0<j<d,\\I\in P_{m,de-1},\\ 0\leq l< m}}}\hspace{-6pt}\in k^{e(d-1)p^{(de-1)m}+m}\;
    \left| \;
    \begin{array}{ll}
    \exists \underline{b}\in\mathcal{O}_{v}^{m} \text{ such that }
    \mathrm{res}(\underline{b})=\underline{\beta}
    \text{ and}\\[\medskipamount]
    \text{for }c_{ij}:=\hspace{-6pt}\sum\limits_{{I\in P_{m,de-1}}}\hspace{-6pt}
    \res_{d}(b^{I})\tau(\gamma_{i,j,I}^{p^{de-1}})
    \text{ the}\\ 
    \textrm{polynomial }X^e+\sum\limits_{i=0}^{e-1}(\sum\limits_{j=1}^{d-1} c_{ij}p^j )X^{i}\\[\medskipamount]
    \textrm{has a root in }\mathcal{O}_{v}/p^{d}.
    \end{array}
    \right\}
    \right.
\end{align*}
\end{definition}

We will define our expansion of the language on the residue field using these predicates.
As we want to expand by only one of these, we first need two auxiliary fast-growing functions, whose precise definitions only matter for the technical Lemma \ref{lem:pick-p-indep-to-expand} and Lemma \ref{lem:pick-p-lift-to-expand}.
\begin{definition}\label{def:fast-growing-fns}
  For natural numbers $m,n,g$ define the quantity $M_0(p,m,n,g) \in \mathbb{N}$ by $M_0(p,m,n,0) = m$ and the recursive rule $M_0(p,m,n,g+1) = M_0(p,m+n,n p^{n+m},g)$.
  Define further $M_1(p,n,g,d) \in \mathbb{N}$ by $M_1(p,n,g,0) = 0$ and $M_1(p,n,g,d+1) = M_0(p, M_1(p,n,g,d), n, g)$.
\end{definition}

\begin{definition}[Expanded language $\Ldagger{e}$] \label{def:Ldagger}
  Let $d = d(e)$ and $m = M_1(p,e,de-1,d)$.
We define
$\Ldagger{e}$ to be the expansion of
$\mathcal{L}_{\mathrm{ring}}$ by a
$(e(d-1)p^{(de-1)m}+m)$-ary predicate symbol\footnote{For
  the basic case $e=1$, this means we have a $0$-ary predicate symbol.
  Such symbols are traditionally not considered in model theory
  (although see \cite[Section B.2]{TransseriesBook}), but cause
  no technical problems.}
$\Omega$.
Given a valued field
$(K,v)$ of initial ramification $e$,
we consider the residue field $Kv$ as an $\Ldagger{e}$-structure
by interpreting $\Omega$ as the set $\rmOmega_{d,e,m}(K,v)$.
\end{definition}
Although it is not obvious from the definition,
we will see in Corollary \ref{cor:dagger-defble} that each of the sets $\rmOmega_{d,e,m}(K,v)$, and therefore the entire $\Ldagger{e}$-structure,
is $\Lring$-definable in $Kv$ using parameters.

\begin{remark}\label{rem:dagger_coincide}
  It is clear from the definition that the sets $\Omega_{d,e,m}(K,v)$ only depend on the ring $\mathcal{O}_v/p^d$.
  This has the following consequence:
  If $v_0$ is a proper coarsening of $v$ and $\overline{v}$ the valuation induced on the residue field $Kv_0$, then $\Omega_{d,e,m}(K,v) = \Omega_{d,e,m}(Kv_0, \overline v)$ since $\mathcal{O}_{\overline{v}}/p^d = \mathcal{O}_v/p^d$.
  Therefore the $\Ldagger{e}$-structure induced on the residue field $Kv = (Kv_0)\overline{v}$ is the same for $(K,v)$ and $(Kv_0, \overline v)$.
  Furthermore, since the $\Ldagger{e}$-structure $Kv$ is completely determined by $\mathcal{O}_v/p^{d(e)}$,
  our Ax--Kochen--Ershov style results as announced in the introduction formally imply
  results in terms of residue rings $\mathcal{O}_v/p^n$ (see Corollary \ref{cor:equiv-leelee} and Corollary \ref{cor:ex-equiv-leelee} below)
  in the style of Basarab \cite[Theorem 3.1]{Bas78} and Lee--Lee \cite[Theorem 5.2]{LeeLee}.
\end{remark}

The $\Ldagger{e}$-structure on the residue field is in fact existentially definable in
$\mathcal{L}_\mathrm{val}$.
\begin{lemma}\label{lem:int-imp}
Let $(K,v)$ be a finitely ramified valued field 
with residue field $k$ and initial ramification $e$.
Each set $\rmOmega_{d,e,m}(K,v)$ is existentially definable
without parameters in the
$\mathcal{L}_\mathrm{val}$-structure $(K,v)$
by a formula depending only on $e$ and $p$.
In particular, the
$\Ldagger{e}$-structure is definable on the residue sort of $(K,v)$.
\end{lemma}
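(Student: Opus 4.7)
The plan is to exhibit an explicit existential parameter-free $\mathcal{L}_\mathrm{val}$-formula that directly transcribes Definition~\ref{def:Omega}. Three things need to be expressed existentially: membership in $\mathcal{O}_v$, the Teichm\"uller map $\tau$, and the condition that the polynomial $f$ has a root in $\mathcal{O}_v/p^d$. The first is handled by Remark~\ref{Robdef}, which supplies an existential parameter-free $\mathcal{L}_\mathrm{ring}$-formula $\phi_{p,e}$ defining $\mathcal{O}_v$. The third will simply be the existential condition that there exist $x, y \in \mathcal{O}_v$ with $f(x) = p^d y$.

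The main subtlety -- and really the only obstacle -- is the Teichm\"uller map, which is not a symbol of the language. My plan is to invoke Lemma~\ref{lem:Teich} in reverse: if $g \in \mathcal{O}_v$ is any lift of $\gamma \in k$, then $g^{p^{de-1}}$ represents a $p^{de-1}$-th power in $\mathcal{O}_v/p^d$ whose residue is $\gamma^{p^{de-1}}$, and so by the uniqueness statement of the lemma it must coincide with $\tau(\gamma^{p^{de-1}})$. Thus I will existentially quantify lifts $g_{i,j,I}$ of the $\gamma_{i,j,I}$, imposing $\phi_{p,e}(g_{i,j,I})$ and $\res(g_{i,j,I}) = \gamma_{i,j,I}$, and replace each $\tau(\gamma_{i,j,I}^{p^{de-1}})$ modulo $p^d$ by $g_{i,j,I}^{p^{de-1}}$ modulo $p^d$. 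The coefficients $c_{ij}$ of Definition~\ref{def:Omega} then become explicit polynomial expressions in the quantified lifts.

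Assembling these pieces, my formula will assert the existence of elements $b_1,\ldots,b_m$, $x$, $y$, and $(g_{i,j,I})_{i,j,I}$ in the field sort, each satisfying $\phi_{p,e}$, subject to $\res(b_l) = \beta_l$, $\res(g_{i,j,I}) = \gamma_{i,j,I}$, and
\[
    x^e + \sum_{i=0}^{e-1}\Bigl(\sum_{j=1}^{d-1} c_{ij}\, p^j\Bigr) x^i \;=\; p^d y,
\]
where $c_{ij} := \sum_{I \in P_{m,de-1}} b^I g_{i,j,I}^{p^{de-1}}$ is a polynomial in the quantified variables. Correctness in both directions is then routine: given a tuple in $\rmOmega_{d,e,m}(K,v)$, I pick any lifts in $\mathcal{O}_v$ to witness the formula; conversely, given witnesses, I reduce everything modulo $p^d$ and apply Lemma~\ref{lem:Teich} to identify $g_{i,j,I}^{p^{de-1}}$ with $\tau(\gamma_{i,j,I}^{p^{de-1}})$. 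The resulting formula is manifestly parameter-free, existential, and depends only on $p$ and $e$, since both $d = d(e)$ and $m = M_1(p, e, de-1, d)$ are determined by $p$ and $e$. The ``in particular'' clause then follows, because $\Ldagger{e}$ adjoins to $\mathcal{L}_\mathrm{ring}$ on the residue sort only the single predicate $\Omega$.
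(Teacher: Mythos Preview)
Your proof is correct and follows essentially the same approach as the paper, which simply notes that the Teichm\"uller map and the root condition in $\mathcal{O}_v/p^d$ are existentially definable; you have spelled out these two points in detail, using Lemma~\ref{lem:Teich} exactly as intended to replace $\tau(\gamma^{p^{de-1}})$ by $g^{p^{de-1}}$ for any lift $g$ of $\gamma$. One minor remark: for general $d,m$ the formula you write down also depends on $d$ and $m$, not only on $e$ and $p$; the phrasing ``depending only on $e$ and $p$'' in the lemma should be read as uniformity in $(K,v)$, and your final sentence about $d$ and $m$ being determined by $e,p$ applies precisely to the ``in particular'' clause concerning $\Ldagger{e}$.
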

\begin{proof}
The Teichmüller map
$\tau:k^{(p^{de-1})}\to (\mathcal{O}_{v}/p^{d})^{(p^{de-1})}$
is existentially definable, and so is
the condition
that $f$ has a root in $\mathcal{O}_{v}/p^{d}$.
\end{proof}

\begin{remark}\label{rem:not-optimal}
  Neither Proposition~\ref{prop:eisenstein-root-approx} nor Lemma~\ref{lem:Teich} above are optimal as stated:
  The ideal $(p^{d(e)})$ in the proposition could be made larger,
  and the exponent $de-1$ in the lemma could be made smaller.
  With more work, it would hence be possible to reduce the arity of the predicates from Definition~\ref{def:Omega}, and hence in the language $\Ldagger{e}$.
  However, the arity in $\Ldagger{e}$ remains very large in any case due to the occurrence of the function $M_1$.
\end{remark}

\section{Embedding Lemmas}
\label{sec:embedding-lemmas}

 We now want to show how to lift homomorphisms of enriched residue fields to homomorphisms
 of finitely ramified valued fields.

\begin{remark}\label{rem:separability}
  In this section, separability of field extensions $l/k$ of characteristic $p$
  and related notions play a significant role.
  For the convenience of the reader, we briefly review this (standard) material,
  mostly following \cite{Mac_SepTrBases}.
  Let $k/k_0$ be an extension fields of characteristic $p$.
  A subset $A \subseteq k$ is \emph{relatively $p$-independent over $k_0$} if $k^{(p)}(k_0 \cup B) \subsetneq k^{(p)}(k_0 \cup A)$ for every $B \subsetneq A$,
  or equivalently, $[k^{(p)}(k_0 \cup A_0) : k^{(p)}(k_0)] = p^{\lvert A_0\rvert}$ for every finite $A_0 \subseteq A$.
  A \emph{relative $p$-basis} of $k/k_0$ is a set $A \subset k$ which is $p$-independent over $k_0$ with $k = k^{(p)}(k_0 \cup A)$,
  or equivalently a set which is maximal among those $p$-independent over $k_0$.
  Without mentioning $k_0$,
  we call $A \subseteq k$ (absolutely) \emph{$p$-independent} or an (absolute) \emph{$p$-basis}
  if it is relatively $p$-independent or a relative $p$-basis over the prime field $k_0 = \mathbb{F}_p$.
  These absolute notions are the most important ones for us.

  A (not necessarily algebraic) field extension $l/k$ is \emph{separable} if it satisfies the following equivalent conditions:
  \begin{enumerate}
  \item Every $p$-independent subset of $k$ remains $p$-independent in $l$.
  \item Some $p$-basis of $k$ remains $p$-independent in $l$.
  \item Every finitely generated subextension $l_0/k$ of $l/k$ has a separating transcendence basis
    (i.e.\ a transcendence basis $B$ such that $l_0/k(B)$ is separable algebraic in the usual sense).
  \end{enumerate}
  (See \cite[Theorem 7 and Theorem 16]{Mac_SepTrBases} for the equivalences;
  note that Mac Lane writes that ``$l/k$ preserves $p$-independence'' instead of the now current terminology ``separable''.)
  This notion of separability agrees with the usual one for algebraic extensions.
\end{remark}

We assume throughout this section and the next that every $\mathbb{Z}$-valued field has mixed characteristic $(0,p)$.
 We start start with the following general lemma for constructing an embedding of an unramified $\mathbb{Z}$-valued field into a complete $\mathbb{Z}$-valued field.
 This generalises \cite[Theorem 6.2]{AJCohen}, where the case of two unramified fields was treated under the assumption that the residue field extension is separable.

\begin{lemma}\label{muensterlemma}
  Let $(K, v)$ and $(L,w)$ be $\mathbb{Z}$-valued fields
 with residue fields $k, l$.
  Assume that $K$ is unramified and $L$ is complete.
  Let $\varphi \colon k \to l$ be a field embedding.
  Let $\underline\beta$ be a (possibly infinite) tuple of $p$-independent elements of $k$, $\underline b$ a lift of $\underline\beta$ in $K$, and $\underline b'$ a lift of $\varphi(\underline\beta)$ in $L$.
  Then there exists an embedding $\Phi \colon (K,v) \to (L,w)$ of valued fields sending $\underline b$ to $\underline b'$, compatible with $\varphi$.
\end{lemma}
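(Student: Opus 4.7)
The plan is to reduce to the case $K$ complete, extend $\underline\beta$ to a $p$-basis of $k$ with corresponding lifts on both sides, and then invoke a Cohen-style lifting argument based on Lemma~\ref{lem:Teich} to construct $\Phi$ as an inverse limit of maps between the quotients $\mathcal{O}_v/p^d\mathcal{O}_v$ and $\mathcal{O}_w/p^d\mathcal{O}_w$.

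First I would replace $K$ by its $v$-adic completion $\widehat K$: this is again an unramified $\mathbb{Z}$-valued field with residue field $k$ and still contains $\underline b$, and any valued field embedding $\widehat K \to L$ restricts to one from $K$ with the same properties. Then, by Zorn's lemma, I would extend $\underline\beta$ to a tuple $\underline\beta^*$ which is a $p$-basis of $k$ (that is, $p$-independent with $k = k^p(\underline\beta^*)$); write $\underline\beta^* = (\underline\beta, \underline\gamma)$ and pick arbitrary lifts $\underline c \subseteq \mathcal{O}_v$ of $\underline\gamma$ and $\underline c' \subseteq \mathcal{O}_w$ of $\varphi(\underline\gamma)$.

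The core of the argument is to construct a local ring embedding $\Phi \colon \mathcal{O}_v \to \mathcal{O}_w$ inducing $\varphi$ on residue fields and sending $(\underline b, \underline c) \mapsto (\underline b', \underline c')$. I would build it as an inverse limit of compatible ring maps $\Phi_d \colon \mathcal{O}_v/p^d \mathcal{O}_v \to \mathcal{O}_w/p^d \mathcal{O}_w$ constructed by induction on $d$, using the $p$-basis property to write each element of $\mathcal{O}_v/p^d$ in a canonical normal form involving iterated $p^n$-th powers together with Teichmüller-type lifts supplied by Lemma~\ref{lem:Teich}. The map $\Phi_d$ is then defined by applying $\varphi$ to the Teichmüller data and substituting $(\underline b, \underline c) \mapsto (\underline b', \underline c')$, with Lemma~\ref{lem:Teich} on the target side guaranteeing that the resulting assignment is well-defined. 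The inverse limit converges to a local ring homomorphism $\Phi$ by $p$-adic completeness of $\mathcal{O}_w$; injectivity of $\Phi$ is automatic since $\mathcal{O}_v$ is a domain and $\Phi(1) = 1$; and $\Phi$ extends to fraction fields to give the desired valued field embedding $K \to L$.

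The main obstacle is the construction in the case of an imperfect residue field: in the separable setting of \cite[Theorem 6.2]{AJCohen} one can apply Hensel's lemma to the minimal polynomials of residue-field generators to obtain lifts directly, but to handle inseparable residue extensions one needs precisely the $p$-basis decomposition together with the Teichmüller-style uniqueness of Lemma~\ref{lem:Teich}. The additional generality that $L$ is only assumed complete (rather than unramified, as in \cite{AJCohen}) poses no new difficulty, since the construction is phrased $p$-adically throughout and never requires $p$ to be a uniformiser of $\mathcal{O}_w$.
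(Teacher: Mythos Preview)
Your approach is correct and coincides with the alternative proof sketched in Remark~\ref{rem:alternative-muensterlemma}, rather than with the paper's main argument. The paper's main proof proceeds quite differently: it first treats the special case where $k/\mathbb{F}_p$ admits a separating transcendence basis (then the extended $p$-basis $\underline\beta$ \emph{is} such a basis, and one argues classically following Mac Lane by defining $\Phi$ on $\mathbb{Q}_p(\underline b)$ and extending via completion and Hensel's lemma), and then reduces the general case to this one by an ultraproduct trick over all subfields $k_i \subseteq k$ containing $\underline\beta$ that do admit a separating transcendence basis, combined with the separable-extension result \cite[Theorem~6.2]{AJCohen}. Your route avoids both the reduction to the separable case and the ultraproduct, and is in that sense more direct and elementary. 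One point deserves more care than you give it: Lemma~\ref{lem:Teich} supplies uniqueness of Teichm\"uller lifts, which makes your normal-form assignment a well-defined \emph{set map}, but it does not by itself show that the resulting $\Phi_d$ is a \emph{ring homomorphism}, since the normal form of $x+y$ is not the sum of the normal forms. The cleanest fix (as in Remark~\ref{rem:alternative-muensterlemma}) is to recognise the Teichm\"uller data as giving an embedding of the truncated Witt ring $W_d(k^{(p^N)})$ into $\mathcal{O}_v/p^d$, so that the map on this subring is the functorial $W_d(\varphi)$; then $\mathcal{O}_v/p^d$ is generated over $W_d(k^{(p^N)})$ by the images of the $p$-basis lifts, and one only has to check that the polynomial relations these satisfy are preserved, which can be done by hand. (A minor aside: injectivity of $\Phi$ does not follow merely from $\mathcal{O}_v$ being a domain with $\Phi(1)=1$; rather, the kernel is a prime ideal of the DVR $\mathcal{O}_v$ not containing $p$, hence zero.)
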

\begin{proof}
  By replacing $K$ with its completion, we may as well assume that $K$ is complete, and thus contains $\mathbb{Q}_p$ (identified with the topological closure of the subfield $\mathbb{Q}$).

  We assume first that $k/\mathbb{F}_p$ has a separating transcendence basis
  containing $\underline\beta$.
  By adding more elements to $\underline\beta$ (and to $\underline b$, $\underline b'$),
  we may suppose that $\underline\beta$ is a separating transcendence basis of $k/\mathbb{F}_p$.
  For this case, we reproduce the argument of \cite[Theorem 3]{Mac39b} (although that theorem
  has the superfluous hypothesis that $L$ is also unramified).
  
  Define a field embedding $\Phi_0 \colon \mathbb{Q}_p(\underline b) \to L$ by sending the tuple $\underline b$ to $\underline b'$.
  Considering $\mathbb{Q}_p(\underline b)$ as a valued subfield of $K$, $\Phi_0$ is an embedding of valued fields:
  Indeed, it suffices to check that for every polynomial $f \in \mathbb{Z}_p[\underline X]$, the value of $f(\underline b)$ in $K$ is the same as the value of $f(\underline b')$ in $L$ (where we normalise both valuations to give $p$ the same value) -- but this is true since the residues of $\underline b$ resp.\ $\underline b'$ are the algebraically independent tuples $\underline\beta$ resp.\ $\varphi(\underline\beta)$, so that the value of $f(\underline b)$ is simply the minimum of the values of the coefficients of $f$.

  It is also clear that the embedding $\Phi_0 \colon \mathbb{Q}_p(\underline b) \to L$ is compatible with the map of residue fields $\varphi \colon k \to l$.
  The map $\Phi_0$ extends uniquely (as a valued field embedding) to the closure of $\mathbb{Q}_p(\underline b)$ in $K$, which is identified with the completion $\widehat{\mathbb{Q}_p(\underline b)}$.
  Let us also write $\Phi_0$ for this extension.
  For every finite subextension $K_0$ of $K/\widehat{\mathbb{Q}_p(\underline b)}$, there is a unique embedding $K_0 \hookrightarrow L$ extending $\Phi_0$ and compatible with $\varphi$ (see \cite[Chapitre III, §5, Théorème 3]{Serre}, or apply Hensel's lemma to the image under $\Phi_0$ of the minimal polynomial of a primitive element of $K_0$ over $\widehat{\mathbb{Q}_p(\underline b)}$).
  Hence we have a distinguished embedding of the maximal algebraic subextension of $K/\widehat{\mathbb{Q}_p(\underline b)}$ into $L$.
  Since this maximal algebraic subextension is dense in $K$ as it has the same value group and residue field, we obtain a unique extension to an $\Lval$-embedding $\Phi \colon K \hookrightarrow L$ as desired.

  Let us now consider the general situation.
  Extending $\underline\beta$ (and $\underline b$, $\underline b'$) as above,
  we may assume that $\underline\beta$ is a $p$-basis of $k$.
  Let $(k_i)_{i \in I}$ be the family of all subfields of $k$ admitting a separating transcendence basis over $\mathbb{F}_p$ containing $\underline\beta$.
  For any finitely many elements $x_1, \dotsc, x_n \in k$, the field $\mathbb{F}_p(\underline\beta, x_1, \dotsc, x_n)$ occurs as one of the $k_i$:
  Indeed, $\mathbb{F}_p(\underline\beta, x_1, \dotsc, x_n)$ has a separating transcendence basis over $\mathbb{F}_p(\underline\beta)$ since it is finitely generated and separable, and $\mathbb{F}_p(\underline\beta)$ has the separating transcendence basis $\underline\beta$ over $\mathbb{F}_p$.
  It follows that we can choose an ultrafilter $\mathcal U$ on $I$ such that for every $x \in k$, $\{ i \in I \colon x \in k_i \} \in \mathcal U$.
  With this choice of $\mathcal U$, we have a natural embedding of $k$ into the ultraproduct $k_\infty := \prod_i k_i / \mathcal U$.
  This makes $k_\infty$ a separable extension of $k$, since $p$-independence is preserved.

  For each $i \in I$, we fix an unramified complete $\mathbb{Z}$-valued
  field $(C_i,v_i)$ with residue field $k_i$ and lifts $\underline b_i$ of $\underline \beta$ in $C_i$.
  By the first part, we have an embedding $(C_i,v_i) \hookrightarrow (L,w)$ sending $\underline b_i$ to $\underline b'$ for each $i$, and hence an embedding $(K_\infty,v_\infty) := \prod_i (C_i,v_i) / \mathcal U \hookrightarrow (L,w)^I / \mathcal U =: (L_\infty, w_\infty)$.
  The fields $(K_\infty, v_\infty)$ and $(L_\infty, w_\infty)$ are henselian with residue fields $k_\infty$ resp.\ $l_\infty := l^I / \mathcal U$, and the ultraproduct $\underline b_\infty$ of the tuples $\underline b_i$ provides a lifting of $\underline \beta$ in $K_\infty$.
  Observe that the embedding $K_\infty \hookrightarrow L_\infty$ maps $\underline b_\infty$ to the image of $\underline b'$ under the diagonal embedding $L \hookrightarrow L_\infty$.
  Passing to further ultrapowers if necessary, we may assume that $K_\infty$ and $L_\infty$ are $\aleph_1$-saturated.

  Consider the finest coarsening
  $\mathcal{O}_{v^\infty}^{0}$ (respectively, $\mathcal{O}_{w^\infty}^{0}$)
  of $\mathcal{O}_{v^\infty}$ (respectively, of $\mathcal{O}_{w^\infty}$) of residue characteristic $0$. Note that $v_\infty$ (respectively, $w_\infty$) induces a discrete valuation
  with valuation ring $\mathcal{O}_{\bar{v}_\infty}$ (respectively, $\mathcal{O}_{\bar{w}_\infty}$) and residue field $k_\infty$ (respectively, $l_\infty$) on the residue field of  
  $\mathcal{O}_{v^\infty}^{0}$ (respectively, $\mathcal{O}_{w^\infty}^{0}$).
  By $\aleph_1$-saturation of $K_\infty$ and $L_\infty$, $\mathcal{O}_{\bar{v}_\infty}$ and $\mathcal{O}_{\bar{w}_\infty}$ are complete.%
  \footnote{In fact, $\mathcal{O}_{\bar{v}_\infty}$ is precisely the ultraproduct of the bounded complete metric spaces $\mathcal{O}_{v_i}$,
    and $\mathcal{O}_{\bar{w}_\infty}$ is the ultrapower of $\mathcal{O}_w$,
    both taken with respect to the ultrafilter $\mathcal{U}$ and in the sense of continuous logic.}
  We can identify $\mathcal{O}_w$ with a subring of $\mathcal{O}_{\bar{w}_\infty}$ (using the embedding of $\mathcal{O}_w$ into the valuation ring of $L_\infty$).
  By \cite[Theorem 6.2]{AJCohen}, there is an embedding $\mathcal{O}_v \hookrightarrow \mathcal{O}_{\bar{v}_\infty}$ inducing the embedding $k \hookrightarrow k_\infty$ and sending the given lift $\underline b$ of $\beta$ to $\underline b_\infty$ (or rather its residue in the residue field of $\mathcal{O}_{v_\infty}^{0}$).\footnote{It
    should be stressed that we do not get a ring embedding of $\mathcal{O}_v$ into the valuation ring of $K_\infty$ from the ultraproduct construction.
  Indeed, if $x, y \in K$ are such that the residue field of $\mathbb{Q}(x, y) \subseteq K$ does not have a separating transcendence basis over $\mathbb{F}_p$, then there is no way to embed $\mathbb{Q}(x,y)$ into any of the $C_i$ as a valued field.}

We thus have a composite ring embedding $\mathcal{O}_v \hookrightarrow \mathcal{O}_{\bar{v}_\infty} \hookrightarrow \mathcal{O}_{\bar{w}_\infty}$ inducing the map $k \hookrightarrow k_\infty \hookrightarrow l_\infty$ on residue fields and sending $\underline b$ to (the residue of) $\underline b'$ in the residue field of $\mathcal{O}_{w_\infty}^{0}$.
  It remains to argue that the image of this ring embedding is actually contained in 
  $\mathcal{O}_w \subseteq \mathcal{O}_{\bar{w}_\infty}$.

  To see this, we copy the proof of \cite[Theorem 5.1]{AJCohen} (after Mac Lane).
  By completeness of $\mathcal{O}_w$ it suffices to show that for every $n > 0$ the image of the induced map $\mathcal{O}_{v}/p^n \hookrightarrow \mathcal{O}_{\bar{w}_\infty}/p^n$ is contained in $\mathcal{O}_w/p^n$.
  Choose $N \gg n$.
  Since $k$ is generated by the $\underline\beta$ over $k^{(p^N)}$ \cite[Theorem A1.4~a]{Eisenbud}, the ring $\mathcal{O}_v/p^n$ is generated by the residues of the $\underline b$ and its subset $(\mathcal{O}_v/p^n)^{(p^N)}$ (cf.\ \cite[Lemma 3.5]{AJCohen}).
  Now the images in $\mathcal{O}_{\bar{w}_\infty}/p^n$ of the residues of the $\underline b$ are contained in $\mathcal{O}_w/p^n$ by construction;
  and a $p^N$-th power in $\mathcal{O}_{\bar{w}_\infty}/p^n$ is already determined by its residue in $l_\infty$
  by Lemma~\ref{lem:Teich}
  (using that $\mathcal{O}_{\bar{w}_\infty}$ is finitely ramified, and that $N$ is large), and therefore the image in $\mathcal{O}_{\bar{w}_\infty}/p^n$ of a $p^N$-th power in $\mathcal{O}_{v}/p^n$ lies in $\mathcal{O}_w/p^n$.
\end{proof}

\begin{remark}\label{rem:alternative-muensterlemma}
  We sketch an alternative proof, not using ultraproducts or the weaker embedding result \cite[Theorem 6.2]{AJCohen} for separable residue field extensions.

  By completeness of $(L,v)$, it suffices to construct a compatible family of embeddings $\mathcal{O}_v/p^n \to \mathcal{O}_w/p^n$ for all $n \geq 1$, lifting the map $\varphi$ and sending the residues of the $\underline b$ to the residues of the $\underline b'$.
  In order to construct this embedding, first observe that for large $N \gg n$, the rings $\mathcal{O}_v/p^n$ and $\mathcal{O}_w/p^n$ canonically embed the truncated Witt rings $W_n(k^{(p^N)})$ resp.\ $W_n(l^{(p^N)})$ (essentially by Lemma~\ref{lem:Teich}).
  Secondly, $\mathcal{O}_v/p^n$ is in fact generated over $W_n(k^{(p^N)})$ by the residues of the $\underline b$, after possibly extending $\underline\beta$ to be a full $p$-basis of the residue field $k$.
  The desired embedding of $\mathcal{O}_v/p^n$ into $\mathcal{O}_w/p^n$ is then determined by sending $W_n(k^{(p^N)})$ to $W_n(l^{(p^N)})$ via $\varphi$, and sending the residues of the $\underline b$ to the residues of the $\underline b'$; to verify that this is possible, one only needs to check that all polynomial relations over $W_n(k^{(p^N)})$ satisfied by the $\underline b$ translate to relations over $W_n(l^{(p^N)})$ satisfied by the $\underline b'$, which can be done by hand.

  (This line of reasoning is related to the identification of $\mathcal{O}_v/p^n$ with a ring extension of a truncated Witt ring with explicit generators and relations given by Schoeller \cite[§§ 2--3]{Sch72}.)
\end{remark}

\begin{specialremark}\label{specrem:muensterlemma}
  This remark is not in the published version of this article.

  The proof of Lemma~\ref{muensterlemma} above corrects a small mistake
  in the published version.
  The proof there first considered the special case where $k/\mathbb{F}_p$ has a
  separating transcendence basis, and then claimed that any $p$-basis $\underline\beta$
  of $k$ automatically was such a separating transcendence basis.
  As Junguk Lee made us aware, this is not necessarily the case,
  as for instance the field $\mathbb{F}_p(t_1, t_2, t_3, \dotsc)$
  and its $p$-basis $t_1 + t_2^p, t_2 + t_3^p, t_3 + t_4^p, \dotsc$ show.
  The proof above instead applies the same argument as in the published version
  in the more restrictive special situation
  where $\underline\beta$ can be extended to be a separating transcendence basis of $k/\mathbb{F}_p$,
  which suffices for the later argument in the general situation.

  Meanwhile, a completely different and somewhat simpler proof for
  Lemma~\ref{muensterlemma} was given in
  \cite[Remark~2.28]{Dittmann_formal-smoothness}
  using a formal smoothness argument.
\end{specialremark}

We wish to use Lemma \ref{muensterlemma} to prove an embedding lemma (Lemma \ref{lem:emb-imp} below), asserting that
$\Ldagger{e}$-homomorphisms of residue fields
lift to 
$\mathcal{L}_{\mathrm{val}}$-embeddings of complete $\mathbb{Z}$-valued fields.
To do so in the case of imperfect residue fields, we need two ancillary lemmas concerning representations of elements in terms of a $p$-independent tuple.
These are variants of results which are well-known (see for instance \cite[Theorem A1.4]{Eisenbud} for Lemma \ref{lem:pick-p-indep-to-expand} and \cite[Proposition 3.6]{AJCohen} for Lemma \ref{lem:pick-p-lift-to-expand}), but with a bound for the length of a $p$-independent tuple required.
Recall here the functions $M_0$ and $M_1$ from Definition \ref{def:fast-growing-fns}.

\begin{lemma}\label{lem:pick-p-indep-to-expand}
  Let $k$ be a field of characteristic $p$, $g \geq 0$, and $\beta_1, \dotsc, \beta_m \in k$ a list of $p$-independent elements.
  Given finitely many elements $x_1, \dotsc, x_n \in k$, we can expand the given list to elements $\beta_1, \dotsc, \beta_m, \beta_{m+1}, \dotsc, \beta_{m'}$ which remain $p$-independent, with $m' \leq M_0(p,m,n,g)$, and such that $x_i \in k^{(p^g)}[\underline\beta]$ for all $i$.
\end{lemma}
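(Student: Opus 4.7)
The plan is to proceed by induction on $g$, matching the recursive definition of $M_0$. The base case $g = 0$ is immediate: $k^{(p^0)}[\underline\beta] = k$ already contains each $x_i$, so we may take $m' = m = M_0(p,m,n,0)$.

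For the inductive step from $g$ to $g+1$, I would proceed in two phases. In the first phase, I extend $\underline\beta$ to a $p$-independent tuple $\underline\beta' = (\beta_1, \dotsc, \beta_{m'})$ with $m' \leq m + n$ such that each $x_i \in k^{(p)}[\underline\beta']$. This can be done greedily: iterate through $x_1, \dotsc, x_n$ and, whenever the current $x_j$ fails to lie in the $k^{(p)}$-algebra generated by the tuple obtained so far, append it. Appending an element $y \notin k^{(p)}[\underline\beta]$ to a $p$-independent tuple preserves $p$-independence (since $y^p \in k^{(p)}$, one has $[k^{(p)}[\underline\beta, y] : k^{(p)}[\underline\beta]] = p$), so the resulting tuple $\underline\beta'$ remains $p$-independent.

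In the second phase, I write each $x_i = \sum_{I \in P_{m',1}} c_{i,I} {\beta'}^I$ with $c_{i,I} \in k^{(p)}$; this is possible because $\beta_j^p \in k^{(p)}$, so exponents in the monomials may be reduced modulo $p$ at the cost of absorbing $p$-th powers into the coefficients. Each $c_{i,I}$ is itself a $p$-th power, say $c_{i,I} = d_{i,I}^p$ for some $d_{i,I} \in k$, and the total number of such $d_{i,I}$ is at most $n \cdot p^{m'} \leq n p^{m+n}$. Applying the inductive hypothesis at level $g$ to $\underline\beta'$ and to the family $(d_{i,I})$, I obtain a $p$-independent extension $\underline\beta''$ of length at most $M_0(p, m+n, n p^{m+n}, g) = M_0(p, m, n, g+1)$, with every $d_{i,I} \in k^{(p^g)}[\underline\beta'']$.

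Finally, additivity of the Frobenius in characteristic $p$ closes the argument: expanding $d_{i,I} = \sum_J e_{i,I,J} {\beta''}^J$ with $e_{i,I,J} \in k^{(p^g)}$ gives $c_{i,I} = d_{i,I}^p = \sum_J e_{i,I,J}^p {\beta''}^{pJ}$, a polynomial in $\underline\beta''$ with coefficients in $k^{(p^{g+1})}$. Hence $x_i = \sum_I c_{i,I} {\beta'}^I \in k^{(p^{g+1})}[\underline\beta'']$, which completes the induction. The only point requiring care is the bookkeeping in the second phase, where the passage from $n$ original targets to $n p^{m+n}$ coefficient $p$-th roots is exactly what forces the shape of the recursion defining $M_0$.
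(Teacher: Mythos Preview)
Your proof is correct and follows essentially the same inductive scheme as the paper: base case $g=0$, then extend $\underline\beta$ by at most $n$ elements to get each $x_i$ into $k^{(p)}[\underline\beta']$, write the resulting $p$-th-power coefficients, and recurse. The only cosmetic difference is that the paper phrases your greedy Phase~1 as choosing a relative $p$-basis of $k^{(p)}[\underline\beta,x_1,\dotsc,x_n]$ over $k^{(p)}[\underline\beta]$, and it makes explicit the monotonicity of $M_0$ in its first two arguments that you use implicitly when bounding $M_0(p,m',n p^{m'},g)$ by $M_0(p,m+n,n p^{m+n},g)$.
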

\begin{proof}
  We use induction on $g$, with the case $g=0$ being clear.
  Therefore suppose that $g > 0$.
  The field extension $k^{(p)}[\beta_1, \dotsc, \beta_m, x_1, \dotsc, x_n]/k^{(p)}[\beta_1, \dotsc, \beta_m]$ has degree at most $p^n$, so we can find a relative $p$-basis $\beta_{m+1}, \dotsc, \beta_{m'}$ for it with $m' \leq m + n$.
  Thus $x_1, \dotsc, x_n \in k^{(p)}[\beta_1, \dotsc, \beta_{m'}]$, so we may write $x_i = \sum_{I \in P_{m',1}} \beta^I y_{i, I}^p$ with elements $y_{i,I} \in k$.
  Now apply the induction hypothesis to the $p$-independent elements $\beta_1, \dotsc, \beta_{m'}$ and the $n \cdot p^{m'}$ many elements $y_{i,I}$,
  using that $M_{0}(p,m,n,g)$ is monotone in $m$ and $n$.
\end{proof}

\begin{lemma}\label{lem:pick-p-lift-to-expand}
  Let $A$ be a complete unramified discrete valuation ring with residue field $k$.
  Let $g, d \geq 0$.
  Given $x_1, \dotsc, x_n \in A$, there exist $p$-independent elements $\beta_1, \dotsc, \beta_m \in k$ with lifts $b_1, \dotsc, b_m \in A$ with $m \leq M_1(p,n,g,d)$ such that
  for each $i = 1, \dotsc, n$ we have
  \[ x_i \equiv \sum_{j=0}^{d-1} \sum_{I \in P_{m,g}} p^j b^I y_{i,j,I}^{p^g} \bmod p^d \]
  for suitable elements $y_{i,j,I} \in A$.
\end{lemma}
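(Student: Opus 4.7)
The plan is to proceed by induction on $d$, following the recursive definition of $M_1(p,n,g,d)$. The base case $d=0$ is trivial: take $m=0$ and the empty family of $\beta$'s, so that the inner sum is empty and the congruence modulo $p^0 = (1)$ holds vacuously.

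For the inductive step, suppose the result holds for $d$ and we wish to prove it for $d+1$. Apply the induction hypothesis to $x_1, \dotsc, x_n$: we obtain $p$-independent $\beta_1, \dotsc, \beta_{m_0} \in k$ with $m_0 \leq M_1(p,n,g,d)$, lifts $b_1, \dotsc, b_{m_0} \in A$, and coefficients $y_{i,j,I} \in A$ for $0 \leq j < d$ and $I \in P_{m_0, g}$, such that $x_i \equiv \sum_{j=0}^{d-1} \sum_{I} p^j b^I y_{i,j,I}^{p^g} \pmod{p^d}$. Write $x_i = \sum_{j,I} p^j b^I y_{i,j,I}^{p^g} + p^d z_i$ for suitable $z_i \in A$; it then suffices to express each $z_i$ modulo $p$ in the appropriate form.

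For this, apply Lemma~\ref{lem:pick-p-indep-to-expand} to the residues $\bar z_1, \dotsc, \bar z_n \in k$ and to the existing $p$-independent list $\beta_1, \dotsc, \beta_{m_0}$: one obtains an extension to a $p$-independent tuple $\beta_1, \dotsc, \beta_m$ with
\[ m \leq M_0(p, m_0, n, g) \leq M_0(p, M_1(p,n,g,d), n, g) = M_1(p,n,g,d+1), \]
using monotonicity of $M_0$ in its second argument, and such that each $\bar z_i \in k^{(p^g)}[\beta_1, \dotsc, \beta_m]$. Choose arbitrary lifts $b_{m_0+1}, \dotsc, b_m \in A$, write $\bar z_i = \sum_{I \in P_{m,g}} \beta^I \bar c_{i,I}^{p^g}$ with $\bar c_{i,I} \in k$, and lift each $\bar c_{i,I}$ to some $c_{i,I} \in A$. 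Then $p^d z_i \equiv \sum_{I \in P_{m,g}} p^d b^I c_{i,I}^{p^g} \pmod{p^{d+1}}$. Padding the earlier coefficients $y_{i,j,I}$ by zero along the natural inclusion $P_{m_0,g} \hookrightarrow P_{m,g}$ (given by extending multi-indices by further zeros) and setting $y_{i,d,I} := c_{i,I}$ yields the desired representation of $x_i$ modulo $p^{d+1}$ using the enlarged $p$-independent tuple.

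The proof is structurally routine once the induction is organised this way; the function $M_1$ is manufactured precisely to accommodate this recursion, with each step invoking Lemma~\ref{lem:pick-p-indep-to-expand} to absorb the residues of the new error terms. The only subtlety is that Lemma~\ref{lem:pick-p-indep-to-expand} may enlarge the $p$-independent tuple at each step, so previous coefficients must be reinterpreted on the larger index set by zero-padding, but this produces no genuine obstacle.
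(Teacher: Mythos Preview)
Your proof is correct and essentially identical to the paper's: both proceed by induction on $d$, writing the error term $p^d z_i$ (the paper's $x_i'$) after applying the induction hypothesis, then invoking Lemma~\ref{lem:pick-p-indep-to-expand} to enlarge the $p$-independent tuple so that the residues $\bar z_i$ lie in $k^{(p^g)}[\underline\beta]$, with the same monotonicity argument for the bound via $M_0$ and $M_1$. Your explicit mention of zero-padding the earlier coefficients on the enlarged index set is a detail the paper leaves implicit under ``rearranging''.
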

\begin{proof}
  We fix $g$, and use induction on $d$.
  There is nothing to be shown for $d = 0$.
  Suppose we have found elements $(\beta_l)_{1 \leq l \leq m}, (b_l)_{1 \leq l \leq m}, (y_{i,j,I})_{1 \leq i \leq n, 0 \leq j \leq d-1, I \in P_{m,g}}$
  as desired for some $d$, with $m \leq M_1(p,n,g,d)$.
  For each $i = 1, \dotsc, n$ write \[ x_i' = p^{-d} (x_i - \sum_{j=0}^{d-1} \sum_{I \in P_{m,g}} p^j b^I y_{i, j,I}^{p^g}) .\]
  By Lemma \ref{lem:pick-p-indep-to-expand} we can expand the list $\beta_1, \dotsc, \beta_m$ to elements
  $\beta_{1},\ldots,\beta_{m},\beta_{m+1},\ldots,\beta_{m'}$,
  with
  $m' \leq M_0(p,m,n,g)$
  such that the residue $x_i' + (p)$ can be written as a $k^{(p^g)}$-linear combination of monomials in the $\beta_l$.
  In other words, $x_i' + (p) = \sum_{I \in P_{m',g}} \beta^I {z'_{i, I}}^{p^g}$ for suitable elements $z'_{i, I} \in k$.
  Choose arbitrary lifts $b_l$ of the newly added $\beta_l$, and a lift $y'_{i,I} \in A$ of each $z'_{i,I}$.
  Then $x_i' \equiv \sum_{I \in P_{m',g}} b^I {y'_{i,I}}^{p^g} \bmod p$, and we obtain the desired representation of $x_i$ mod $p^{d+1}$ by rearranging.
   Finally,
   $m' \leq M_{0}(p,m,n,g)\leq M_0(p,M_{1}(p,n,g,d),n,g)=M_1(p,n,g,d+1)$ holds
   by 
   monotonicity of $M_{0}(p,m,n,g)$ in $m$.
\end{proof}

\begin{lemma}[Embedding lemma for $\mathbb{Z}$-valued fields]\label{lem:emb-imp}
Let $(K,v),(L,w)$ be complete $\mathbb{Z}$-valued fields
with residue fields $k$ and $l$ respectively, each
of initial ramification $e$. Then,
every $\Ldagger{e}$-homomorphism
$k \to l$
is induced by an $\mathcal{L}_{\mathrm{val}}$-embedding
$(K,v)\to (L,w)$.
Moreover
every $\Ldagger{e}$-isomorphism
$k \to l$
is induced by an $\mathcal{L}_{\mathrm{val}}$-isomorphism
$(K_,v)\to(L,w)$.
\end{lemma}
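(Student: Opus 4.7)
The plan is to reduce the embedding problem to producing a root in $L$ of an Eisenstein polynomial transferred from $K$, using the predicate $\rmOmega$ of Definition~\ref{def:Ldagger} as a first-order encoding for ``Eisenstein root exists''. Let $K_0 \subseteq K$ denote the maximal unramified subfield of $K$; it is a complete unramified $\mathbb{Z}$-valued field with residue field $k$. By Lemma~\ref{lem:eisenstein-generator}, $K = K_0(\pi)$ where $\pi$ is a uniformizer whose minimal polynomial $f \in \mathcal{O}_{v|_{K_0}}[X]$ is Eisenstein of degree $e$. Let $L_0 \subseteq L$ denote the analogous subfield of $L$.

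Set $d = d(e)$ and apply Lemma~\ref{lem:pick-p-lift-to-expand} (with $g = de-1$) to the coefficients of $f$ to obtain a $p$-independent tuple $\underline\beta = (\beta_l)$ in $k$ of length at most $M_1(p,e,de-1,d)$, a lift $\underline b$ in $\mathcal{O}_{v|_{K_0}}$, and residue field elements $\gamma_{i,j,I} \in k$ (coming from the residues of the $p^{de-1}$-th roots in the representation) such that modulo $p^d$ the coefficients of $f$ acquire exactly the shape prescribed in Definition~\ref{def:Omega}. Here the Eisenstein condition together with the $p$-independence of $\underline\beta$ and the large exponent $p^{de-1}$ forces the $j=0$ contribution in Lemma~\ref{lem:pick-p-lift-to-expand} to vanish modulo $p^d$ via Lemma~\ref{lem:Teich}. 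Since $\pi$ witnesses that $f$ has a root in $\mathcal{O}_v$, the tuple $(\gamma_{i,j,I}, \beta_l)$ lies in $\rmOmega_{d,e,m}(K,v)$.

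Because $\varphi$ is an $\Ldagger{e}$-homomorphism, the image tuple $(\varphi(\gamma_{i,j,I}), \varphi(\beta_l))$ lies in $\rmOmega_{d,e,m}(L,w)$. Unpacking the definition, there is a lift $\underline b' \in \mathcal{O}_w$ of $\varphi(\underline\beta)$ such that the Eisenstein polynomial $f'$ assembled from the $\varphi(\gamma_{i,j,I})$ and $\underline b'$ has a root in $\mathcal{O}_w/p^d$. Now invoke Lemma~\ref{muensterlemma} to lift $\varphi$ to an $\Lval$-embedding $\Phi_0 \colon K_0 \to L$ sending $\underline b$ to $\underline b'$. Since Teichmüller representatives are the unique $p^{de-1}$-th powers with a given residue (Lemma~\ref{lem:Teich}), and $\Phi_0$ preserves $p^{de-1}$-th powers and the residue map, one checks $\Phi_0(f) \equiv f' \pmod{p^d}$. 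Proposition~\ref{prop:eisenstein-root-approx} then yields a root $\pi' \in \mathcal{O}_w$ of $\Phi_0(f)$. Extending $\Phi_0$ by $\pi \mapsto \pi'$ produces a well-defined ring homomorphism $\Phi \colon K \to L$; since $\Phi_0(f)$ is still Eisenstein, $\pi'$ is a uniformizer of $L$, so $\Phi$ preserves the valuation and induces $\varphi$ on residue fields.

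For the isomorphism case, when $\varphi$ is surjective, $\Phi(K)$ is a complete valued subfield of $L$ with value group $\mathbb{Z}$ and residue field $l$; since $\pi' \in \Phi(K)$ is a uniformizer of $L$, every element of $\mathcal{O}_w$ can be approximated modulo arbitrarily high powers of $\pi'$ by elements of $\Phi(K)$, and by completeness of $\Phi(K)$ we conclude $\Phi(K) = L$. I expect the principal technical obstacle to be the bookkeeping in the encoding step: matching the coefficients of the Eisenstein polynomial $f$ to the precise form demanded by Definition~\ref{def:Omega}, with the correct Teichmüller representatives and the correct exponents $p^{de-1}$ and arities, so that the tuple produced in $k$ genuinely lies in $\rmOmega_{d,e,m}(K,v)$. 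Once this matching is in place, the rest of the proof is the transfer of the Eisenstein polynomial along $\Phi_0$ combined with the Hensel-type lifting of Proposition~\ref{prop:eisenstein-root-approx}.
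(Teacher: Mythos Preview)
Your proof is correct and follows essentially the same route as the paper: encode a uniformizer's Eisenstein polynomial over a Cohen subring into $\Omega$ via Lemma~\ref{lem:pick-p-lift-to-expand}, push it through $\varphi$, lift $\varphi$ to the Cohen subring by Lemma~\ref{muensterlemma} with the specified lifts of the $p$-independent tuple, and then apply Proposition~\ref{prop:eisenstein-root-approx} to obtain a root of the transferred polynomial in $L$. The minor deviations (speaking of ``the maximal unramified subfield'' rather than citing Cohen's theorem for the existence of $\mathcal{O}_0$, checking directly that $\Phi$ preserves the valuation rather than invoking Remark~\ref{Robdef}, and the density argument for the isomorphism case instead of the paper's degree count $[L:\Phi(K_0)]=e=[K:K_0]$) are purely cosmetic.
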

\begin{proof}
Let $\varphi \colon k \rightarrow l$ be an $\Ldagger{e}$-homomorphism.
By \cite[Theorem 11]{Co},
there exists a subring
$\mathcal{O}_0\subseteq \mathcal{O}_{v}$
which is a complete unramified valuation ring (i.e.\ a Cohen ring) with residue field $k$.

Let $\pi \in K$ be a uniformizer of $v$.
By Lemma \ref{lem:eisenstein-generator}, the minimal polynomial of $\pi$ over $K_0$ is an Eisenstein polynomial
$F 
=X^{e}+\sum_{i<e}c_{i}X^{i}
\in \mathcal{O}_{0}[X]$.
Let $d=d(e)$ and consider
$f:=\res_{d}(F)\in (\mathcal{O}_0/p^d) [X]$.
We now show how $f$ encodes an element of $\Omega_{d,e,m}(K,v)$ for $m = M_1(p,e,de-1,d)$.
By Lemma \ref{lem:pick-p-lift-to-expand}, there exist $m' \leq M_1(p,e,de-1,d)$, $p$-independent elements $\beta_1, \dotsc, \beta_{m'} \in k$ and lifts $b_1, \dotsc, b_{m'} \in \mathcal{O}_0$ of the $\beta_i$ such that for each of the coefficients $c_i$ we have
\[ c_i \equiv \sum_{j=0}^{d-1} p^j \sum_{I \in P_{m',de-1}} b^I y_{i,j,I}^{p^{de-1}} \bmod p^d \]
with suitable $y_{i,j,I} \in \mathcal{O}_0$.
Note that $c_i \in p\mathcal{O}_0$ since $F$ is Eisenstein,
and so $y_{i,0,I} \equiv 0 \bmod p$
since the $\beta_l$ are $p$-independent,
and hence $y_{i,0,I}^{p^{de-1}} \equiv 0 \bmod p^{d}$ 
for all $i, I$.
Define $\beta_l = 0$ for $m' < l \leq m$, and $\gamma_{i,j,I} = 0$ for multi-indices $I \in P_{m, de-1} \setminus P_{m',de-1}$.
Let $\gamma_{i,j,I} \in k$ be the residue of $y_{i,j,I}$ for all $i,j,I$.
The Teichmüller map $\tau \colon k^{(p^{de-1})} \to (\mathcal{O}_v/(p^d))^{(p^{de-1})}$ sends $\gamma_{i,j,I}^{p^{de-1}}$ to
the residue of $y_{i,j,I}^{p^{de-1}}$ mod $p^d$,
see Definition~\ref{def:Teich},
and so we have $((\gamma_{i,j,I})_{ijI}, (\beta_l)_l) \in \Omega_{d,e,m}(K,v)$ by construction.

Since $\varphi$ is an
$\Ldagger{e}$-homomorphism, we have
$(\varphi(\gamma_{i,j,I}),\varphi(\beta_{l}))\in\Omega_{d,e,m}(L,w)$.
By definition of $\Omega_{d,e,m}(L,w)$, there is some 
$\underline{b'}\in\mathcal{O}_{w}^{m}$
with
$\res(\underline{b'})=\varphi(\underline{\beta})$
such that the polynomial $g \in (\mathcal{O}_w/p^d)[X]$ with coefficients determined by $\varphi(\gamma_{i,j,I})$ and
$\mathrm{res}_d((\underline{b'})^I)$ has a root in $\mathcal{O}_w/p^d$.
 Applying Lemma \ref{muensterlemma}, we obtain an embedding $\Phi_0 \colon K_0 \to L$
 of valued fields sending $b_1, \dotsc, b_{m'}$ to $b'_1, \dotsc, b'_{m'}$.

By construction, the polynomial $G = \Phi_0(F) \in \Phi_0(K_0)[X] \subseteq L[X]$ is an Eisenstein polynomial over $\Phi_0(\mathcal{O}_0)$ since $F$ is an Eisenstein polynomial over $\mathcal{O}_0$, and $G$ has residue $\mathrm{res}_d(G)=g$.
As $g$ has a root in $\mathcal{O}_w/p^d$ and $d=d(e)$, $G$ has a root in $L$
by Proposition~\ref{prop:eisenstein-root-approx}.
Thus we may extend $\Phi_0$ to an
$\mathcal{L}_{\mathrm{ring}}$-embedding
$\Phi \colon K\to L$, which still lifts $\varphi$.
By Remark \ref{Robdef}, $\Phi$ automatically respects the valuations,
and so gives an $\mathcal{L}_\mathrm{val}$-embedding.

For the final claim:
If $\varphi$ is an isomorphism, then $L$ and its unramified subfield $\Phi(K_0)$ both have the same residue field, and therefore $[L : \Phi(K_0)] = e = [K : K_0]$.
This shows that $\Phi$ must be surjective, and therefore an isomorphism of valued fields.
\end{proof}

\begin{remark}\label{rem:lee-lee}
  Since the $\Ldagger{e}$-structure on the residue field $Kv$ is completely determined by $\mathcal{O}_v/p^{d(e)}$ (Remark~\ref{rem:dagger_coincide}), the lemma yields as an immediate consequence some results related to questions studied in \cite{LeeLee}.
  For instance, we obtain that $(K,v) \cong (L,w)$ if and only if $\mathcal{O}_v/p^{d(e)} \cong \mathcal{O}_w/p^{d(e)}$ (cf.~\cite[Question 1.3]{LeeLee}).
  Crucially we do not need the hypothesis, imposed throughout \cite{LeeLee}, that the residue fields are perfect.
\end{remark}

\begin{remark}\label{rem:simplified-lang}
In the remainder of this article, we will use Lemma~\ref{lem:emb-imp} as a black box.
We will also not use the precise definition of the $\Ldagger{e}$-structure on the residue field again, as it was only used in order to prove the lemma.

  We note some settings in which the $\Ldagger{e}$-structure on the residue field can be significantly simplified.
  As a consequence, all of our main theorems remain true in these settings with the simplified structure, since they only depend on Lemma~\ref{lem:emb-imp}.
  The key idea for each of these simplifications is that the $\Ldagger{e}$-structure is needed in the proof of Lemma~\ref{lem:emb-imp} to capture the minimal polynomial $F$ of a uniformizer $\pi$ of $(K,v)$ over a Cohen subring, or rather a suitable reduction of $F$.
  Under suitable hypotheses, we obtain restrictions on the shape of $F$, at least after a wise choice of uniformizer.
    \begin{enumerate}
    \item If $(K,v)$ is unramified (i.e. $e=1$ and thus $d(e)=1$)),
      then we simply have $\Omega_{1,1,0}(K,v) = (Kv)^0$.
      In particular, we can dispense with $\Ldagger{e}$ and use $\Lring$ instead.
    \item If the initial ramification $e$ under consideration is coprime to $p$, so that we only consider tamely ramified fields, then the uniformizer $\pi$ in the proof of Lemma~\ref{lem:emb-imp} can always be chosen to have minimal polynomial $F = X^e - pa$, where $a \in \mathcal{O}_0^\times$ is a unit of the Cohen subring $\mathcal{O}_0$.
      (See for instance \cite[Chapter II, §5, Proposition 12]{Lang_ANT}, where the standing hypothesis that the residue field is perfect is not necessary, or alternatively
      \cite[Kapitel II, Satz 7.7]{Neukirch} and its proof.)
      By Hensel's Lemma, any element of a henselian valued field with residue characteristic $p \nmid e$ whose residue is an $e$-th power is itself an $e$-th power.
      We can deduce from this that a polynomial $F$ of the given form has a root in $(K,v)$ if it has a root modulo $p^2$.
      It therefore follows that instead of the predicate symbol $\Omega$ of large arity, for Lemma~\ref{lem:emb-imp} it suffices to use a unary predicate symbol $\Omega^{\mathrm{tame}}$ interpreted as
      \[ \Omega^{\mathrm{tame}}_e(K,v) =
           \left\{\alpha\in Kv 
			\;\left|\;
			\begin{array}{ll}
			\exists a\in\mathcal{O}_{v}^{\times} \text{ with }
			\mathrm{res}(a)=\alpha,
			\text{ and }
			f=X^e - pa \\ \textrm{has a }
			\textrm{root in }\mathcal{O}_{v}/p^2
			\end{array}
			\right\}
                        \right.
                        .
      \]
      This set is always an $e$-th power class in the residue field $Kv$.                
    \item If we restrict to finitely ramified fields of initial ramification $e$ whose residue field has finite degree of imperfection bounded by $m \in \mathbb{N}$, then we can work with the set $\Omega_{d(e),e,m}$, as opposed to using $\Omega_{d(e),e,M_1(p,e,de-1,d)}$.
      For low values of $m$, this significantly lowers the arity of the predicate considered since the function $M_1$ grows very quickly.
    \item As a special case of the previous point, if we work only with perfect residue fields, it suffices to use a predicate for the set
      \begin{align*} 
         \rmOmega_{d(e),e,0}(K,v)&=
    \left\{(\gamma_{i,j})_{{\substack{0\leq i<e,\\0<j<d(e)}}} \in k^{e(d(e)-1)}\;
    \left|\;
    \begin{array}{ll}
    \textrm{for }
    c_{i}:=\sum_{j=1}^{d(e)-1}\tau(\gamma_{i,j}^{d(e)e-1})p^{j},
    \text{ the polynomial }\\[\medskipamount]
    f=X^e + \sum_{i=0}^{e-1}c_{i}X^{i}
    \text{ has a root in }\mathcal{O}_{v}/p^{d(e)} 
    \end{array}
    \right\}
    \right. 
    \end{align*}
    (where $\tau$ denotes the Teichmüller map from Definition~\ref{def:Teich}).
    \end{enumerate}
\end{remark}

\begin{remark}
In this remark we briefly remove our standing assumption that $p$ is fixed:
we define a language
$\mathcal{L}_{\mathrm{unif}, e}$
in which we may treat all residue characteristics $p>0$ uniformly, 
and which only depends on $e$.
Recall that the quantities $d(e)$ and $m$ in fact also depend on $p$,
so we write $d(e,p)=e(1+v_{p}(e))$
and
$m(e,p)=M_{1}(p,e,d(e,p)e-1,d(e,p))$.
In $\mathcal{L}_{\mathrm{unif}, e}$ we add a $(e(d(e,p)-1)p^{(d(e,p)e - 1)m(e,p)}+m(e,p))$-ary predicate
$\Omega_{p}$
for all prime numbers $p$ to the language $\mathcal{L}_\mathrm{ring}$. 
Then given any valued field $(K,v)$
of mixed characteristic
and initial ramification $e$,
we may expand the residue field
$k=Kv$ to an
$\mathcal{L}_{\mathrm{unif}, e}$-structure
by interpreting
$\Omega_{p}$
as the set
\begin{align*}
   \left\{\begin{array}{ll}
   \Omega_{d(e,p),e,m(e,p)}(K,v)
   &\text{when $p=\mathrm{char}(Kv)$}\\
   \emptyset
   &\text{when $p\neq\mathrm{char}(Kv)$}.
   \end{array}\right.
\end{align*}
It can be checked that
Lemma~\ref{lem:emb-imp}
goes through with $\mathcal{L}_{\mathrm{unif}, e}$
in place of $\Ldagger{e}$.
We consider this construction to have no advantage over the setting with a fixed residue characteristic $p$
and hence we do not pursue this point of view in the rest of the paper.
\end{remark}
    
Lemma \ref{lem:emb-imp} will be sufficient to prove an Ax--Kochen--Ershov principle for existential equivalence in the next section,
and (although we will take a different route below)
it could also be used to establish the corresponding principle for elementary equivalence --
for instance, one could copy the proof of the Ax--Kochen--Ershov principle for
elementary equivalence of unramified henselian valued fields \cite[Theorem 8.5 and Corollary 8.3]{AJCohen},
using our Lemma~\ref{lem:emb-imp} instead of the corresponding result \cite[Corollary 6.6]{AJCohen} in the unramified case.

To handle elementary extensions and existential closedness, however, we will need embedding lemmas over substructures. In fact, here we do not require any additional structure on the residue
field.
We start with the following self-strengthening of Lemma \ref{muensterlemma}, with proof similar to that of 
  \cite[Corollary 6.4]{AJCohen}. 

\begin{lemma}\label{muensterlemma-subfield}
  Let $(K_0, v_0)$ be an unramified $\mathbb{Z}$-valued field with residue field $k_{0}$, and $(K, v)$ an unramified extension of $(K_0, v_0)$ with residue field $k$ such that the extension $k/k_0$ is separable.
  Let $(L,w)$ be a complete $\mathbb{Z}$-valued field.
  Let $\Phi_0 \colon K_0 \hookrightarrow L$ be an embedding of valued fields.
  Let $\varphi \colon k \to l$ be a field embedding which on $k_0 \subseteq k$ agrees with the map induced by $\Phi_0$.
  Let $\underline\beta$ be a (possibly infinite) tuple of elements of $k$ which are relatively $p$-independent over $k_0$,
  $\underline b$ a lift of $\underline\beta$ in $K$, and $\underline b'$ a lift of $\varphi(\underline\beta)$ in $L$.
  Then there exists an embedding $\Phi \colon (K,v) \to (L,w)$ of valued fields extending $\Phi_0$, sending $\underline b$ to $\underline b'$, and compatible with $\varphi$.
\end{lemma}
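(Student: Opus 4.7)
The plan is to adapt the proof of Lemma~\ref{muensterlemma} to the relative setting, with the separability hypothesis on $k/k_0$ allowing us to bypass the ultraproduct argument needed in the absolute case. First I would replace $K$ by its completion (which preserves all hypotheses). Since $k/k_0$ is separable and $\underline\beta$ is $p$-independent over $k_0$, we may extend $\underline\beta$ by a tuple $\underline{\tilde\beta}$ so that the concatenation $\underline\beta \cup \underline{\tilde\beta}$ is a $p$-basis of $k$ over $k_0$. The crucial point is that any $p$-basis of a separable extension is a separating transcendence basis; hence $\underline\beta \cup \underline{\tilde\beta}$ is algebraically independent over $k_0$ and $k/k_0(\underline\beta, \underline{\tilde\beta})$ is separable algebraic. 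Choose arbitrary lifts $\underline{\tilde b}$ in $K$ and $\underline{\tilde{b'}}$ in $L$ of $\underline{\tilde\beta}$ respectively $\varphi(\underline{\tilde\beta})$.

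Next, the enlarged tuple $(\underline b, \underline{\tilde b})$ is algebraically independent over $K_0$: any integral polynomial relation, after scaling to have at least one unit coefficient, would reduce modulo the maximal ideal to a non-trivial relation on $\underline\beta \cup \underline{\tilde\beta}$ over $k_0$, contradicting algebraic independence. The analogous statement holds for $(\underline{b'}, \underline{\tilde{b'}})$ over $\Phi_0(K_0)$. We may therefore define a $K_0$-algebra homomorphism $\Phi_1 \colon K_0(\underline b, \underline{\tilde b}) \to L$ extending $\Phi_0$ and sending $(\underline b, \underline{\tilde b})$ to $(\underline{b'}, \underline{\tilde{b'}})$. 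The same residue-computation as in the proof of Lemma~\ref{muensterlemma} shows that $\Phi_1$ is in fact a valued-field embedding, and it is clearly compatible with $\varphi$ on the residue field $k_0(\underline\beta, \underline{\tilde\beta})$.

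It remains to extend $\Phi_1$ to all of $K$ in two stages. By completeness of $L$, $\Phi_1$ extends uniquely by continuity to an embedding $\Phi_2$ of the closure $\widehat{K_0(\underline b, \underline{\tilde b})} \subseteq K$ into $L$. Since $k/k_0(\underline\beta, \underline{\tilde\beta})$ is separable algebraic, the maximal algebraic subextension of $K/\widehat{K_0(\underline b, \underline{\tilde b})}$ has residue field $k$ and is therefore dense in $K$. Invoking \cite[Chapitre III, \S 5, Th\'eor\`eme 3]{Serre} (or Hensel's Lemma applied to minimal polynomials of primitive elements) we obtain a unique extension of $\Phi_2$ to each finite subextension compatible with $\varphi$, and continuity together with completeness of $L$ then yields the desired embedding $\Phi \colon K \to L$. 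The main subtlety is precisely the use of separability of $k/k_0$ to secure the algebraic independence needed in the construction of $\Phi_1$; once this is in hand, the argument is a faithful relativisation of the clean first case of Lemma~\ref{muensterlemma}, and no ultraproduct machinery is required.
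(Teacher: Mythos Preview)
Your proof contains a genuine gap: the assertion that ``any $p$-basis of a separable extension is a separating transcendence basis'' is false in general. Separability of $k/k_0$ only guarantees that every \emph{finitely generated} subextension has a separating transcendence basis; the full extension need not be separably generated. A concrete counterexample within the scope of the lemma is $k_0 = \mathbb{F}_p$ and $k = \mathbb{F}_p(t)^{\mathrm{perf}}$: the extension $k/k_0$ is separable (as $k_0$ is perfect), yet $k$ is perfect, so the relative $p$-basis is empty while $k/k_0$ has transcendence degree~$1$. Thus $k/k_0(\underline\beta, \underline{\tilde\beta}) = k/k_0$ is not algebraic, and your Hensel--Serre step cannot reach all of~$K$: you would recover only the closure of $K_0$, whereas $K$ is the fraction field of a Cohen ring with strictly larger residue field. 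Your argument is valid precisely when $k/k_0$ is separably generated, which is the relative analogue of the ``first case'' in the proof of Lemma~\ref{muensterlemma} --- but this is strictly stronger than the separability hypothesis of the lemma. (The algebraic-independence claim for $\underline\beta \cup \underline{\tilde\beta}$ over $k_0$ is in fact correct, but that alone does not suffice.)

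The paper proceeds by a different route that avoids re-proving the hard case: it chooses a $p$-basis $\underline{\beta}_0$ of $k_0$ with lifts $\underline{b}_0$ in $K_0$, observes that $\underline{\beta}_0 \cup \underline{\beta}$ is $p$-independent in $k$, and applies Lemma~\ref{muensterlemma} (the absolute case, already established with its ultraproduct machinery) to obtain an embedding $\Phi \colon K \to L$ sending $\underline{b}_0 \cup \underline{b}$ to $\Phi_0(\underline{b}_0) \cup \underline{b}'$ and compatible with $\varphi$. The remaining work is a uniqueness argument showing that $\Phi|_{K_0} = \Phi_0$, using that both maps agree on the lifts $\underline{b}_0$ of a $p$-basis of $k_0$ and invoking the uniqueness assertion of \cite[Theorem~6.2]{AJCohen}. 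In effect the ultraproduct is not bypassed but reused as a black box via Lemma~\ref{muensterlemma}.
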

\begin{proof}
We denote by $\varphi_{0}:k_{0}\hookrightarrow l$ the embedding of residue fields induced by $\Phi_{0}$.
Let $\underline{\beta}_0$ be a $p$-basis of $k_{0}$.
Note that $\underline{\beta}_{0}\cup\underline{\beta}$ is $p$-independent in $k$.
Let $\underline{b}_{0}$ be a lift of $\underline{\beta}_{0}$ in $K_{0}$.
Then $\underline{b}_{0}\cup\underline{b}$ is a lift of $\underline{\beta}_{0}\cup\underline{\beta}$ in $K$.
Moreover, $\Phi_{0}(\underline{b}_{0})\cup\underline{b}'$ is a lift of $\varphi_{0}(\underline{\beta}_{0})\cup\varphi(\underline{\beta})$.
By Lemma~\ref{muensterlemma},
there exists an embedding $\Phi:(K,v)\rightarrow L$  sending $\underline{b}_{0}\cup\underline{b}$ to $\Phi_{0}(\underline{b}_{0})\cup\underline{b}'$
which is compatible with $\varphi$.

What is left to show is that $\Phi$ extends $\Phi_0$ on all of $K_0$.
Note that $\Phi_0$ and the restriction $\Phi|_{K_0}$ are both isomorphisms onto their image,
coinciding on $\underline{b_0}$.
Therefore the images of $\Phi_{0}$ and $\Phi|_{K_{0}}$ coincide by the argument of \cite[Theorem 5.1]{AJCohen} already used at the end of the proof of Lemma \ref{muensterlemma}.
By the uniqueness assertion in \cite[Theorem 6.2]{AJCohen},
we conclude that $\Phi_{0}$ and $\Phi|_{K_{0}}$ are equal.
\end{proof}
Comparing with \cite[Corollary 6.4]{AJCohen}, here the assumption that $l/\varphi(K_0v_0)$ is separable has been dropped.
On the other hand, the assumption that $k/K_0v_0$ is separable is necessary, as already Mac Lane points out in \cite[Examples I and II after Theorem 12]{Mac39b}; we will use this phenomenon later for an example of a failure of quantifier elimination in Example \ref{ex:qe-fails}.

We now deduce another embedding lemma that is at the heart of relative model completeness and relative
existential completeness.

\begin{lemma}[Embeddings over valued subfields]\label{lem:emb-rank1-subfield-silly}
  Let $(K_0, v_0), (K,v), (L,w)$ be $\mathbb{Z}$-valued fields, each of initial ramification $e$, where $K_0$ is a valued subfield of $K$.
  Let furthermore an embedding $\Phi_0 \colon K_0 \hookrightarrow L$ of valued fields be given, with $\varphi_0 \colon K_0v_0 \hookrightarrow Lw$ the induced residue map.
  Assume that $(L,w)$ is complete and $Kv/K_0v_0$ is separable.
  Then every $\mathcal{L}_{\mathrm{ring}}$-embedding $\varphi \colon Kv\to Lw$ extending $\varphi_0$ is induced by an $\mathcal{L}_{\mathrm{val}}$-embedding $\Phi \colon K \to L$ extending $\Phi_0$.
  If $K$ is complete and $\varphi$ is an isomorphism, then so is $\Phi$.
\end{lemma}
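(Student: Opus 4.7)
My plan is to reduce to the case where both $K_0$ and $K$ are complete, identify compatible maximal unramified subfields $K_0^u \subseteq K^u$ using the separability of the residue field extension, apply Lemma~\ref{muensterlemma-subfield} on the unramified parts, and then extend across a common Eisenstein generator by sending a uniformizer of $v_0$ to its image under $\Phi_0$.

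Since $L$ is complete, $\Phi_0$ extends uniquely to a valued field embedding of the completion of $K_0$, and any $\mathcal{L}_\mathrm{val}$-embedding of the completion of $K$ into $L$ restricts to the desired map $K \to L$; the residue field is unchanged by completion, so I may and will assume both $K_0$ and $K$ are complete. I fix a Cohen subring $\mathcal{O}_0^u \subseteq \mathcal{O}_{v_0}$ using \cite[Theorem 11]{Co} and set $K_0^u = \mathrm{Frac}(\mathcal{O}_0^u)$. To produce a compatible Cohen subring inside $K$, I use that $K$ is henselian together with the separability of $Kv/K_0v_0$: every finite separable subextension of $Kv/K_0v_0$ lifts uniquely to a finite unramified extension of $K_0^u$ inside $K$, and the closure in $K$ of the compositum of all such lifts is a complete unramified subfield of $K$ with residue field equal to $Kv$. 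I denote this closure by $K^u$ and its valuation ring by $\mathcal{O}^u$.

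Now pick a uniformizer $\pi$ of $v_0$, which is automatically a uniformizer of $v$. By Lemma~\ref{lem:eisenstein-generator}, its minimal polynomial $F \in \mathcal{O}_0^u[X]$ over $K_0^u$ is Eisenstein of degree $e$, so $K_0 = K_0^u(\pi)$; since $[K : K^u] = e = \deg F$, the same $F$ is the minimal polynomial of $\pi$ over $K^u$ and $K = K^u(\pi)$. Applying Lemma~\ref{muensterlemma-subfield} to the unramified tower $K_0^u \subseteq K^u$, to the embedding $\Phi_0|_{K_0^u} : K_0^u \to L$, and to the residue field embedding $\varphi : Kv \to Lw$ (which by hypothesis agrees on $K_0 v_0$ with the residue map induced by $\Phi_0$), I obtain an $\mathcal{L}_\mathrm{val}$-embedding $\Phi^u : K^u \to L$ extending $\Phi_0|_{K_0^u}$ and inducing $\varphi$ on residue fields. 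Since $\Phi_0(\pi) \in L$ is a root of $\Phi_0(F) = \Phi^u(F)$, there is a unique ring homomorphism $\Phi : K = K^u(\pi) \to L$ extending $\Phi^u$ with $\Phi(\pi) = \Phi_0(\pi)$. By construction $\Phi$ restricts to $\Phi_0$ on $K_0 = K_0^u(\pi)$ and still induces $\varphi$ on residue fields, since passing from $K^u$ to $K$ does not enlarge the residue field; and by Remark~\ref{Robdef}, $\Phi$ is automatically an $\mathcal{L}_\mathrm{val}$-embedding because both $K$ and $L$ are finitely ramified henselian. For the final claim, if the original $K$ is complete and $\varphi$ is an isomorphism, then $\Phi(K) \subseteq L$ is a complete $\mathbb{Z}$-valued subfield with residue field $\varphi(Kv) = Lw$ and value group $\mathbb{Z}$, forcing $\Phi(K) = L$.

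The main obstacle is the construction of the compatible Cohen subring $\mathcal{O}^u \supseteq \mathcal{O}_0^u$ inside $\mathcal{O}_v$, which is precisely where the separability hypothesis on $Kv/K_0v_0$ is used; without it, there need be no unramified subfield of $K$ above $K_0^u$ with residue field as large as $Kv$, and the reduction to a common Eisenstein extension would break down. Once this is in hand, everything else is a routine application of Lemma~\ref{muensterlemma-subfield} and of the Hensel-type behavior of Eisenstein polynomials over complete rings.
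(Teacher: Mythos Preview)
Your proof is correct and follows essentially the same strategy as the paper's: reduce to the complete case, choose a Cohen subfield $K_0^u\subseteq K_0$, produce a compatible unramified subfield $K^u\subseteq K$ with residue field $Kv$, apply Lemma~\ref{muensterlemma-subfield} to embed $K^u$ into $L$ over $K_0^u$, and then extend across the totally ramified degree-$e$ layer. The only cosmetic differences are that the paper constructs the unramified field $C$ abstractly and then uses Lemma~\ref{muensterlemma-subfield} a second time (with target $K$) to realise it inside $K$, whereas you build $K^u$ directly as the closed maximal unramified subextension via Hensel lifting; and the paper phrases the final extension step as ``$K_0$ and $C$ are linearly disjoint over $C_0$, hence the two embeddings glue on the compositum $K=CK_0$'', while you make this explicit by picking a common Eisenstein generator $\pi$ and sending $\pi\mapsto\Phi_0(\pi)$.
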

\begin{proof}
  We may suppose that $(K,v)$ is complete (by replacing it with its completion), and then also that $(K_0,v_0)$ is complete.
  Let $C_0 \subseteq K_0$ be a complete unramified subfield with residue field $K_0v_0$.
  There exists a valued extension field $C/C_0$ which is complete and unramified with residue field $Kv$, and by Lemma \ref{muensterlemma-subfield} we can identify $C$ over $C_0$ with a subfield of $K$.

  We have $[K : C] = [K_0 : C_0] = e$, and so $K$ is the compositum of $K_0$ and $C$; furthermore, $K_0$ and $C$ are linearly disjoint over $C_0$.
  By Lemma \ref{muensterlemma-subfield} once more, we can extend the embedding $C_0 \hookrightarrow \Phi_0(C_0) \subseteq L$ to an embedding $C \hookrightarrow L$ respecting $\varphi$.
  Since the embeddings $C \hookrightarrow L$ and $K_0 \hookrightarrow L$ agree on $C_0$ by construction, we get an embedding defined on the compositum $K = C K_0$, which has the desired properties.
  If $\varphi$ is an isomorphism, then $\Phi(K)$ is a complete subfield of $L$ with the same residue field and absolute ramification index, hence $\Phi(K) = L$.
\end{proof}

We have now established a number of embedding lemmas for $\mathbb{Z}$-valued fields.
In order to be able to later achieve strong results related to stable embeddedness,
we now establish embedding lemmas for general finitely ramified fields.
The following one is modelled on \cite[Proposition 10.1]{AJCohen}, which handles the unramified case.
Compared to loc.\ cit., we drop a separability assumption due to our stronger statement of Lemma \ref{muensterlemma},
and also shorten the proof by applying a general embedding lemma for valued fields due to Basarab \cite{Basarab}.
\begin{lemma}\label{lem:emb-gen-subfield}
  Let $(K,v)$ and $(L,w)$ be two extensions of a valued field $(K_0, v_0)$, and suppose that all three fields are henselian and finitely ramified of the same ramification index.
  Assume that $v_0K_0$ is pure in $vK$, i.e.\ $vK/v_0K_0$ is torsion-free, and that $Kv/K_0v_0$ is separable.
  Moreover, assume that $(K_0, v_0)$ and $(K,v)$ are $\aleph_1$-saturated, and that $(L,w)$ is $\lvert K\rvert^+$-saturated.

  Then every pair of an $\Loag$-embedding $\varphi_\Gamma \colon vK \hookrightarrow wL$ over $v_0K_0$ and an $\Lring$-embedding $Kv \hookrightarrow Lw$ over $K_0v_0$ is induced by an embedding $K \hookrightarrow L$ over $K_0$.
\end{lemma}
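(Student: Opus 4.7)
The strategy is to reduce to an equicharacteristic zero embedding problem by passing to the finest coarsenings of residue characteristic zero, handle the resulting $\mathbb{Z}$-valued piece on the residue field using Lemma~\ref{lem:emb-rank1-subfield-silly}, and then lift to $K$ via Basarab's general embedding lemma.

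First I let $v^\flat, w^\flat, v_0^\flat$ denote the finest coarsenings of $v, w, v_0$ of residue characteristic zero, obtained by quotienting the respective value groups by the convex subgroup generated by $v(p)$ (resp. $w(p)$, $v_0(p)$). Then $v, w, v_0$ induce $\mathbb{Z}$-valuations $\bar v, \bar w, \bar{v_0}$ on the residue fields $Kv^\flat, Lw^\flat, K_0 v_0^\flat$, each of initial ramification $e$. The saturation hypotheses, together with the rank-$1$ discrete nature of the induced valuations, make $(K_0 v_0^\flat, \bar{v_0})$ and $(Kv^\flat, \bar v)$ complete and $(Lw^\flat, \bar w)$ sufficiently saturated (in particular complete). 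The purity of $vK/v_0K_0$ descends to purity of the coarsened value-group extensions $v^\flat K / v_0^\flat K_0$, and the separability of $Kv/K_0v_0$ is exactly what Lemma~\ref{lem:emb-rank1-subfield-silly} requires on the $\mathbb{Z}$-valued residue fields.

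Next I apply Lemma~\ref{lem:emb-rank1-subfield-silly} to the chain of $\mathbb{Z}$-valued fields $(K_0 v_0^\flat, \bar{v_0}) \subseteq (Kv^\flat, \bar v)$ and target $(Lw^\flat, \bar w)$, with base embedding $(K_0v_0^\flat, \bar{v_0}) \hookrightarrow (Lw^\flat, \bar w)$ induced by $K_0 \hookrightarrow L$ and prescribed residue map the given $\Lring$-embedding $Kv \hookrightarrow Lw$ (which extends the embedding $K_0 v_0 \hookrightarrow Lw$). The output is an $\Lval$-embedding $(Kv^\flat, \bar v) \hookrightarrow (Lw^\flat, \bar w)$ over $(K_0 v_0^\flat, \bar{v_0})$ lifting the prescribed residue map. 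At the level of $\bar v$-value groups this map is canonical ($\mathbb{Z} \hookrightarrow \mathbb{Z}$ over a common subgroup), and therefore automatically agrees with the restriction of $\varphi_\Gamma$ to the convex subgroup of $vK$ generated by $v(p)$.

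Finally I invoke Basarab's general embedding lemma \cite{Basarab}, applied in equicharacteristic zero to the coarsened triple $(K_0, v_0^\flat) \subseteq (K, v^\flat)$, $(L, w^\flat)$: the $\Loag$-embedding on the quotient value groups induced by $\varphi_\Gamma$ together with the $\Lval$-embedding of $(Kv^\flat, \bar v)$ into $(Lw^\flat, \bar w)$ built in the previous step lift to an $\Lval$-embedding $(K, v^\flat) \hookrightarrow (L, w^\flat)$ over $(K_0, v_0^\flat)$ (the required purity, separability and saturation hypotheses are inherited from ours via the coarsening). Reconstructing $v$ from $v^\flat$ together with $\bar v$, and similarly for $w$, this is an embedding $(K, v) \hookrightarrow (L, w)$ over $(K_0, v_0)$ realising $\varphi_\Gamma$ and the prescribed residue embedding. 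The main obstacle I anticipate is not any single step but the bookkeeping: verifying that the saturation hypotheses propagate with the correct cardinalities to the coarsened and $\mathbb{Z}$-valued levels, and that $\varphi_\Gamma$ decomposes compatibly across the short exact sequence of value groups so that the two halves of the construction glue to $\varphi_\Gamma$ on all of $vK$.
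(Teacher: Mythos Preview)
Your overall architecture matches the paper's exactly: pass to the finest coarsenings of residue characteristic zero, apply Lemma~\ref{lem:emb-rank1-subfield-silly} to obtain an embedding of the $\mathbb{Z}$-valued residue fields over $K_0v_0^\flat$, and then use Basarab's embedding result to lift along the equicharacteristic-zero coarsening. The paper carries this out in precisely these three stages.

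The gap is the one you yourself flag at the end, and it is genuine. Your equicharacteristic-zero step produces an embedding $(K,v^\flat)\hookrightarrow(L,w^\flat)$ inducing the \emph{quotient} $\overline{\varphi_\Gamma}\colon v^\flat K\to w^\flat L$, and on the convex subgroup $\langle v(p)\rangle\cong\mathbb{Z}$ the induced map is automatically the correct one. But these two pieces of data do not pin down the induced map $vK\to wL$: any two candidates differ by a homomorphism $vK/v_0K_0\to\mathbb{Z}$, and since $vK/v_0K_0$ is torsion-free there is no reason for this to vanish. So your final embedding may realise some $\varphi_\Gamma'$ rather than the prescribed $\varphi_\Gamma$.

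The paper's solution is to build the data fed into Basarab's theorem from $\varphi_\Gamma$ itself, not from its quotient. Concretely: use $\aleph_1$-saturation to choose compatible cross-sections $v_0K_0\to K_0^\times$ and $vK\to K^\times$, and use $|K|^+$-saturation of $L$ together with purity to produce a \emph{partial} cross-section $\varphi_\Gamma(vK)\to L^\times$ extending that of $K_0$. The cross-sections split the exact sequences
\[
1\to \mathcal{O}_{\bar v}^\times \to K^\times/(1+\mathfrak{m}_{v^\flat})\to vK\to 0
\]
(and analogously for $K_0$), so that the map $\psi\colon K^\times/(1+\mathfrak{m}_{v^\flat})\to L^\times/(1+\mathfrak{m}_{w^\flat})$ is built directly out of $\Phi_0$ on the unit part and the partial cross-section composed with $\varphi_\Gamma$ on the value-group part. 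Basarab's Theorem~2.1 then lifts this particular $\psi$, and by construction the resulting embedding induces $\varphi_\Gamma$ on all of $vK$. This cross-section bookkeeping is exactly the ``gluing'' step your plan is missing; once you insert it, your outline becomes the paper's proof.
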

\begin{proof}
  By our saturation assumption, $(K_0, v_0)$ carries a cross-section $v_0K_0 \to K_0^\times$ of the valuation \cite[Lemma 7.9]{vdD14}, and $(K,v)$ carries a cross-section extending this \cite[Lemma 7.10]{vdD14}.
  As for $(L,w)$, we cannot expect a cross-section extending the cross-section of $K_0$ since $v_0K_0$ is not necessarily pure in $wL$;
  however, we at least have a partial cross-section $\varphi_\Gamma(vK) \to L^\times$ extending the one of $K_0$:
  Indeed, this follows like in \cite[Lemma 7.10]{vdD14} from the fact that $v_0K_0$ is a direct summand of $\varphi_\Gamma(vK)$.

  Let us write $v_0^0$, $v^0$, $w^0$ for the finest proper coarsenings of the three valuations; their residue fields have characteristic $0$, and by saturation they are complete with respect to the rank-$1$ valuations $\overline{v_0}$, $\overline{v}$, $\overline{w}$ induced by the original valuations.
  
  We have an exact sequence
  \[ 1 \to \mathcal{O}_v^\times/1 + \mathfrak{m}_{v^0} \to K^\times/1 + \mathfrak{m}_{v^0} \to vK \to 0, \]
  with a splitting provided by the cross-section.
  We may identify the first term of the exact sequence with $\mathcal{O}_{\overline v}^\times$ (via the residue map $\mathcal{O}_v^\times \to \mathcal{O}_{\overline v}^\times$).
  We have analogous exact sequences for $(K_0, v_0)$ and $(L,w)$.
  Since the cross-sections of $(K_0, v_0)$ and $(K, v)$ are compatible, this means that we have compatible isomorphisms
  \[ K^\times/1+\mathfrak{m}_{v^0} \cong \mathcal{O}_{\overline{v}}^\times \times vK, \quad
    K_0^\times/1+\mathfrak{m}_{v_0^0} \cong \mathcal{O}_{\overline{v_0}}^\times \times v_0K_0.\]
  
  By Lemma \ref{lem:emb-rank1-subfield-silly}, we have an embedding $\Phi_0 \colon (Kv^0, \overline v) \hookrightarrow (Lw^0, \overline w)$ above $(K_0v_0^0, \overline{v_0})$ inducing the given embedding of residue fields $Kv \hookrightarrow Lw$.
  This $\Phi_0$ in particular induces an embedding $\mathcal{O}_{\overline v}^\times \hookrightarrow \mathcal{O}_{\overline w}^\times$ over $\mathcal{O}_{\overline{v_0}}^\times$.
  Taking the composite map $\mathcal{O}_{\overline v}^\times \hookrightarrow \mathcal{O}_{\overline w}^\times \hookrightarrow L^\times / 1 + \mathfrak{m}_{w^0}$ together with the map $vK \to \varphi_\Gamma(vK) \to L^\times / 1 + \mathfrak{m}_{w^0}$ obtained from the partial cross-section of $(L,w)$,
  we obtain an embedding \[ \psi \colon K^\times/1+\mathfrak{m}_{v^0} \cong \mathcal{O}_{\overline v}^\times \times vK \hookrightarrow L^\times/1+\mathfrak{m}_{w^0}\] over $K_0^\times/1+\mathfrak{m}_{v_0^0}$.
  
  Let $\pi \in K_0$ be the uniformizer given as the image of the smallest positive element of $v_0K_0$ under the cross-section $v_0K_0 \to K_0^\times$.
  By construction, $\psi$ preserves the class of $\pi$ in $K_0^\times/1+\mathfrak{m}_{v_0^0}$.
  Since $(Kv^0)^\times$ is generated by $\mathcal{O}_{\overline v}^\times$ and the residue class of $\pi$, it follows that $\psi$ sends \[(Kv^0)^\times \cong \mathcal{O}_{v^0}^\times/1+\mathfrak{m}_{v^0} \subseteq K^\times/1+\mathfrak{m}_{v^0}\] to \[(Lw^0)^\times \cong \mathcal{O}_{w^0}^\times/1+\mathfrak{m}_{w^0} \subseteq L^\times/1+\mathfrak{m}_{w^0} ,\]
  and in fact agrees there with $\Phi_0$, since we also have $\Phi_0(\pi) = \pi$.
  In other words, $\psi$ fits in the following diagram with exact rows, above the corresponding diagram for $K_0$:
  \[ \xymatrix{
      1 \ar[r] & (Kv^0)^\times \ar[r] \ar^{\Phi_0}[d] & K^\times/1+\mathfrak{m}_{v^0} \ar[r] \ar^\psi[d] & v^0K \ar[r] \ar@{-->}[d] & 0 \\
      1 \ar[r] & (Lw^0)^\times \ar[r] & L^\times/1+\mathfrak{m}_{w^0} \ar[r] & w^0L \ar[r] & 0
  }\]
  Here, by construction of $\psi$,
  the induced dashed arrow $v^0 K \to w^0 L$ is the map induced by $\varphi_\Gamma \colon v K \to w L$.

  By \cite[Theorem 2.1]{Basarab} (cf.\ also the presentation in \cite[Théorème 4.2]{Belair}), the embedding $\psi$ is induced by an embedding $K \hookrightarrow L$ over $K_0$.
  (The hypothesis there that $(L,w_0)$ is $\lvert K\rvert$-pseudocomplete is satisfied since $(L,w)$ is $\vert K\rvert^+$-saturated; see for instance \cite[Proof of Theorem 4.2]{Basarab}.)
  By construction, this embedding will then induce the given embeddings $vK \hookrightarrow wL$ and $Kv \hookrightarrow Lw$.
\end{proof}

From the preceding relative embedding lemma over the subfield $K_0$
we obtain the following absolute statement.
\begin{lemma}\label{lem:emb-gen-abs}
  Let $(K, v)$ and $(L,w)$ be two henselian finitely ramified valued field of the same ramification index $e$.
  Suppose $(L,w)$ is $\lvert K\rvert^+$-saturated.

  Then every pair of an $\Loag$-embedding $\varphi_\Gamma \colon vK \hookrightarrow wL$
  sending the minimal positive element of $vK$ to the minimal positive element of $wL$
  and an $\Ldagger{e}$-homomorphism $\varphi_k \colon Kv \hookrightarrow Lw$
  is induced by an embedding $(K,v) \hookrightarrow (L,w)$.
\end{lemma}
\begin{proof}
  Let $(K^\ast, v^\ast)$, $(L^\ast, w^\ast)$, $\varphi_\Gamma^\ast \colon v^\ast K^\ast \hookrightarrow w^\ast L^\ast$ and $\varphi_k^\ast \colon K^\ast v^\ast \hookrightarrow L^\ast w^\ast$
  be obtained as elementary extensions of the corresponding unstarred objects
  in such a way that $(K^\ast, v^\ast)$ and $(L^\ast, w^\ast)$ are $\aleph_1$-saturated.
  Let $v^{\ast 0}$ and $w^{\ast 0}$ be the finest proper coarsenings of $v^\ast$ and $w^\ast$.
  By the saturation, the residue fields $K^\ast v^{\ast 0}$ and $L^\ast w^{\ast 0}$ are complete with respect to
  the rank-$1$ valuations $\overline{v^\ast}$ and $\overline{w^\ast}$ induced by $v^\ast$ and $w^\ast$.
  By Lemma~\ref{lem:emb-imp}, the $\Ldagger{e}$-homomorphism $\varphi_k^\ast$
  is induced by an embedding $(K^\ast v^{\ast 0}, \overline{v^\ast}) \hookrightarrow (L^\ast w^{\ast 0}, \overline{w^\ast})$.

  Furthermore, the henselian valued fields $(K^\ast, v^{\ast 0})$ and $(L^\ast, w^{\ast 0})$ of residue characteristic $0$
  have sections $K^\ast v^{\ast 0} \hookrightarrow K^\ast$ and $L^\ast w^{\ast 0} \hookrightarrow L^\ast$ (see for instance \cite[Lemma 2.3]{AF16}).
  We may therefore see $K_0^\ast := K^\ast v^{\ast 0}$ as a subfield of both $K^\ast$ and $L^\ast$.
  The restriction of the valuations $v^\ast$ and $w^\ast$ to $K_0^\ast$ must be the valuation $v_0^\ast := \overline{v^\ast}$,
  since this is the unique finitely ramified valuation on $K_0^\ast$.

  We are now in the situation of Lemma \ref{lem:emb-gen-subfield}, except for the saturation assumptions:
  Namely, $\varphi_\Gamma^\ast \colon v^\ast K^\ast \hookrightarrow w^\ast L^\ast$ is an embedding preserving the group $v_0^\ast K_0^\ast \cong \mathbb{Z}$ by the assumption
  that $\varphi_\Gamma^\ast$ preserves the minimal positive element.
  The embedding $\varphi_k^\ast \colon K^\ast v^\ast \hookrightarrow L^\ast w^\ast$ is over $K_0^\ast v_0^\ast = K^\ast v^\ast$ by construction --
  indeed, this is how we chose the embedding $K_0^\ast v_0^\ast \hookrightarrow L^\ast w^{\ast 0}\overline{w^\ast} = L^\ast w^\ast$ in the first place.
  The extension $K_0^\ast v_0^\ast/K^\ast v^\ast$ is trivial, hence separable,
  and the group $v^\ast K^\ast/v_0^\ast K_0^\ast = v^{\ast 0} K^\ast$ is torsion-free.
  After replacing all of $(K^\ast,v^\ast)$, $(L^\ast,w^\ast)$, $(K_0^\ast, v_0^\ast)$, and the appropriate maps between them,
  by further elementary extensions,
  we can arrange for enough saturation for Lemma \ref{lem:emb-gen-subfield} to be applied.

  Let us return to the original valued fields $(K,v)$ and $(L,w)$ with maps $\varphi_\Gamma$ und $\varphi_k$.
  We have shown that there exist elementary extensions $(K^\ast, v^\ast)$, $(L^\ast, w^\ast)$ with maps $\varphi^\ast_\Gamma$ und $\varphi^\ast_k$,
  all extending the respective unstarred versions,
  and an embedding $(K^\ast, v^\ast) \hookrightarrow (L^\ast, w^\ast)$ inducing $\varphi^\ast_\Gamma$ und $\varphi^\ast_k$.
  In particular, the resulting composite embedding $(K,v) \hookrightarrow (K^\ast, v^\ast) \hookrightarrow (L^\ast, w^\ast)$
  induces on the value groups the map $vK \hookrightarrow wL \hookrightarrow w^\ast L^\ast$ and on the residue fields the map $Kv \hookrightarrow Lw \hookrightarrow L^\ast w^\ast$,
  coming from $\varphi_\Gamma$ and $\varphi_k$.

  Considering both $(K,v)$ and $(L,w)$ as $\Lval(vK, Kv)$-structures
  (where for $(L,w)$, this structure comes from $\varphi_\Gamma$ and $\varphi_k$),
  this means that $(K,v)$ can be embedded into an elementary extension of $(L,w)$ in this expanded language.
  The $\lvert K \rvert^+$-saturation of $(L,w)$ then implies that $(K,v)$ can be $\Lval(vK, Kv)$-embedded into
  $(L,w)$ itself.
  (For instance, use \cite[Theorem 8.1.7]{Hodges_longer} with the set of formulas expressing
  the quantifier-free relations of elements of $(K,v)$.)
\end{proof}

\section{Ax-Kochen--Ershov principles}
\label{sec:AKE}
In this section, we prove a range of Ax--Kochen--Ershov principles for finitely ramified fields.

\subsection{Relative model completeness and completeness}

Our embedding results from the previous section imply relative model completeness
(which was already shown independently by Ershov \cite[Theorem 4.3.4]{ErshovMult} and
Ziegler \cite[Satz V.5~I)~iii)]{ZieglerDiss}):
\begin{theorem}[Ax--Kochen--Ershov Principle 1: relative model completeness]\label{thm:RMC}
Let $(K,v)\subseteq(L,w)$ be two finitely ramified henselian valued fields
of the same initial ramification $e$.
Suppose the inclusions
$Kv \subseteq Lw$
and
$vK\subseteq wL$
are elementary in
$\mathcal{L}_{\mathrm{ring}}$
and
$\mathcal{L}_{\mathrm{oag}}$, respectively.
Then 
$(K,v)\preceq(L,w)$.
\end{theorem}
\begin{proof}
  Completely analogous to \cite[Theorem 9.2]{AJCohen}, using Lemma \ref{lem:emb-rank1-subfield-silly} instead of \cite[Corollary 6.4]{AJCohen}.
\end{proof}

We can use this to establish relative completeness, arguably the archetypal result of Ax--Kochen--Ershov type.
\begin{theorem}[Ax--Kochen--Ershov Principle 2: relative completeness]\label{thm:AKE1}
Let $(K,v)$ and $(L,w)$ be two finitely ramified henselian valued fields 
of initial ramification $e$. 
Then, we have
\begin{align*}
    \underbrace{(K,v) \equiv (L,w)}_{\text{in }\mathcal{L}_\mathrm{val}} \Longleftrightarrow \underbrace{Kv \equiv Lw}_{\text{in }\Ldagger{e}} \text{ and } \underbrace{vK \equiv wL}_{\text{in }\mathcal{L}_\mathrm{oag}}
\end{align*}
\end{theorem}

In fact, we prove the result in the following stronger version,
allowing a comparison of types of elements of the residue fields and value groups.
\begin{proposition}\label{prop:AKE-equiv-plus}
  Let $(K,v)$ and $(L,w)$ be two finitely ramified henselian valued fields 
of initial ramification $e$.
  Let $\underline a$, $\underline a'$ be tuples in the residue fields $Kv$ resp.\ $Lw$ of the same length,
  and let $\underline b$, $\underline b'$ be tuples in the value groups $vK$ and $wL$ of the same length.
  Then $$\tp_{(K,v)}^{\Lval}(\underline a, \underline b) = \tp_{(L,w)}^{\Lval}(\underline a', \underline b') \Longleftrightarrow
  \tp_{Kv}^{\Ldagger{e}}(\underline a) = \tp_{Lw}^{\Ldagger{e}}(\underline a')\textrm{ and }
  \tp_{vK}^{\Loag}(\underline b) = \tp_{wL}^{\Loag}(\underline b').$$ 
\end{proposition}
\begin{proof}
  Since $\tp_{Kv}^{\Ldagger{e}}(\underline a)$ and $\tp_{vK}^{\Loag}(\underline b)$ are
  encoded in $\tp_{(K,v)}^{\Lval}(\underline a, \underline b)$,
  and analogously for $\underline a', \underline b'$, the forward direction is clear.
  Assume conversely that $\tp_{Kv}^{\Ldagger{e}}(\underline a) = \tp_{Lw}^{\Ldagger{e}}(\underline a')$
  and $\tp_{vK}^{\Loag}(\underline b) = \tp_{wL}^{\Loag}(\underline b')$.
  Replacing $(L,w)$ by a sufficiently saturated elementary extension,
  we may suppose that we have an elementary $\Ldagger{e}$-embedding $Kv \hookrightarrow Lw$ sending $\underline a$ to $\underline a'$,
  and similarly an elementary $\Loag$-embedding $vK \hookrightarrow wL$ sending $\underline b$ to $\underline b'$.
  By Lemma~\ref{lem:emb-gen-abs}, these elementary embeddings come from an $\Lval$-embedding $(K,v) \hookrightarrow (L,w)$.
  This embedding is itself elementary by Theorem~\ref{thm:RMC}.
  The equality of $\Lval$-types follows.
\end{proof}

\begin{proof}[Proof of Theorem \ref{thm:AKE1}]
  This is just Proposition \ref{prop:AKE-equiv-plus}
  for empty tuples $\underline a$, $\underline a'$, $\underline b$ and $\underline b'$.
\end{proof}

As usual, Theorem~\ref{thm:AKE1} yields a transfer of decidability.
\begin{corollary}\label{cor:decidability_transfer}
Let $(K,v)$ be finitely ramified henselian of initial ramification $e$.
The $\mathcal{L}_\mathrm{val}$-theory of $(K,v)$ is decidable if and only if
both the $\mathcal{L}_\mathrm{oag}$-theory of $vK$ and the 
$\Ldagger{e}$-theory of $Kv$ are decidable.
\end{corollary}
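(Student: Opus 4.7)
For the forward direction, suppose the $\mathcal{L}_\mathrm{val}$-theory of $(K,v)$ is decidable. The plan is to interpret both component theories in $\mathrm{Th}_{\mathcal{L}_\mathrm{val}}(K,v)$ via a computable translation. The value group sort carries $\mathcal{L}_\mathrm{oag}$ directly, so any $\mathcal{L}_\mathrm{oag}$-sentence $\psi$ can be read as an $\mathcal{L}_\mathrm{val}$-sentence about $vK$; deciding it reduces to deciding the corresponding $\mathcal{L}_\mathrm{val}$-sentence about $(K,v)$. For the residue field, the sort $k$ already interprets $\mathcal{L}_\mathrm{ring}$, and by Lemma~\ref{lem:int-imp} the predicate $\Omega$ of $\Ldagger{e}$ is existentially definable in $(K,v)$ by a formula depending only on $e$ and $p$. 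This gives a computable translation from $\Ldagger{e}$-sentences about $Kv$ into $\mathcal{L}_\mathrm{val}$-sentences about $(K,v)$, reducing decidability of the former to decidability of the latter.

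For the backward direction, suppose both $T_1 := \mathrm{Th}_{\mathcal{L}_\mathrm{oag}}(vK)$ and $T_2 := \mathrm{Th}_{\Ldagger{e}}(Kv)$ are decidable. The plan is to exhibit a recursively enumerable and complete $\mathcal{L}_\mathrm{val}$-theory whose deductive closure equals $\mathrm{Th}_{\mathcal{L}_\mathrm{val}}(K,v)$; by Post's theorem, any such theory is decidable. Let $T_0$ be the (computable) $\mathcal{L}_\mathrm{val}$-theory axiomatizing ``henselian, finitely ramified valued fields of mixed characteristic $(0,p)$ of initial ramification $e$'', which is computable (using, e.g., Remark~\ref{Robdef} to pin down initial ramification). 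Now form the $\mathcal{L}_\mathrm{val}$-theory
\[ T := T_0 \cup \{\psi^{\mathrm{val}} : \psi \in T_1\} \cup \{\chi^{\mathrm{val}} : \chi \in T_2\}, \]
where $\psi^{\mathrm{val}}$ and $\chi^{\mathrm{val}}$ denote the translations of $\psi$ and $\chi$ into $\mathcal{L}_\mathrm{val}$ via the value group sort and (using the existential definition of $\Omega$ from Lemma~\ref{lem:int-imp}) the residue field sort, respectively.

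Since both $T_1$ and $T_2$ are decidable, they are in particular recursively enumerable, and the translations are computable, so $T$ is recursively enumerable. It is satisfied by $(K,v)$, so it is consistent. The main point is that $T$ is complete: any two models $(K',v')$ and $(L',w')$ of $T$ are finitely ramified henselian of initial ramification $e$, satisfy $v'K' \equiv wL'$ in $\mathcal{L}_\mathrm{oag}$ (both value groups model the complete theory $T_1$), and satisfy $K'v' \equiv L'w'$ in $\Ldagger{e}$ (both residue fields model the complete theory $T_2$); hence by Theorem~\ref{thm:AKE1}, $(K',v') \equiv (L',w')$ in $\mathcal{L}_\mathrm{val}$. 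Thus $T$ is a recursively enumerable and complete axiomatization of $\mathrm{Th}_{\mathcal{L}_\mathrm{val}}(K,v)$, which is therefore decidable. The only subtlety is the completeness of the translated component theories, which is clear since $T_1$ and $T_2$ are the full theories of specific structures; no further obstacle arises.
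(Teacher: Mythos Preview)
Your proof is correct and follows essentially the same approach as the paper: both directions use interpretability for the forward implication, and for the converse both assemble a recursively enumerable axiomatisation from a computable base theory together with the translated component theories, then invoke Theorem~\ref{thm:AKE1} to conclude completeness and hence decidability. Your version is slightly more explicit about the translations and about invoking Post's theorem, but there is no substantive difference.
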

\begin{proof}
  This is standard.
  The ``only if'' is immediate because both the $\Loag$-structure $vK$ and the $\Ldagger{e}$-structure $Kv$ are interpretable in the $\Lval$-structure $(K,v)$ (see \cite[Remark 4 to Theorem 5.3.2]{Hodges_longer}).
  For the converse, assume that both $vK$ and $Kv$ are decidable.
  Then the theory $T$ of finitely ramified henselian valued fields $(L,w)$ of the same initial ramification as $(K,v)$ satisfying $Lw \equiv Kv$ (in the language $\Ldagger{e}$) and $wL \equiv vK$ has an obvious computable axiomatization.
  By Theorem~\ref{thm:AKE1}, $T$ is complete, and therefore the $\Lval$-theory of $(K,v)$ is precisely the set of consequences of $T$, which is decidable by a proof calculus.
\end{proof}

We also note the following corollary in the spirit of \cite[Theorem 5.2]{LeeLee},
but without a perfectness assumption on the residue fields as imposed there.
\begin{corollary}\label{cor:equiv-leelee}
  Let $(K,v)$ and $(L,w)$ be two finitely ramified henselian valued fields of the same initial ramification $e$.
  Let $d = d(e) = e (1 + v_p(e)) \in \mathbb{N}$ as in Section~\ref{sec:eisenstein}.
  Then
  \begin{align*}
    \underbrace{(K,v) \equiv (L,w)}_{\text{in }\mathcal{L}_\mathrm{val}} \Longleftrightarrow
    \underbrace{\mathcal{O}_v/p^{d} \equiv \mathcal{O}_w/p^{d}}_{\text{in }\Lring} \text{ and }
    \underbrace{vK \equiv wL}_{\text{in }\mathcal{L}_\mathrm{oag}} .
  \end{align*}
  
\end{corollary}
\begin{proof}
  The forward direction is clear since the higher residue rings $\mathcal{O}_v/p^d$ and $\mathcal{O}_w/p^d$ are
  interpretable in $(K,v)$ and $(L,w)$, respectively, by the same formulas.
  For the backward direction, we observe that $\mathcal{O}_v/p^d$ and $\mathcal{O}_w/p^d$ being
  $\Lring$-elementarily equivalent forces $Kv$ and $Lw$ to $\Ldagger{e}$-elementarily equivalent
  (Remark~\ref{rem:dagger_coincide}).
  Therefore the statement follows from Theorem~\ref{thm:AKE1}.
\end{proof}

\subsection{Relative existential completeness and Hilbert's 10th problem}

In the following proposition, we use $\exists\textrm{-}\tp_{(K,v)}^{\Lval}(\underline a)$ to denote
the existential type of $\underline{a}$ in $(K,v)$, and write $\exists^{+}\textrm{-}\tp_{Kv}^{\Ldagger{e}}(\underline a)$ for the positive existential type.

\begin{proposition} \label{prop:etypes}
Let $(K,v)$, $(L,w)$ be finitely ramified henselian valued fields 
of the same initial ramification $e$.
Let $\underline a$, $\underline a'$ be tuples in the residue fields $Kv$ resp.\ $Lw$ of the same length.
Then $$\exists\textrm{-}\tp_{(K,v)}^{\Lval}(\underline a) \subseteq \exists\textrm{-}\tp_{(L,w)}^{\Lval}(\underline a') \Longleftrightarrow \exists^{+}\textrm{-}\tp_{Kv}^{\Ldagger{e}}(\underline a)\subseteq \exists^{+}\textrm{-}\tp_{Lw}^{\Ldagger{e}}(\underline a').$$
\end{proposition}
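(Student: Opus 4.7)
The forward implication is immediate from Lemma \ref{lem:int-imp}: each predicate $\Omega$ of $\Ldagger{e}$ is defined on the residue sort by a parameter-free existential $\Lval$-formula, so any positive existential $\Ldagger{e}$-formula $\phi(\underline x)$ translates to an existential $\Lval$-formula $\tilde\phi(\underline x)$ with the same truth value on residue-sort tuples; thus the assumed inclusion of existential $\Lval$-types forces the inclusion of positive existential $\Ldagger{e}$-types.

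For the converse, I would adapt the strategy of Proposition \ref{prop:AKE-equiv-plus-residue}, replacing residue-field isomorphisms by embeddings. By preservation of existential formulas under embeddings together with standard compactness, it suffices to construct, after passing to sufficiently saturated elementary extensions of $(K,v)$ and $(L,w)$, an $\Lval$-embedding $T\colon (K,v) \to (L,w)$ whose induced residue-field map sends $\underline a$ to $\underline a'$. Replacing both valued fields by $\kappa$-saturated elementary extensions for $\kappa > |K|$, the positive existential type inclusion is preserved, and a standard saturation--compactness argument upgrades it to an $\Ldagger{e}$-homomorphism $\sigma\colon Kv \to Lw$ with $\sigma(\underline a) = \underline a'$. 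Letting $v_0, w_0$ denote the finest coarsenings of $v, w$ of residue characteristic zero, saturation ensures as in Proposition \ref{prop:AKE-equiv-plus-residue} that $(Kv_0, \bar v)$ and $(Lw_0, \bar w)$ are complete $\mathbb{Z}$-valued fields of initial ramification $e$, and by Remark \ref{rem:dagger_coincide} their induced $\Ldagger{e}$-structures on $Kv$ and $Lw$ coincide with those coming from $(K,v)$ and $(L,w)$. Hence Lemma \ref{lem:emb-imp} applies and lifts $\sigma$ to an $\Lval$-embedding $\tau\colon (Kv_0, \bar v) \to (Lw_0, \bar w)$.

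To conclude, I would invoke the Ax--Kochen--Ershov theorem for henselian valued fields of residue characteristic zero in embedding form (for instance \cite[Corollary 5.24]{vdD14}) to lift $\tau$ to an $\Lval$-embedding $T\colon (K, v_0) \to (L, w_0)$; the required embedding of value groups $v_0 K \hookrightarrow w_0 L$ exists because $\kappa$-saturation makes $w_0 L$ a divisible ordered abelian group large enough to absorb the divisible group $v_0 K$. By Remark \ref{Robdef}, $\mathcal{O}_v$ is definable inside $\mathcal{O}_{v_0}$ from $\bar v$ on $Kv_0$ (as the preimage under the $v_0$-residue map of $\mathcal{O}_{\bar v}$), so $T$ is automatically an $\Lval$-embedding $(K,v) \to (L,w)$ inducing $\sigma$ on residue fields, whence $T$ sends $\underline a$ to $\underline a'$ as required. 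The main technical step is the application of Lemma \ref{lem:emb-imp}, which is the embedding lemma specifically tailored to convert $\Ldagger{e}$-homomorphism data on residue fields into $\Lval$-embeddings of complete $\mathbb Z$-valued fields; the completeness of the coarsenings, the existence of the value-group embedding, and the residue-characteristic-zero lift are routine consequences of saturation and classical AKE.
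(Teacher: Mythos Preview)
Your forward direction and the initial setup of the converse (saturating, obtaining the $\Ldagger{e}$-homomorphism $\sigma$, passing to the coarsenings $v_0, w_0$, and applying Lemma~\ref{lem:emb-imp} to get $\tau\colon(Kv_0,\bar v)\to(Lw_0,\bar w)$) match the paper and are correct.

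The gap is in the final lifting step. You claim that saturation makes $v_0K$ and $w_0L$ divisible, so that the required value-group embedding exists. This is false: take $(L,w)$ an elementary extension of $\mathbb{Q}_p((t))$ with its composite valuation, so that $wL\equiv\mathbb{Z}\times\mathbb{Z}$ lexicographically. For each fixed $m\in\mathbb{N}$ the sentence $\neg\exists h\,\bigl(-m\cdot(0,1)\leq(1,0)-2h\leq m\cdot(0,1)\bigr)$ holds in $\mathbb{Z}\times\mathbb{Z}$ and hence in $wL$; thus the class of $(1,0)$ in $w_0L=wL/\Delta_L$ is not divisible by $2$, so $w_0L$ is not divisible, and the same example works for $v_0K$. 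Since the hypotheses place no constraint whatsoever on value groups, there is no evident reason an embedding $v_0K\hookrightarrow w_0L$ should exist at all. A secondary issue is that $(L,w_0)$ is not interpretable in $(L,w)$ (the ring $\mathcal{O}_{w_0}=\mathcal{O}_w[1/p]$ is only $\bigvee$-definable), so the saturation needed for the equicharacteristic-zero embedding lemma is not automatic either.

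The paper circumvents all of this by abandoning the attempt to embed $(K,v)$ into $(L,w)$. Once $\tau$ gives $\exists\textrm{-}\tp_{(Kv_0,\bar v)}(\underline a)\subseteq\exists\textrm{-}\tp_{(Lw_0,\bar w)}(\underline a')$, it invokes \cite[Lemma~2.3]{AF16}: since $Kv_0$ is large (being nontrivially henselian valued) and perfect (characteristic zero), there is an existentially closed $\Lval$-embedding $(Kv_0,\bar v)\hookrightarrow(K,v)$ inducing the identity on the common residue field $Kv$. This yields $\exists\textrm{-}\tp_{(K,v)}(\underline a)=\exists\textrm{-}\tp_{(Kv_0,\bar v)}(\underline a)$, and the analogous equality for $L$ finishes the proof without ever comparing value groups.
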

\begin{proof}
By Lemma \ref{lem:int-imp}, the $\Ldagger{e}$-structure on the residue fields $Kv$ and $Lw$ is
existentially $\Lval$-definable, uniformly in the fields $(K,v)$ and $(L,w)$.
The direction from left to right is an immediate consequence.

Let us prove the converse direction.
Passing if necessarily to elementary extensions,
we may assume that $(K,v)$ is $\aleph_{1}$-saturated
and that $(L,w)$ is $|K|^{+}$-saturated.
By saturation, there is an $\Ldagger{e}$-homomorphism
$Kv \to Lw$
that carries $\underline{a}$ to $\underline{a}'$.
Let $v_0$ and $w_0$ be the finest proper coarsenings of $v$ respectively $w$,
and $\bar{v}$ and $\bar{w}$ the valuations induced on the residue fields
$Kv_0$ and $Lw_0$.
By saturation,
$(Kv_0, \bar{v})$ and $(Lw_0, \bar{w})$ are complete $\mathbb{Z}$-valued
fields of initial ramification $e$.
As these induce the same 
$\Ldagger{e}$-structure on $Kv$ and $Lw$ as $(K,v)$ and $(L,w)$ 
by Remark~\ref{rem:dagger_coincide}, the
homomorphism $Kv \to Lw$ is induced by an embedding
$Kv_0 \to Lw_0$
 by Lemma \ref{lem:emb-imp}.
This embedding shows that
$\exists\textrm{-}\tp_{(Kv_{0},\bar{v})}^{\Lval}(\underline{a})\subseteq\exists\textrm{-}\tp^{\Lval}_{(Lw_{0},\bar{w})}(\underline{a}')$.
By a standard argument
(e.g.~\cite[Lemma 2.3]{AF16}, noting that in characteristic zero every field extension is separable)
there is a section $f:Kv_{0}\rightarrow K$ of the residue map of $v_{0}$.
Then $v_{0}$ corresponds to a $f(Kv_{0})$-rational $f(Kv_{0})$-place.
Since $Kv_{0}$ is large and perfect,
$f:Kv_{0}\to K$ is an existentially closed $\Lring$-embedding,
by \cite[Theorem 17]{Kuh04}.
Therefore $K$ can be embedded into an elementary extension of the field $Kv_0$, compatibly with $f$.
By Remark~\ref{Robdef}, this embedding respects the valuations,
and so $f$ gives an existentially closed $\Lval$-embedding $(Kv_0, \bar{v}) \to (K,v)$.
Furthermore, notice that $f$ induces the identity on the common residue field $Kv_0\bar{v} = Kv$
since $f$ is a section of $v_0$.
Thus
$\exists\textrm{-}\tp_{(K,v)}^{\mathcal{L}_{\mathrm{val}}}(\underline{a})=\exists\textrm{-}\tp_{(Kv_{0},\bar{v})}^{\mathcal{L}_{\mathrm{val}}}(\underline{a})$.
The same argument also yields
$\exists\textrm{-}\tp_{(Lw_{0},\bar{w})}^{\mathcal{L}_{\mathrm{val}}}(\underline{a}')=\exists\textrm{-}\tp_{(L,w)}^{\mathcal{L}_{\mathrm{val}}}(\underline{a}')$.
\end{proof}

\begin{corollary}[Ax--Kochen--Ershov Principle 3: relative existential completeness]\label{cor:etheory}
Let $(K,v)$, $(L,w)$ be finitely ramified henselian valued fields 
of the same initial ramification $e$.
 Then:
 \[ \Th_\exists^\Lval(K,v) \subseteq \Th_\exists^\Lval(L,w) 
     \Longleftrightarrow 
     \Th_{\exists^+}^{\Ldagger{e}}(Kv) \subseteq \Th_{\exists^+}^{\Ldagger{e}}(Lw)  \]    
\end{corollary}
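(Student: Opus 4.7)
The plan is to observe that this corollary is the specialisation of Proposition~\ref{prop:etypes} to the case of empty parameter tuples. Indeed, for any $\mathcal{L}_\mathrm{val}$-structure $(M,u)$, the existential theory $\Th_\exists^\Lval(M,u)$ coincides with the existential type $\exists\textrm{-}\tp_{(M,u)}^{\Lval}(\emptyset)$ over the empty set, and analogously $\Th_{\exists^+}^{\Ldagger{e}}(Mu) = \exists^{+}\textrm{-}\tp_{Mu}^{\Ldagger{e}}(\emptyset)$ for the enriched residue field.

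Thus I would first note this translation, and then invoke Proposition~\ref{prop:etypes} directly with $\underline a$ and $\underline a'$ taken to be empty tuples. Both implications then follow with no additional work: the forward direction uses the existential $\Lval$-definability of the $\Ldagger{e}$-structure from Lemma~\ref{lem:int-imp}, and the backward direction uses the embedding lemma (Lemma~\ref{lem:emb-imp}) via the saturation and coarsening arguments already packaged in the proof of Proposition~\ref{prop:etypes}.

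Since the proof is essentially a one-line specialisation, there is no substantive obstacle; the only minor point to flag is that the positive existential (rather than merely existential) type on the residue field is what must appear on the right, because the embedding lifted in Lemma~\ref{lem:emb-imp} is an $\Ldagger{e}$-homomorphism which need not preserve negations of atomic formulae. This is already accounted for in Proposition~\ref{prop:etypes}, so nothing further is needed.
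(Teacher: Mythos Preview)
Your proposal is correct and matches the paper's own proof exactly: the paper simply states that the corollary follows immediately from Proposition~\ref{prop:etypes} by choosing empty tuples $\underline a$ and $\underline a'$. Your additional remarks about the role of Lemma~\ref{lem:int-imp}, Lemma~\ref{lem:emb-imp}, and the necessity of positive existential types are accurate context but already absorbed into Proposition~\ref{prop:etypes}.
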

\begin{proof}
This immediately follows from Proposition \ref{prop:etypes}, choosing empty tuples $\underline{a}$
and $\underline{a'}$.
\end{proof}

Note that for notational symmetry, one may replace $\Th_\exists^\Lval(K,v)$ by
$\Th_{\exists^+}^\Lval(K,v)$ (or even by $\Th_{\exists^+}^{\Lring}(K)$) in the statement of the corollary,
since these theories determine one another in a straightforward way (Remark \ref{Robdef}).
However, it is not possible to replace $\Th_{\exists^+}^{\Ldagger{e}}(Kv)$ by $\Th_{\exists}^{\Ldagger{e}}(Kv)$:

\begin{example}\label{ex:existential-no-plus}
Let $p \neq 2$, $k= \mathbb F_p^\mathrm{alg}$ and 
$l = \mathbb F_p^\mathrm{alg}(t)^\mathrm{perf}$.
We write $W[k]$, $W[l]$ and $W[l^{\mathrm{alg}}]$ for the ring of Witt vectors over $k$ resp.\ $l$ and $l^{\mathrm{alg}}$.
Consider the finitely ramified henselian valued fields
$K = \operatorname{Frac}(W[k])(\sqrt{p})$ and $L = \operatorname{Frac}(W[l])(\sqrt p)$, together with the natural valuations $v$ resp.\ $w$ given as the unique prolongations of the valuations with valuation ring $W[k]$ resp.\ $W[l]$.
Both $(K,v)$ and $(L,w)$ have initial ramification $e=2$.
Since $l^\mathrm{alg} \succ k$, we have $\operatorname{Frac}(W[l^{\mathrm{alg}}]) \succ \operatorname{Frac}(W[k])$ \cite[Theorem 7.2]{vdD14} and therefore $K \subseteq L \subseteq \operatorname{Frac}(W[l^\mathrm{alg}])(\sqrt p)$ shows $\Th_\exists(K) = \Th_\exists(L)$.
By Remark \ref{Robdef}, even the existential $\Lval$-theories of $(K,v)$ and $(L,w)$ agree.

On the other hand, the existential $\Ldagger{e}$-theories $\Th_\exists(k)$ and $\Th_\exists(l)$ are not equal:
Indeed, every quadratic Eisenstein polynomial over $W(k)$ has a root in $K$ since $K$ is the unique quadratic extension of $W(k)$, whereas this is not the case for $L/W(l)$ since $W(l)$ has more than one ramified quadratic extension.
For instance, the polynomial $X^2 - pt$ has no root in $L$.
There is an existential (but not positive existential) $\Ldagger{e}$-sentence expressing that there exists a quadratic polynomial of the form $X^2 - pa$, with $a$ a unit, which does not have a zero in the valuation ring (equivalently by Proposition \ref{prop:eisenstein-root-approx}, in the valuation ring modulo $p^{d(2)}$) -- namely, this asserts that there exists a certain tuple which does not lie in the set given by the $\Omega$-predicate.
This existential sentence holds in $l$, but not in $k$.
\end{example}

We can reformulate Proposition~\ref{prop:etypes} as follows:
\begin{corollary}\label{cor:eformulas}
  Let $e \geq 1$.
  For every existential $\Lval$-formula $\varphi(\underline x)$, where $\underline x$ is a tuple of variables of the residue field sort, there exists a positive existential $\Ldagger{e}$-formula $\tau(\underline x)$ such that the following holds:
  For every finitely ramified henselian valued field $(K,v)$ of initial ramification $e$, and every tuple of elements $\underline a$ in $Kv$, we have $(K,v) \models \varphi(\underline a)$ if and only if $Kv \models \tau(\underline a)$.
\end{corollary}
\begin{proof}
  Let $T$ be the theory of finitely ramified henselian valued fields of initial ramification $e$.
  For every positive existential $\Ldagger{e}$-formula $\tau(\underline x)$, let $\widetilde\tau(\underline x)$ be the existential $\Lval$-formula asserting that $\tau$ holds in the residue field sort, where every occurrence of the $\Omega$-predicate is replaced by a defining existential formula (Lemma \ref{lem:int-imp}).

  We wish to show that for the given formula $\varphi(\underline x)$, there exists a positive existential $\tau(\underline x)$ with $T \models \varphi \leftrightarrow \widetilde\tau$.
  By \cite[Lemma B.9.2]{TransseriesBook} it suffices to show that given two complete $T$-types $p(\underline x)$, $q(\underline x)$ with $\varphi \in p$, $\neg\varphi \in q$ there exists a positive existential $\Ldagger{e}$-formula $\tau(\underline x)$ with $\widetilde\tau \in p$, $\neg\widetilde\tau \in q$.
  This is a restatement of Proposition \ref{prop:etypes}.
\end{proof}

\begin{remark}\label{rem:characterise-dagger}
  Corollary~\ref{cor:eformulas} shows that our choice of the additional structure put on the residue field (Definitions \ref{def:Omega} and \ref{def:Ldagger}) was far from arbitrary:
  our predicate symbol $\Omega$ is interpreted by a set which is existentially $\Lval$-definable, and any other existentially definable subset of the residue field may be defined positively existentially in terms of $\Omega$.
  This property characterises the $\Ldagger{e}$-structure up to positive existential interdefinability.

  We mentioned in Remark~\ref{rem:simplified-lang} that in certain situations, all of our results hold with other languages replacing $\Ldagger{e}$.
  It follows that these alternative structures on the residue field are positively existentially interdefinable with the $\Ldagger{e}$-structure.
  For instance, for $e$ not divisible by $p$, the $\Ldagger{e}$-structure on residue fields is positively existentially interdefinable with the $\Lring \cup \{\Omega^{\mathrm{tame}}\}$-structure.
\end{remark}

We deduce from Corollary~\ref{cor:etheory} a result on existential decidability, i.e.\ the solvability of Hilbert's 10th Problem.
We state this more precisely than the analogous result for full theories, Corollary~\ref{cor:decidability_transfer}.
Recall here that for theories $T_1$ and $T_2$ in finite languages $\mathcal{L}_1$ resp.\ $\mathcal{L}_2$ we say that $T_1$ is \emph{many-one reducible} to $T_2$ if there is an effective procedure associating to each $\mathcal{L}_1$-sentence $\varphi$ an $\mathcal{L}_2$-sentence $\varphi^\ast$ such that $\varphi \in T_1$ if and only if $\varphi^\ast \in T_2$.
(See for instance \cite[Definition 1.6.8]{Soare}.)
\begin{theorem} \label{thm:H10}
Let $(K,v)$ be a finitely ramified henselian valued field of initial ramification $e$.
Then $\Th_\exists(K,v)$ and $\Th_{\exists^+}^{\Ldagger{e}}(Kv)$ are many-one reducible
to one another.
In particular, one is decidable if and only if the other is.
\end{theorem}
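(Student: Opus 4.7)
The plan is to construct effective many-one reductions in both directions, treating the easy direction as essentially syntactic and the harder direction as an effective version of Corollary~\ref{cor:eformulas}.

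First I would handle the easier direction, reducing $\Th_{\exists^+}^{\Ldagger{e}}(Kv)$ to $\Th_\exists(K,v)$. By Lemma~\ref{lem:int-imp}, the interpretation of $\Omega$ on the residue sort is defined by a fixed existential $\Lval$-formula depending only on $p$ and $e$, in any finitely ramified henselian valued field of initial ramification $e$. Given a positive existential $\Ldagger{e}$-sentence $\tau$, construct an existential $\Lval$-sentence $\widetilde\tau$ by relativising all quantifiers to the residue sort and replacing each atomic occurrence of $\Omega$ with its existential $\Lval$-definition, afterwards pulling the newly introduced existential quantifiers to the front. This translation $\tau \mapsto \widetilde\tau$ is evidently computable, and by design $\tau \in \Th_{\exists^+}^{\Ldagger{e}}(Kv)$ iff $\widetilde\tau \in \Th_\exists(K,v)$.

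For the reverse reduction, from $\Th_\exists(K,v)$ to $\Th_{\exists^+}^{\Ldagger{e}}(Kv)$, I would make Corollary~\ref{cor:eformulas} effective by a standard enumeration-and-search argument. Let $T$ be the $\Lval$-theory of finitely ramified henselian valued fields of mixed characteristic $(0,p)$ and initial ramification $e$; this has an obvious computable axiomatisation, for instance using the uniform definition of the valuation ring in Remark~\ref{Robdef}. Given an existential $\Lval$-sentence $\varphi$, enumerate the positive existential $\Ldagger{e}$-sentences as $\tau_1, \tau_2, \dotsc$ and, in parallel, search for proofs from $T$ of the biconditional $\varphi \leftrightarrow \widetilde{\tau_i}$ (using the translation from the first paragraph). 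Since $T$ is effectively axiomatised, $T \vdash \psi$ is computably enumerable, so the search is effective; since Corollary~\ref{cor:eformulas} applied to the empty tuple of variables guarantees some such $i$ exists, the search terminates. Set $\varphi^\ast := \tau_i$ for the first such $i$; by construction $\varphi \in \Th_\exists(K,v)$ iff $\varphi^\ast \in \Th_{\exists^+}^{\Ldagger{e}}(Kv)$, using Corollary~\ref{cor:etheory} (or equivalently, $T$-completeness of the existential theory modulo the positive existential theory of the residue field) to deduce the equivalence at the level of the particular field $(K,v)$.

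There is no genuine obstacle here: the "in particular" statement about decidability is immediate from the definition of many-one reducibility. The only points that deserve verification are that $T$ admits a computable axiomatisation (routine given Remark~\ref{Robdef} and the decidability of the definition of $\Ldagger{e}$) and that Corollary~\ref{cor:eformulas} applies to sentences, i.e.\ to the case of an empty tuple of free variables, which is built into the statement.
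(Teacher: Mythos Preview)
Your proposal is correct and follows essentially the same approach as the paper: the easy direction via the existential interpretability of the $\Ldagger{e}$-structure (Lemma~\ref{lem:int-imp}), and the hard direction by an effective search for the $\psi$ promised by Corollary~\ref{cor:eformulas}, using a computable axiomatisation of the theory of finitely ramified henselian valued fields of initial ramification $e$. The appeal to Corollary~\ref{cor:etheory} in your last step is superfluous---once $T \vdash \varphi \leftrightarrow \widetilde{\tau_i}$ and $(K,v) \models T$, the equivalence at $(K,v)$ is immediate---but harmless.
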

\begin{proof}
The many-one reducibility of $\Th_{\exists^+}^{\Ldagger{e}}(Kv)$ to $\Th_\exists(K,v)$ follows from the fact (Lemma~\ref{lem:int-imp}) that the $\Ldagger{e}$-structure $Kv$ is existentially interpretable in $(K,v)$ (see for instance \cite[Remark 4 to Theorem 5.3.2]{Hodges_longer}.

For the converse direction, let $e$ be the initial ramification index of $(K,v)$, and let $\varphi$ be an existential $\Lval$-sentence.
By Corollary~\ref{cor:eformulas} there exists a positive existential $\Ldagger{e}$-sentence $\psi$ such that for any finitely ramified henselian valued field $(L,w)$ 
 with initial ramification $e$ we have $(L,w) \models \varphi$ if and only if $Lw \models \psi$.
Since the theory of all such $(L,w)$ has a natural computable axiomatization, an exhaustive search using a proof calculus means that in fact such $\psi$ can be effectively found given $\varphi$.
The mapping assigning to each $\varphi$ a suitable $\psi$ is the required reduction.
\end{proof}

By Remark~\ref{Robdef}, we can replace the existential $\Lval$-theory of $(K,v)$ by the existential $\Lring$-theory of $K$ in the theorem above.
On the other hand, it is essential to use the existential $\Ldagger{e}$-theory of the residue field instead of the existential $\Lring$-theory, as the following example shows.
\begin{example} \label{ex:hidex}
  By \cite[Theorem 1.1]{Ditt22}, there is a complete $\mathbb{Z}$-valued field $(K,v)$ such that the existential $\Lring$-theory of $Kv$ is decidable and the algebraic part $K\cap \mathbb{Q}^\mathrm{alg}$ is decidable, but such that the 
existential $\mathcal{L}_\mathrm{val}$-theory of $(K,v)$ is not decidable.
\end{example}

Finally, we give a consequence of our Ax--Kochen--Ershov principle (Corollary \ref{cor:etheory})
in terms of higher residue rings.
\begin{corollary}\label{cor:ex-equiv-leelee}
  Let $(K,v)$ and $(L,w)$ be two finitely ramified henselian valued fields of the same initial ramification $e$.
  Let $d = d(e) = e (1 + v_p(e)) \in \mathbb{N}$ as in Section~\ref{sec:eisenstein}.
  Then
  \begin{align*}
    \underbrace{(K,v) \equiv_\exists (L,w)}_{\text{in }\mathcal{L}_\mathrm{val}} \Longleftrightarrow
    \underbrace{\mathcal{O}_v/p^{d} \equiv_{\exists^+} \mathcal{O}_w/p^{d}}_{\text{in }\Lring}
  \end{align*}
  
\end{corollary}
\begin{proof}
  This follows from Corollary \ref{cor:etheory} much like Corollary \ref{cor:equiv-leelee} was deduced from Theorem \ref{thm:AKE1}.
  Indeed, $\mathcal{O}_v/p^d$ is quantifier-freely interpretable in $(K,v)$, so $\Th_\exists(K,v)$ controls $\Th_{\exists^+}(\mathcal{O}_v/p^d)$,
  and analogously for $(L,w)$.
  This yields the forward direction.
  For the backward direction,
  the $\Ldagger{e}$-structure $Kv$ is positively existentially interpretable in $\mathcal{O}_v/p^d$:
  indeed, the ring $Kv$ is $\mathcal{O}_v/p^d$ modulo its maximal ideal, which is the set of elements $x \in \mathcal{O}_v/p^d$
  whose $ed$-th power vanishes,
  and it is clear from the definition of the predicates $\Omega_{d,e,m}$ on $Kv$ that
  they are positively existentially definable in $\mathcal{O}_v/p^d$
  (cf.\ Remark \ref{rem:dagger_coincide} and Lemma \ref{lem:int-imp}).
  Since the analogous statements hold for $(L,w)$,
  the condition $\mathcal{O}_v/p^d \equiv_{\exists^+} \mathcal{O}_w^d$ implies that $Kv \equiv_{\exists+} Lw$ as $\Ldagger{e}$-structures,
  and so the statement follows from Corollary \ref{cor:etheory}.
\end{proof}

\subsection{Existential closedness}

We now come to our Ax--Kochen--Ershov principle for existential closedness.

\begin{theorem}[Ax--Kochen--Ershov Principle 4: existential closedness]\label{thm:exemb}
Let $(K,v)\subseteq(L,w)$ be two finitely ramified henselian valued fields
of the same initial ramification.
Suppose the inclusions
$Kv\subseteq Lw$
and
$vK\subseteq wL$
are existentially closed in
$\mathcal{L}_{\mathrm{ring}}$
and
$\mathcal{L}_{\mathrm{oag}}$, respectively.
Then 
$(K,v)\preceq_\exists (L,w)$.
\end{theorem}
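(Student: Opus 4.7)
The plan is to establish $(K,v) \preceq_\exists (L,w)$ by constructing, for some elementary extension $(K^*, v^*) \succeq (K,v)$, an $\Lval$-embedding $(L,w) \hookrightarrow (K^*, v^*)$ fixing $K$ pointwise; this suffices by a routine compactness argument. The engine will be Lemma~\ref{lem:emb-gen-subfield}, applied with $(K_0, v_0) = (K, v)$, with $(L, w)$ playing the role of the lemma's ``$(K, v)$'', and with $(K^*, v^*)$ playing the role of its ``$(L, w)$''.

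First I would observe that the hypotheses of Theorem~\ref{thm:exemb} already supply the algebraic inputs to Lemma~\ref{lem:emb-gen-subfield}. The hypothesis $vK \preceq_\exists wL$ in $\Loag$ forces $wL/vK$ to be pure: if $\gamma \in wL$ satisfies $n\gamma \in vK$, then the existential formula $\exists x\,(nx = n\gamma)$ has a solution in $wL$, hence a solution in $vK$, and by torsion-freeness of ordered abelian groups that solution must coincide with $\gamma$. Analogously, $Kv \preceq_\exists Lw$ forces $Lw/Kv$ to be separable: a putative inseparable element $\alpha \in Lw$ with $\alpha^p \in Kv$ would make $\exists x\,(x^p = \alpha^p)$ an existential sentence whose solution in $Kv$ would have to equal $\alpha$.

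Second, I would arrange the necessary saturation. Replacing $(K, v)$ and $(L, w)$ by simultaneous ultrapowers by a countably incomplete ultrafilter, I may assume both are $\aleph_1$-saturated; this preserves the inclusion as well as the relations $Kv \preceq_\exists Lw$ and $vK \preceq_\exists wL$. I then choose $(K^*, v^*)$ to be a $\lvert L \rvert^+$-saturated elementary extension of $(K, v)$. By the standard model-theoretic fact that an existentially closed substructure embeds into any sufficiently saturated elementary extension of itself, I obtain an $\Lring$-embedding $Lw \hookrightarrow K^*v^*$ over $Kv$ and an $\Loag$-embedding $wL \hookrightarrow v^*K^*$ over $vK$.

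Lemma~\ref{lem:emb-gen-subfield} now produces the desired $\Lval$-embedding $(L, w) \hookrightarrow (K^*, v^*)$ extending the identity on $K$ and inducing the chosen maps on residue field and value group, completing the proof. The main obstacle I foresee is the book-keeping around saturation: one must simultaneously enlarge $(K, v)$ and $(L, w)$ while maintaining the inclusion together with the existential-closedness hypotheses on the residue field and value group sorts, and then pick $(K^*, v^*)$ large enough to receive the embeddings required as inputs to Lemma~\ref{lem:emb-gen-subfield}. The simultaneous ultrapower construction handles this uniformly, so the real content of the argument rests entirely inside Lemma~\ref{lem:emb-gen-subfield} itself.
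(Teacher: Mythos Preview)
Your proposal is correct and mirrors the paper's proof almost exactly: take simultaneous ultrapowers for $\aleph_1$-saturation, choose an $\lvert L\rvert^+$-saturated elementary extension $(K^*,v^*)\succeq(K,v)$, embed $Lw$ into $K^*v^*$ and $wL$ into $v^*K^*$ using existential closedness, and invoke Lemma~\ref{lem:emb-gen-subfield}. One minor caveat: your argument for separability of $Lw/Kv$ only establishes $Lw\cap (Kv)^{1/p}=Kv$, which in general is strictly weaker than separability; the clean fix is to observe that $Kv\preceq_\exists Lw$ preserves $p$-independence of arbitrary finite tuples from $Kv$ (the same idea, applied with several variables at once), which is Mac Lane's criterion for separability.
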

\begin{proof}
This is analogous to \cite[Theorem 10.2]{AJCohen}:
taking ultrapowers if necessary, we may assume that both $(K,v)$ and $(L,w)$ are $\aleph_{1}$-saturated. Consider an $|L|^{+}$-saturated elementary extension 
$(K^{*},v^{*})$ 
of $(K,v)$.
As $Kv\preceq_{\exists}Lw$, there is an embedding
$\varphi_{k}:Lw\hookrightarrow K^{*}v^{*}$ over $Kv$.
Furthermore, $Kv \preceq_\exists Lw$ also ensures that a $p$-independent subset of $Kv$
remains $p$-independent in $Lw$,
so $Lw/Kv$ is separable.
Since $vK\preceq_{\exists}wL$,
there is an embedding $\varphi_{\Gamma}:wL\hookrightarrow v^{*}K^{*}$
over $vK$.
Note also that $wL/vK$ is torsion-free.
Thus, Lemma~\ref{lem:emb-gen-subfield} gives an embedding
$(L,w)\hookrightarrow(K^{*},v^{*})$ over $K$ inducing $\varphi_{k}$ and $\varphi_{\Gamma}$.
We conclude $(K,v)\preceq_{\exists}(L,w)$.
\end{proof}

  Note that \cite[Theorem 10.2]{AJCohen} assumes that $(K,v)$ and $(L,w)$ have the same finite
  degree of imperfection.
  In Theorem \ref{thm:exemb}, there is no need to restrict to finite degree of imperfection,
  because we have one fewer separability hypothesis in our embedding lemma. Thus, 
  we have a generalization of \cite[Theorem 10.2]{AJCohen} even in the unramified case.

\section{Induced structure on residue field and value group}
\label{sec:SE}
  
In this section we show that in a finitely ramified henselian valued field
of initial ramification $e$ considered in the language $\mathcal{L}_\mathrm{val}$, 
the structure induced on $Kv$ is exactly the $\Ldagger{e}$-structure,
and the structure induced on $vK$ is exactly the $\Loag$-structure.
Moreover, we show that $Kv$ and $vK$ are stably embedded and orthogonal, and
that the $\Ldagger{e}$-structure is $\Lring$-definable (with parameters) on $Kv$.

These results would follow from a relative quantifier elimination. We now give an example to show that
such a relative quantifier elimination fails, even in the unramified setting:
\begin{example}\label{ex:qe-fails}
    Consider $p = 2$.
    Let $(K,v)$ be a complete $\mathbb{Z}$-valued unramified henselian field with residue field $\mathbb{F}_2(s,t)$ where $s$ and $t$ are algebraically independent. Let $b_s$ and $b_t$ be
    lifts of $s$ and $t$ in $K$. We now consider two degree $4$ extensions of $K$ (each with the
    unique extension of $v$):
    $F:= K(\sqrt{b_s + b_t},\sqrt{b_s})$ and $L:=K(\sqrt{b_s}, \sqrt{b_t})$.
    Note that $L$ does not contain a square root of $b_s + b_t$:
    indeed, the quadratic subextensions of $L/K$ are precisely $K(\sqrt{b_s})$, $K(\sqrt{b_t})$ and $K(\sqrt{b_sb_t})$, and none of them has residue field $\mathbb{F}_2(s, t, \sqrt{s+t})$, as would be required for the field $K(\sqrt{b_s+b_t})$.
    (This is related to \cite[p.~435, Example I]{Mac39b}.)
    Therefore the $\Lring$-type of $b_s+b_t$ is different in $L$ and in $F$, and a fortiori the same holds for the $\Lval$-types.

    On the other hand, we have $\sqrt{s+t} = \sqrt{s} + \sqrt{t}$ since we are in characteristic $2$, and therefore there is an obvious isomorphism $Fv \cong \mathbb{F}_2(\sqrt{s+t}, \sqrt{t}) = \mathbb{F}_2(\sqrt{s}, \sqrt{t}) \cong  Lv$ fixing $Kv$.
    This implies that for any $\Lval$-formula $\varphi(x)$ without quantifiers over the valued field sort, we have $F \models \varphi(b_s+b_t)$ if and only if $L \models \varphi(b_s+b_t)$.
    The same holds if we allow $\varphi$ to involve a symbol for the cross-section $vK = \mathbb{Z} \to K$, $n \mapsto p^n$, or a symbol for the angular component map naturally defined in terms of this cross-section (see \cite[beginning of Section 5.4]{vdD14}).

    Therefore the theory of unramified henselian fields of mixed characteristic $(0,2)$, even in the language with a symbol for a cross-section or an angular component map, does not eliminate valued field quantifiers.
    In fact, since $F$ and $L$ are isomorphic (abstractly, i.e.\ not as extensions of $K$) \cite[Corollary 1 to Theorem 8]{Mac39b}, even the theory $\Th(F) = \Th(L)$ with these symbols does not eliminate valued field quantifiers.
    This is in contrast to the situation in equicharacteristic zero, see \cite[Corollary 5.24]{vdD14}. 
\end{example}

We adopt the following notation, as in the proof of Corollary~\ref{cor:eformulas}:
For an $\Ldagger{e}$-formula $\varphi_1(\underline x)$, we let $\widetilde\varphi_1(\underline x)$ be the $\Lval$-formula asserting that $\varphi_1$ holds in the residue field sort, where every occurrence of the $\Omega$-predicate is replaced by a defining formula (Lemma~\ref{lem:int-imp}).
Likewise, for an $\Loag$-formula $\varphi_2(\underline x)$, we let $\widetilde\varphi_2(\underline x)$ be the natural $\Lval$-formula asserting that $\varphi_2$ holds in the value group sort.
Note that both $\widetilde\varphi_1$ and $\widetilde\varphi_2$ make sense also if $\varphi_1$ and $\varphi_2$ involve parameters.
\begin{theorem} \label{thm:SE}
    Let $(K,v)$ be a finitely ramified henselian valued field.
    Then any $\Lval(K)$-formula $\varphi(\underline x, \underline y)$, with $\underline x$ a tuple of variables of the residue field sort and $\underline y$ a tuple of variables of the value group sort, is equivalent in $K$ to a boolean combination of formulas of the form $\widetilde\varphi_1(\underline x)$ or of the form $\widetilde\varphi_2(\underline y)$, with $\varphi_1(\underline x)$ a $\Lring(Kv)$-formula and $\varphi_2(\underline y)$ a $\mathcal{L}_{\mathrm{oag}}(vK)$-formula.
\end{theorem}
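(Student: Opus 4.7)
The plan is to reduce the theorem, via a standard compactness criterion for elimination into boolean combinations (as in \cite[Lemma B.9.2]{TransseriesBook}, used also in the proof of Corollary~\ref{cor:eformulas}), to the following claim: in any sufficiently saturated $\Lval$-elementary extension $(K^*, v^*) \succeq (K, v)$, if two pairs $(\underline a_i, \underline \gamma_i)$ ($i=1,2$) of tuples in the residue and value sorts satisfy that $\underline a_1, \underline a_2$ realize the same $\Lring(Kv)$-type in $K^*v^*$ and $\underline \gamma_1, \underline \gamma_2$ realize the same $\Loag(vK)$-type in $v^*K^*$, then they realize the same $\Lval(K)$-type. Because this statement is a property of $\Th_{\Lval(K)}(K, v)$, we may harmlessly pass to $\Lval$-elementary extensions and assume that $(K, v)$ is itself $\aleph_1$-saturated.

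The heart of the argument is an embedding construction. Choose a further $\lvert K^*\rvert^+$-saturated $\Lval$-elementary extension $(K^{**}, v^{**}) \succeq (K^*, v^*)$. Using saturation of $(K^{**}, v^{**})$ together with the chain $Kv \preceq K^*v^* \preceq K^{**}v^{**}$ (induced by $\Lval$-elementarity, since $\Lring$-substructure is interpreted in $\Lval$) and the type-equality of $\underline a_1, \underline a_2$, a standard back-and-forth produces an $\Lring$-elementary embedding $\sigma \colon K^*v^* \hookrightarrow K^{**}v^{**}$ over $Kv$ with $\sigma(\underline a_1) = \underline a_2$. An analogous back-and-forth on the value group yields an $\Loag$-elementary embedding $\tau \colon v^*K^* \hookrightarrow v^{**}K^{**}$ over $vK$ with $\tau(\underline \gamma_1) = \underline \gamma_2$.

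Now apply Lemma~\ref{lem:emb-gen-subfield} to the triple $(K, v) \subseteq (K^*, v^*), (K^{**}, v^{**})$ with the pair $(\sigma, \tau)$. Its hypotheses hold: $vK$ is pure in $v^*K^*$ since $\Loag$-elementary inclusions of torsion-free ordered abelian groups yield pure subgroups (any equation $ny = \gamma$ with $\gamma \in vK$ solvable in $v^*K^*$ is, by elementarity, solvable in $vK$), and $K^*v^*/Kv$ is separable because $p$-independence is a first-order condition preserved under $\Lring$-elementary extensions. The lemma provides an $\Lval$-embedding $\iota \colon (K^*, v^*) \hookrightarrow (K^{**}, v^{**})$ over $K$ inducing $\sigma$ and $\tau$; in particular, $\iota$ sends $(\underline a_1, \underline \gamma_1)$ to $(\underline a_2, \underline \gamma_2)$.

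Finally, since $\sigma$ and $\tau$ are elementary, the relative model completeness of Theorem~\ref{thm:RMC} applied to the image $\iota(K^*) \subseteq K^{**}$ promotes $\iota$ to an $\Lval$-elementary embedding. Composed with the elementary identity inclusion $(K^*, v^*) \preceq (K^{**}, v^{**})$, this forces $(\underline a_1, \underline \gamma_1)$ and $(\underline a_2, \underline \gamma_2)$ to have the same $\Lval(K)$-type in $(K^*, v^*)$, completing the argument. The main obstacle I anticipate is bookkeeping around the saturation hypothesis on the base in Lemma~\ref{lem:emb-gen-subfield} and making sure the criterion reduction correctly lands us with $\Lring(Kv)$-types rather than $\Ldagger{e}(Kv)$-types on the residue sort; the latter is handled precisely by the fact that Lemma~\ref{lem:emb-gen-subfield} only demands an $\Lring$-embedding of residue fields over the base, as the finer $\Ldagger{e}$-information is already encoded by the valued field embedding on $K$.
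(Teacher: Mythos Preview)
Your proposal is correct and follows essentially the same route as the paper: reduce by the compactness criterion of \cite[Corollary B.9.3]{TransseriesBook} to comparing types, construct elementary embeddings of residue field and value group over the base using saturation, lift via Lemma~\ref{lem:emb-gen-subfield}, and upgrade to an elementary embedding using Theorem~\ref{thm:RMC}. The only cosmetic difference is that the paper realizes the two types in two separate elementary extensions $(K_1,v_1),(K_2,v_2)\succeq(K,v)$ rather than in a single $(K^*,v^*)$, and it leaves the purity and separability checks implicit where you spell them out.
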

\begin{proof}
  By a standard compactness argument, see \cite[Corollary B.9.3]{TransseriesBook}, it suffices to show that two complete $K$-types $p(\underline x, \underline y)$ and $q(\underline x, \underline y)$ are equal if their restriction to $\Lring(Kv)$-types in the variables $\underline x$ are equal and their restriction to $\mathcal{L}_{\mathrm{oag}}(vK)$-types in the variables $\underline y$ are equal.

  Let therefore $(K_1,v_1), (K_2,v_2)$ be elementary extensions of $(K,v)$,
  let $\underline a, \underline a'$ be tuples in $K_1v_1$ resp.~$K_2v_2$ with $\tp(\underline a/Kv) = \tp(\underline a'/Kv)$,
  and let $\underline b, \underline b'$ be tuples in $v_1K_1$ resp.~$v_2K_2$ with $\tp(\underline b/vK) = \tp(\underline b'/vK)$.
  We wish to show that $\tp(\underline a, \underline b/K) = \tp(\underline a', \underline b'/K)$.

  By passing to ultrapowers of $K, K_1$ and $K_2$, we may assume that $K$ and $K_1$ are $\aleph_1$-saturated.
  By replacing $(K_2,v_2)$ by a further elementary extension if necessary, we may assume that $(K_2,v_2)$ is $\lvert K_1\rvert^+$-saturated.
  By saturation, there exists an elementary embedding $\varphi_k \colon K_1v_1 \hookrightarrow K_2v_2$ over $Kv$ sending $\underline a$ to $\underline a'$;
  likewise, there exists an elementary embedding $\varphi_\Gamma \colon v_1K_1 \hookrightarrow v_2K_2$ over $vK$ sending $\underline b$ to $\underline b'$.

  By Lemma~\ref{lem:emb-gen-subfield}, there is an embedding $\varphi \colon K_1 \hookrightarrow K_2$ over $K$ which induces $\varphi_k$ and $\varphi_K$.
  By Theorem~\ref{thm:RMC}, $\varphi$ is an elementary embedding.
  In particular, $\tp(\underline a, \underline b/K) = \tp(\underline a', \underline b'/K)$, as was to be shown.
\end{proof}

\begin{remark}
  Theorem \ref{thm:SE} means that the residue field sort and value group sort are stably embedded and orthogonal.
  Note that \cite[Example 11.5]{AJCohen} claims that the residue field of a finitely ramified
  field is not stably embedded. However, what the example in \cite{AJCohen} in fact 
  shows is that there are 
  subsets of the residue field which are
  $\emptyset$-definable in $(K,v)$ but are not invariant under automorphisms of the pure field $Kv$. 
  Thus, the traces on $Kv$ of $\emptyset$-definable sets in $(K,v)$ are not necessarily $\emptyset$-definable in the $\Lring$-structure $Kv$.
  In \cite[Definition 2.1.9]{CH}, this is referred to as $Kv$ not being canonically embedded 
  (and hence not fully embedded).
\end{remark}

\begin{corollary}\label{cor:dagger-defble}
  Let $(K,v)$ be a finitely ramified henselian valued field of initial ramification $e$.
  For all $d \geq 1$, $m \geq 0$,
  the set $\Omega_{d,e,m}(K,v)$ of Definition~\ref{def:Omega} is $\Lring$-definable in $Kv$ with parameters.
  In particular, the $\Ldagger{e}$-structure on $Kv$ is $\Lring$-definable with parameters.
\end{corollary}
\begin{proof}
  Since the sets $\Omega_{d,e,m}(K,v)$ are $\Lval$-definable in $(K,v)$, this is immediate from Theorem~\ref{thm:SE}.
\end{proof}

We now discuss the structure induced on the residue field and value group
across all finitely ramified henselian valued fields of given initial ramification index.
\begin{proposition}\label{prop:induced-str}
  Let $e \geq 1$, and let $T$ be the $\Lval$-theory of finitely ramified henselian valued fields of initial ramification index $e$.
  Let $\underline x$ be a tuple of variables of the residue field sort
  and $\underline y$ a tuple of variables of the value group sort.
  Every $\Lval$-formula $\varphi(\underline x, \underline y)$ is equivalent modulo $T$ to
  a boolean combination of formulas of the form $\widetilde\varphi_1(\underline x)$ and $\widetilde\varphi_2(\underline y)$,
  where $\varphi_1$ is an $\Ldagger{e}$-formula and $\varphi_2$ an $\Loag$-formula.
\end{proposition}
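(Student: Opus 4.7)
The plan is to argue by a standard type-separation (compactness) argument, in the same spirit as the proof of Theorem~\ref{thm:SE}, and then reduce the claim to Proposition~\ref{prop:AKE-equiv-plus-residue}. By the separation criterion (see, e.g., \cite[Corollary B.9.3]{TransseriesBook}), in order to show that every $\Lval$-formula $\varphi(\underline x)$ is equivalent modulo $T$ to a boolean combination of formulas of the form $\widetilde\varphi_1(\underline x)$ (with $\varphi_1$ an $\Ldagger{e}$-formula) and $\widetilde\varphi_2$ (with $\varphi_2$ an $\Loag$-sentence), it suffices to verify that any two complete $T$-types $p(\underline x)$, $q(\underline x)$ agree as soon as they have the same restrictions to formulas of each of these two designated shapes.

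So I would realize $p$ in a model $(K,v)\models T$ by a tuple $\underline a$ from the residue sort $Kv$, and $q$ in a model $(L,w)\models T$ by a tuple $\underline a'$ from $Lw$. The hypothesis that $p$ and $q$ assign the same truth value to every $\widetilde\varphi_1(\underline x)$ translates, via Lemma~\ref{lem:int-imp} (which provides a uniform, model-independent existential translation of the predicate $\Omega$), to the equality
\[
\tp^{\Ldagger{e}}_{Kv}(\underline a)\;=\;\tp^{\Ldagger{e}}_{Lw}(\underline a').
\]
Similarly, the assumed agreement on $\widetilde\varphi_2$ for every $\Loag$-sentence $\varphi_2$ says precisely that $vK \equiv wL$ as $\Loag$-structures.

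At this point Proposition~\ref{prop:AKE-equiv-plus-residue} applies directly and yields $\tp^{\Lval}_{(K,v)}(\underline a)=\tp^{\Lval}_{(L,w)}(\underline a')$, which is exactly $p=q$. I do not anticipate any real obstacle here: all the substantive work has already been carried out in Proposition~\ref{prop:AKE-equiv-plus-residue}, and the only mildly delicate point is checking that the translation between $\widetilde\varphi_1$-formulas on the valued field side and $\Ldagger{e}$-formulas on the residue field side is uniform across all models of $T$, which it is because the existential definition of $\Omega$ in Lemma~\ref{lem:int-imp} depends only on $e$ and $p$, not on the particular valued field.
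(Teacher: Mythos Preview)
Your argument is correct and follows essentially the same route as the paper: invoke the separation criterion \cite[Corollary B.9.3]{TransseriesBook} to reduce to equality of types, then apply Proposition~\ref{prop:AKE-equiv-plus-residue} directly. Your additional remark about the uniformity of the translation via Lemma~\ref{lem:int-imp} is a reasonable clarification but does not change the strategy.
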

\begin{proof}
  By \cite[Corollary B.9.3]{TransseriesBook} it suffices to show that two complete $T$-types
  $p(\underline x, \underline y)$, $q(\underline x, \underline y)$ are equal if their restrictions to
  $\Ldagger{e}$-types on the residue field are equal and their restriction to $\Loag$-types on the value group are equal.
  This is Proposition \ref{prop:AKE-equiv-plus}.
\end{proof}

\begin{remark}\label{rem:induced-str}
  Proposition \ref{prop:induced-str} in particular means that for every $(K,v) \models T$,
  every $\Lval$-$\emptyset$-definable subset of the residue field $Kv$ is $\Ldagger{e}$-$\emptyset$-definable in $Kv$;
  in other words, the $\Ldagger{e}$-structure $Kv$ is canonically embedded in the sense of \cite[Definition 2.1.9]{CH}.
  Analogously, the $\Loag$-structure $vK$ is canonically embedded.
\end{remark}

\section*{Acknowledgements}
The authors would like to thank the Mathematical Sciences Research Institute (now SLMath) and the organizers of the MSRI programme ``Decidability, Definability
and Computability in Number Theory'' (DDC) in which we all participated. 
We began working on the material presented here
in 2020, during the virtual part of DDC, and continued at the in-person DDC reunion
in Berkeley in 2022.
Through the MSRI programme, this work was supported by the US National Science Foundation under Grant No.\ DMS-1928930.

Franziska Jahnke was
funded by the Deutsche Forschungsgemeinschaft (DFG, German Research Foundation) under Germany's Excellence Strategy EXC 2044-390685587, Mathematics M\"unster: Dynamics-Geometry-Structure,
as well as by a Fellowship from the Daimler and Benz Foundation.
Sylvy Anscombe and Franziska Jahnke were supported by GeoMod AAPG2019 (ANR-DFG).
Philip Dittmann was funded by the Deutsche Forschungsgemeinschaft (DFG) -- 404427454.

The authors extend their special thanks to both referees for their excellent suggestions which have substantially improved the paper.

\def\bibfont{\footnotesize}
\bibliographystyle{plain}

\end{document}